\newcommand{\R}{\mathbb{R}}
\newcommand{\Z}{\mathbb{Z}}
\newcommand{\N}{\mathbb{N}}
\renewcommand{\d}{\mathrm{d}}
\date{}
\theoremstyle{plain}
\newtheorem{lem}{Lemma}[section]
\newtheorem{thm}{Theorem}[section]
\newtheorem{prop}{Proposition}[section]
\newtheorem{coro}{Corollary}[section]
\newtheorem{mydef}{Definition}[section]
\newtheorem{remark}{Remark}[section]
\numberwithin{equation}{section}
\newcommand{\ffi}{\varphi}
\newcommand{\e}{\varepsilon}
\newcommand{\dr}{\partial}
\newcommand{\tdr}{\overline{\partial}}
\newcommand{\Ll}{\mathscr{L}}
\newcommand{\T}{\mathbb{T}}
\newcommand{\GO}[1]{O\left( #1 \right)}
\renewcommand{\l}{\left\|}
\renewcommand{\r}{\right\|}
\newcommand{\half}{\frac{1}{2}}
\newcommand{\enstq}[2]{\left\{#1~\middle|~#2\right\}} 
\title{Global existence of high-frequency solutions to a semi-linear wave equation with a null structure}
\author{Arthur Touati}
\begin{document}

\maketitle

\begin{abstract}
We study the propagation of a compactly supported high-frequency wave through a semi-linear wave equation with a null structure. We prove that the self-interaction of the wave creates harmonics which remain close to the light-cone in the evolution. By defining a well-chosen ansatz, we describe precisely those harmonics. Moreover, by applying the vector field method to the equation for the remainder in the ansatz, we prove that the solution exists globally. The interaction between the dispersive decay of waves and their high-frequency behaviour is the main difficulty, and the latter is not compensated by smallness of the initial data, allowing us to consider the high-frequency limit where the wavelength tends to 0.
\end{abstract}

\tableofcontents

\section{Introduction}

\subsection{Presentation of the result}

In this article we are interested in proving the global existence of \textit{high-frequency solutions} to the following semi-linear wave system
\begin{align}
\Box \Phi = Q(\dr \Phi ,\dr \Phi) \label{main equation}
\end{align}
on $\R_+\times \R^3$ where $\Box= -\dr_t^2 + \Delta$ is the standard d'Alembertian operator and $Q$ has the null structure (see Definition \ref{def null form}).  The initial data we consider are highly oscillatory in the radial direction. This corresponds to a large data regime, which is to be compared with the well-known small data regime, see the discussion below. Let us give a rough version of the theorem proved in this article (see Theorem \ref{premier thm} for a precise version):

\begin{thm}\label{rough}
Consider sufficiently regular functions $(F_0,\ffi_0,\ffi_1)$ of size $\e$ on $\R^3$ and let $\lambda>0$. If $\e$ is small enough (independent of $\lambda$), there exists a global solution $\Phi_\lambda$ to $\Box u= Q(\dr u,\dr u)$ of the form:
\begin{equation}
    \Phi_{\lambda} = \ffi+\lambda F\cos\left(\frac{t-r}{\lambda}\right) +\lambda^2F_{\lambda}\label{ansatz vague}
\end{equation}
where
\begin{itemize}
\item $\ffi$ satisfies $\Box\ffi=Q(\dr\ffi,\dr\ffi)$ and $(\ffi,\dr_t\ffi)_{|t=0}=(\ffi_0,\ffi_1)$,
\item $F$ satisfies $\left(\dr_t+\dr_r+\frac{1}{r}\right)F=F\dr\ffi$ and $F_{|t=0}=F_0$,
\item $F_\lambda$ satisfies $|F_\lambda|\lesssim\e$.
\end{itemize}
\end{thm}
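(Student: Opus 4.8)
The plan is to construct the three ingredients of the ansatz \eqref{ansatz vague} one after the other, and then treat the equation for the remainder $F_\lambda$ by the vector field method. \textbf{Step 1 (the background $\ffi$).} Because $Q$ is a null form and $(\ffi_0,\ffi_1)$ have size $\e$, the global existence of $\ffi$ solving $\Box\ffi=Q(\dr\ffi,\dr\ffi)$ with the prescribed data, together with the decay estimates $|Z^I\dr\ffi|\lesssim\e\,(1+t+r)^{-1}(1+|t-r|)^{-1}$ for the Klainerman fields $Z\in\{\dr_\mu,\Omega_{\mu\nu},S\}$, follows from the classical small-data theory of null-form wave equations. What matters for the sequel is the stronger, null-condition-driven fact that the components of $\dr\ffi$ contracted against the null covector $\dr(t-r)$ — the \emph{good} derivatives of $\ffi$ — decay one extra power in $t+r$, hence are integrable along the outgoing null rays $s\mapsto(s,r_0+s,\omega)$.

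\textbf{Step 2 (the profile $F$).} I would substitute $F=\widetilde F/r$, which turns the equation $\left(\dr_t+\dr_r+\tfrac1r\right)F=F\,\dr\ffi$ from the statement into the transport equation $\left(\dr_t+\dr_r\right)\widetilde F=\widetilde F\,\dr\ffi$ along the integral curves of $L=\dr_t+\dr_r$, i.e. the outgoing null generators $\{t-r=\mathrm{const},\ \omega=\mathrm{const}\}$. Integrating this linear ODE writes $\widetilde F$ as its initial value times the exponential of an integral of a good derivative of $\ffi$ along the generator; by Step 1 that integral is finite and uniformly bounded, so $F$ exists for all time, stays concentrated near the light cone (it does not disperse, since $L$ is tangent to the outgoing cones, so $|t-r|$ is bounded on $\mathrm{supp}\,F$ by the data support), and obeys $|F|\lesssim\e/r$ as well as the analogous bounds for $Z^IF$ and for $\Box F$, the latter obtained by commuting the transport equation with the fields $Z$ and with the transversal derivative $\dr_t-\dr_r$.

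\textbf{Step 3 (the remainder equation and the vector field method).} Inserting \eqref{ansatz vague} into $\Box u=Q(\dr u,\dr u)$ and using that $\dr(t-r)$ is a null covector — so that $\Box\cos\!\left(\tfrac{t-r}{\lambda}\right)$ carries no $\lambda^{-2}$ term and, crucially, $Q\!\left(\dr(t-r),\dr(t-r)\right)=0$, which kills the would-be resonant self-interaction — one checks that the choices in Steps 1--2 cancel all the $O(1)$ contributions (the transport equation for $F$ is exactly what annihilates the $O(1)$ oscillatory term $\propto\sin\!\left(\tfrac{t-r}{\lambda}\right)$). There remains an equation of the schematic form $\Box F_\lambda=\mathcal N_\lambda(\ffi,F,F_\lambda)$ whose right-hand side, after one integration by parts in the phase (equivalently, after peeling off from $\lambda^2F_\lambda$ an explicit oscillatory corrector solving a further transport equation), consists of: oscillatory forcing carrying a factor $\e^2$; terms linear in $\dr F_\lambda$ with coefficients $O(\e)$; and genuinely nonlinear terms of size $O(\lambda)$. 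I would then close by local well-posedness followed by a continuity argument, running a bootstrap on a hierarchy of weighted energies $\sum_{|I|\le N}\|Z^I\dr F_\lambda\|$ under the a priori assumption that they are $\lesssim\e$: energy estimates for $\Box$, commutator estimates exploiting that $Z$ preserves the null structure and that $|t-r|$ stays bounded on the relevant supports, and the Klainerman--Sobolev inequality to turn energy into the pointwise bound $|F_\lambda|\lesssim\e$, recover the assumption with a better constant once $\e$ is small enough, uniformly in $\lambda$.

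\textbf{Main obstacle.} As the abstract emphasises, the heart of the matter is the tension between the dispersive $t^{-1}$ decay, needed to make the nonlinear and commutator terms time-integrable, and the high frequency, which a priori produces forcing of size $\lambda^{-1}$ and is \emph{not} compensated by smallness of the data. The null structure is what reconciles them: it annihilates the resonant term $Q(\dr(t-r),\dr(t-r))$, and, through the transport equations for $F$ and for the correctors (morally, integration by parts in the oscillation), it downgrades the remaining $\lambda^{-1}$ forcing to $O(\e^2)$, so that the remainder stays of size $\e$ uniformly as $\lambda\to0$.
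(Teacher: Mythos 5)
There is a genuine gap in Step 3, and it is precisely the point the paper is built around: the depth of the high-frequency expansion. Your ansatz stops at the single profile $F$ plus ``one further oscillatory corrector'', and you then run a bootstrap on $\sum_{|I|\le N}\|Z^I\dr F_\lambda\|_{L^2}\lesssim\e$ uniformly in $\lambda$. This cannot work: the corrector you yourself peel off (and more generally any term of the form $F^{(2,i)}\cos(i(t-r)/\lambda)$ hiding inside $\lambda^2F_\lambda$) loses one power of $\lambda$ per derivative, so a $\lambda$-uniform bound on \emph{all} vector-field derivatives of $F_\lambda$ is false from the start; indeed the paper only obtains $|\dr\Tilde{F}_\lambda|\lesssim\e/\lambda$. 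The correct bookkeeping is a hierarchy $\|\dr Z^I h_\lambda\|_{L^2}\lesssim\e\lambda^{K-|I|}$ for the true remainder $h_\lambda$, where $K$ is the order to which the ansatz has been corrected. The quantitative obstruction is then the Klainerman--Sobolev step: converting these $L^2$ bounds into the pointwise bounds needed to close the quadratic term $Q(\dr h_\lambda,\dr h_\lambda)$ costs three extra vector fields plus one weight, i.e.\ $|\dr Z^{J}h_\lambda|\lesssim\e\lambda^{K-4-|J|}$, so the bootstrap closes uniformly in $\lambda$ only if $K\ge 4$. With your two-term (or three-term) ansatz, $K=2$ or $3$, the pointwise size of $\dr$(remainder) carries a negative power of $\lambda$ and the nonlinear term cannot be absorbed as $\lambda\to0$; ``genuinely nonlinear terms of size $O(\lambda)$'' is exactly the estimate that fails after differentiation.

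Reaching $K\ge4$ is not a routine iteration of your Step 2: the self-interaction of the oscillating terms creates, at each even order, \emph{non-oscillating} contributions (from $\cos^2$, $\sin^2$ and from the $\T^{(p,i-j)}_\lambda$ products with $i=j$) which cannot be absorbed by transport equations and force the introduction of correctors $\psi^{(k)}$ solving genuine wave equations $\Box\psi^{(k)}=\cdots$, treated by a second ghost-weight energy argument; your outline contains no analogue of these. Moreover each transport equation at order $k+1$ is sourced by $\Box F^{(k,i)}$, which must decay like $r^{-3}$ for the $\Ll$-integration (and for the $L^2$ estimate of the forcing $\mathbf{A}_\lambda$) to go through; this is obtained not by commuting $\Box$ with $\Ll$ (which only yields $r^{-1}$, see Remark \ref{remarque 5.2}) but by the identity of Lemma \ref{d'alembertien} expressing $\Box F$ through tangential second derivatives and derivatives of $\Ll F$ — your remark about applying $\dr_t-\dr_r$ to the transport equation is in the right spirit, but the $r^{-3}$ decay requirement and its role in the hierarchy need to be made explicit. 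Steps 1 and 2 of your proposal (background solution by the null-condition theory, $F$ by integrating $(\dr_t+\dr_r)(rF)$ along outgoing rays using the integrable decay of the good derivatives of $\ffi$, support propagation in $A^R$) do match the paper's argument.
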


In the next section we will review in detail our strategy of proof,  for now let us just mention that the global existence of solutions to \eqref{main equation} on $\R^{3+1}$ with any (regular and small enough) initial data is a classical result of Klainerman (see \cite{MR837683}), which introduced the concept of null quadratic forms precisely for this purpose, and Christodoulou (see \cite{Christodoulou1986}) which used conformal compactification. Our proof relies crucially on the vector field method Klainerman introduced, but our theorem is \textit{not} a consequence of his result for the following reasons:
\begin{itemize}
\item Theorem \ref{rough} can be summarized as follows: take some oscillating initial data containing terms like $\lambda \cos\left(\frac{r}{\lambda}\right)$, then the unique global solution arrising from those initial data actually presents the same oscillating behaviour. Unlike Klainerman, we not only prove global existence but propagate the shape of the high-frequency ansatz.
\item Another crucial aspects of Theorem \ref{rough} is that the smallness threshold is independent of $\lambda$. One interesting aspects of high-frequency solutions of the form \eqref{ansatz vague} is that their $H^s$ norm is large for any $s>1$, if $\lambda$ is close to 0. Indeed, by differentiating \eqref{ansatz vague} strictly more than once we obtain negative powers of $\lambda$. Therefore, if one wants to apply Klainerman's result with a fixed but small $\lambda$ one needs to counterbalance the size of $\frac{1}{\lambda}$ with the smallness of the initial data (concretely by assuming something like $\e \ll\lambda$), see Remark \ref{limit}. Therefore, our work gives an example of how one can relax the smallness assumption of Klainerman's result.
\end{itemize}

\par\leavevmode\par
Our motivations for studying solutions of the form \eqref{ansatz vague} to non-linear wave equations like \eqref{main equation} come from Burnett's conjecture in general relativity and are presented at the end of this introduction (see Section \ref{section GR}). However, highly oscillating solution to hyperbolic equations have been intensely studied in the field of \textit{geometric optics}. In 1957, Lax laid the foundations of this field in his article \cite{MR97628}, where he proved that linear hyperbolic equations admit WKB solutions.  Named after the physicists Wentzel, Kramers and Brillouin, WKB solutions were first introduced to understand the behaviour of quantum systems in a semi-classical regime, the main idea being to write some part of the solution as an asymptotic expansion in terms of the small parameter $\hbar$. In geometric optics, the small parameter is a wavelength.

In this article, we prove global existence of a highly oscillating solutions of a \textit{non-linear} wave equation.  The construction of high-frequency ansatz for non-linear systems has been first conducted by Choquet-Bruhat in \cite{MR255964}, where she applies her result to deduce the ill-posedness of the Cauchy problem in $C^1$ for the systems she considers. In \cite{MR1073774}, Joly and Rauch extend the approximation procedure to higher space dimension. A complete overview of geometric optics can be found in the book of Rauch \cite{MR2918544}.  

In the limit $\lambda\to 0$, solutions of the form \eqref{ansatz vague} explode in any $W^{k,p}$ if $k>0$. Therefore, Theorem \ref{rough} allows us to consider large initial data for which we are still able to prove global existence. In \cite{MR3463418},  Wang and Yu also consider the large data regime by studying the \textit{short pulse} ansatz,  inspired by Christodoulou and his study of black holes formation.

\subsection{Strategy of proof}\label{subsection long time}

Let us present our strategy of proof, which can be viewed as a high-frequency adaptation of Klainerman's vector field method.  

\subsubsection{Global existence for non-linear wave equations}\label{section 121}

In spatial dimension strictly larger than 3, the standard decay of waves is enough to ensure the global existence under the small initial data assumption of solutions to wave equations of the form
\begin{align}
\Box u=(\dr u)^2.\label{wave}
\end{align}
However, since the counter example of John (see \cite{MR600571}), it is known that solutions to \eqref{wave} on $\R^{3+1}$ don't necessarily exist for all time, even in the small initial data regime.  The globality of solutions depends on the actual structure of the quadratic non-linearity. In 1984, Klainerman introduced the famous \textit{null condition} (see \cite{MR837683} and Definition \ref{def null form}).  The null condition can be summarized as follows: in every product $\dr u\dr u$ at least one of the derivatives is \textit{tangent to the light cone}, and therefore enjoys better decay. This restriction on the non-linearity in \eqref{wave} is sufficient (though not necessary) to obtain the existence of global solutions for small initial data.

\par\leavevmode\par
In order to prove global existence of a solution, one powerful method is the vector field method, which goes back to Klainerman (see \cite{MR784477}). Instead of using standard derivatives $\dr_\alpha$ to derive energy estimates, the heart of the method consists in using special vector fields linked to the symmetries of the Minkowski spacetime (see \eqref{minkowski vector fields}).  These vector fields are interesting for two reasons: they enjoy nice commutation properties with the wave operator and a more refined version of the classic Sobolev embedding $H^s\subset L^\infty$ for $s>\frac{3}{2}$ holds when replacing the standard derivatives in the $H^s$ norm by the Minkowski vector fields (the so-called Klainerman-Sobolev inequality, see Proposition \ref{prop WKNS}). This refined Sobolev embedding comes with decay rates in the null coordinates $t+r$ and $t-r$, which is the key argument to prove global existence.

\par\leavevmode\par
This method can be improved by using a \textit{weighted} energy estimate with an increasing function of $q=t-r$ as a weight. This weight implies the presence of an additional term on the LHS of the usual energy estimates (see Lemma \ref{weighted energy estimate}), allowing to better control some of the derivatives of the solution (in our case the derivatives tangent to the light cone). This is known as the ghost weight method and was first introduced by Alinhac in \cite{MR1856402}.

\subsubsection{The high-frequency hierarchy}

The main particularity of our result is the use of an high-frequency ansatz with a precise description of the oscillating terms, displaying what we call a \textit{half-chessboard shape}. Let $\Phi_\lambda$ be the solution of \eqref{main equation} admitting the following formal high-frequency expansion
\begin{align}
\Phi_\lambda(t,x) \sim \sum_{k\geq 0}\lambda^k\Phi^{(k)}\left(t,x,\frac{t-r}{\lambda}\right).\label{formal}
\end{align}
By half-chessboard shape, we mean that each $\Phi^{(i)}(t,x,\theta)$ contains several oscillating terms in the $\theta$ variable according to the following pattern:
\begin{align}
\Phi^{(0)}& = 1\nonumber
\\ \Phi^{(1)}& = \cos(\theta)\nonumber
\\ \Phi^{(2)}& = \sin(\theta) + \cos(2\theta)\label{pattern}
\\ \Phi^{(3)}& = \cos(\theta) + \sin(2\theta) + \cos(3\theta)\nonumber
\\&\quad \vdots\nonumber
\end{align}
\begin{remark}
In \eqref{pattern},  we only wrote down the oscillating terms, i.e the dependence of $\Phi^{(i)}$ on its third variable, and by $\Phi^{(0)} = 1$ we mean that it does not actually depends on the third variable, i.e it is not oscillating.
\end{remark}

The main challenge in defining a pattern of oscillation for each order is to capture the creation of harmonics due to the non-linear part of \eqref{main equation}. It is not trivial that the half-chessboard shape is "stable" under quadratic non-linearity of the form $(\dr\Phi_\lambda)^2$, i.e that those terms only create harmonics already present in the ansatz for $\Phi_\lambda$. This will be rigourously proved in Section \ref{section Construction of the ansatz}, where we present the exact form of the ansatz for $\Phi_\lambda$ (see \eqref{ansatz}).  The non-linear interactions also create non-oscillating terms which can't be absorbed by the ansatz, unless we add to \eqref{pattern} some non-oscillating fields. To lighten this introduction we choose not to show them yet.

\par\leavevmode\par

From a differential point of view, plugging oscillating terms $\lambda^k\cos\left(\frac{u}{\lambda}\right)F$ (with $u$ a phase to be chosen later) in the wave operator gives
\begin{align}
\Box\left(\lambda^k \cos\left(\frac{u}{\lambda}\right)F  \right) & = -\lambda^{k-2}\left(m^{\alpha\beta}\dr_\alpha u \dr_\beta u \right)\cos\left(\frac{u}{\lambda}\right)F  \label{intro exemple}
\\&\quad - \lambda^{k-1}\sin\left(\frac{u}{\lambda}\right)\left(2m^{\alpha\beta}\dr_\alpha u \dr_\beta + \Box u \right)F\nonumber
\\&\quad + \lambda^k\cos\left(\frac{u}{\lambda}\right)\Box F\nonumber
\end{align}
where $m^{\alpha\beta}\dr_\alpha u \dr_\beta u=-(\dr_t u)^2 + |\nabla u|^2$. We recover what is called the \textit{geometric optics approximation}: in order to have $\Box\left(\lambda^k \cos\left(\frac{u}{\lambda}\right)F  \right)$ at the same order as $\lambda^k \cos\left(\frac{u}{\lambda}\right)F$ we need 
\begin{itemize}
\item $m^{\alpha\beta}\dr_\alpha u \dr_\beta u=0$, i.e $u$ to be a solution of the eikonal equation, or said differently we need the spacetime gradient of $u$, denoted $\dr u$, to be a null vector field for the Minkowski metric.  In the rest of the article we will choose $u=t-r$.
\item $\left(2m^{\alpha\beta}\dr_\alpha u \dr_\beta + \Box u \right)F=0$, i.e $F$ to be transported along the rays of $u$. If $u=t-r$, this rewrites
\begin{align*}
\left( \dr_t + \dr_r + \frac{1}{r}\right) F =0.
\end{align*}
\end{itemize}
The fact that $u$ satisfies the eikonal equation has another consequence, related to our pattern of oscillation \eqref{pattern}. Indeed since $Q$ is a null form and $\dr u$ is a null vector the following term vanishes
\begin{align}
\left( \sin\left(\frac{u}{\lambda}\right) F^{(1)} \right)^2Q(\dr u, \dr u) \label{backreaction}
\end{align}
where $F^{(1)}$ is the coefficient of $\Phi^{(1)}$. The presence of this term would totally break the half-chessboard shape since we would need $\Phi^{(1)}$ (as well as $\Phi^{(k)}$ for $k\geq 2$ because of the triangular structure) to contain all frequencies (see Remark \ref{bad remark}).

\par\leavevmode\par

The conclusion of this discussion is that solving \eqref{main equation} is equivalent to solving a \textit{triangular} hierarchy of \textit{linear} transport equations along the rays of $u$ for each coefficients in the high-frequency expansion of $\Phi_\lambda$, as well as a semi-linear wave equation for the remainder $h$,  which is added to the formal expansion \eqref{formal} to make it an exact solution of \eqref{main equation}. Our ansatz is therefore of the following schematic form
\begin{align}
\Phi_\lambda(t,x) = \sum_{k= 0}^K\lambda^k\Phi^{(k)}\left(t,x,\frac{t-r}{\lambda}\right) + h_\lambda\label{formal 2}
\end{align}
where $h_\lambda=\GO{\lambda^K}$ solves an equation of the form
\begin{align}
\Box h_\lambda & = Q(\dr h_\lambda , \dr h_\lambda) + \lambda^K  \Phi^{(\cdot)}  \label{intro eq h}
\end{align}
where $Q$ is the null form and $\lambda^K  \Phi^{(\cdot)} $ denotes terms of orders $\lambda^K$ depending polynomially on the high-frequency waves $\Phi^{(k)}$ and their derivatives.  Note that we deliberately forgot a lot of terms in \eqref{intro eq h}, since from the point of view of the dependence in $\lambda$ the quadratic terms in $\dr h_\lambda$ are the most dangerous.

\par\leavevmode\par
In \eqref{formal 2}, the degree of precision of the expansion defining $\Phi_\lambda$ is given by the integer $K$.  Our work shows that we need $K\geq 4$ to close the bootstrap procedure for \eqref{intro eq h}. This is due to the method used to prove global existence to \eqref{intro eq h}, i.e the vector field method. Indeed, we want $h_\lambda$ to satisfy bootstrap assumptions of the form $\l \dr Z^I h_\lambda \r_{L^2}\lesssim \lambda^{K-|I|}$ (where $Z$ denotes any Minkowskian vector fields, see \eqref{minkowski vector fields}). This mimics a high-frequency behaviour while $h_\lambda$ is not oscillating. In order to improve these bootstrap assumptions, we will recover decay from the Klainerman-Sobolev inequality. This inequality allows one to bound $|f|$ by $\l Z^3 f \r_{L^2}$ with appropriate decay rates, which we omit in this introduction (see Proposition \ref{prop WKNS}). The bootstrap assumptions for $h_\lambda$ therefore imply that $|\dr h_\lambda| \lesssim \lambda^{K-4}$. This explains the lower bound on the degree of precision of the ansatz $K\geq 4$.

\par\leavevmode\par
As a final remark on our strategy of proof, let us explain how we get decay for the high-frequency waves $\Phi^{(k)}$, which appear as source terms in \eqref{intro eq h}. As explained above, the high-frequency waves solve a hierarchy of transport equation of the form
\begin{align}
\left( \dr_t + \dr_r + \frac{1}{r} \right) f = g\label{transport modèle}
\end{align}
with a RHS depending on the lower order high-frequency waves (thus respecting the triangular structure of the system).  This equation rewrites $(\dr_t + \dr_r) (rf) = rg$, from which we get $|f|\lesssim r^{-1}$ if the RHS satisfies $|g|\lesssim r^{-3}$. See Proposition \ref{equation de transport prop} for a precise statement. In the hierarchy of transport equations, the most dangerous term is $\Box\Phi^{(k)}$, i.e comes from the third line in \eqref{intro exemple}. Schematically if we only consider this term we have
\begin{align*}
\left( \dr_t + \dr_r + \frac{1}{r} \right) \Phi^{(k+1)} & = \Box \Phi^{(k)}.
\end{align*}
Therefore, in order to apply Proposition \ref{equation de transport prop} we need $\left|\Box \Phi^{(k)}\right|\lesssim r^{-3}$. We can't get this decay by commuting $\Box$ with the transport operator $\dr_t + \dr_r + \frac{1}{r}$ since it would only give $\left|\Box \Phi^{(k)}\right|\lesssim r^{-1}$. Instead we compute and estimate $\Box \Phi^{(k)}$ directly from $\dr_t + \dr_r + \frac{1}{r}$ in Lemma \ref{d'alembertien}. See Remark \ref{remarque 5.2} for a more detailled discussion of this procedure.

\subsection{A toy model for general relativity} \label{section GR}

Here we present our principal motivation for studying the global existence of high-frequency solutions to \eqref{main equation}.

\subsubsection{The Einstein vacuum equations in wave coordinates}

The Einstein equations are the main equations of the general relativity theory, they spell out the link between the curvature of the spacetime and the matter and energy it contains. On a Lorentzian manifold $(\mathcal{M},g)$ and in the vacuum case they can be written in an elegant and compact form:
\begin{equation}\label{EVE1}
\mathrm{Ric}(g)=0,
\end{equation}
where $\mathrm{Ric}(g)$ is the Ricci tensor of the metric $g$. In order to define a initial value problem for this equation, we need to specify a coordinate system on $\mathcal{M}$. Noticed by Einstein in 1916 and used by Choquet-Bruhat to show local existence of the Einstein system in the 1950's, the \textit{wave coordinates} define the most common coordinate system used in the study of the Einstein equations. In this coordinate system, the equation \eqref{EVE1} becomes
\begin{equation}\label{EVE2}
\Box_g g_{\alpha\beta} = P_{\alpha\beta}(\dr g,\dr g),
\end{equation}
where $\Box_g$ is the wave operator associated to $g$ and $P_{\alpha\beta}$ is a quadratic non-linearity. Thus, the Einstein vacuum equations are recast as a quasi-linear hyperbolic system for the metric coefficients, which allows one to define and study a Cauchy problem (see \cite{MR53338} for the first proof of local well-posedness using wave coordinates). 

One of the major question in general relativity is the question of the \textit{stability} of particular solutions, such as the Minkowski spacetime, which, thanks to \eqref{EVE2} reduces to a hyperbolic long-time existence problem. As it was proved by Choquet-Bruhat in \cite{MR1770106}, the non-linearity in \eqref{EVE2} does not satisfy the null condition presented in Section \ref{subsection long time}. Note that the Einstein vacuum equations do satisfy a null condition through a dynamical gauge choice, see the seminal work \cite{CK}. However, the Einstein vacuum equations in wave gauge satisfy what Lindblad and Rodnianski call a \textit{weak null condition}, allowing them to prove the non-linear stability of the Minkowski spacetime in this gauge in \cite{MR2680391}.

Therefore, studying the semi-linear equation $\Box u= Q(\dr u,\dr u)$ where $Q$ is a null form is a toy model for the Einstein vacuum equations: it is a semi-linear equation instead of a quasi-linear equation, with the quadratic part being a null form instead of merely satisfying the weak null condition.

\subsubsection{Burnett's conjecture}

The Einstein vacuum equations are highly non-linear, which a priori allows \textit{backreaction} to occur. This phenomenon can be described as follows: if a sequence of metrics $(g_\lambda)_\lambda$ converges in some sense to a background metric $g_0$, and if for all $\lambda$ we have $\mathrm{Ric}(g_\lambda)=0$, do we have $\mathrm{Ric}(g_0)=0$ ? Said differently, can we pass to the limit in the Ricci tensor? Identifying the potential limit matter models occuring because of backreaction is called the Burnett conjecture, and was introduced by Burnett in \cite{bur89}.

If the convergence $g_\lambda\to g_0$ happens in a very strong topology then the answer is obviously positive, but if the convergence is weak enough, we might obtain non-zero contributions from quadratic terms like $\dr (g_\lambda - g_0)\dr (g_\lambda - g_0)$.  In the framework chosen by Burnett, the convergence is the following: $g_\lambda$ converges strongly in $L^\infty$ and $\dr g_\lambda$ converges weakly in $L^2$.  High-frequency ansatz of the form \eqref{formal}-\eqref{pattern} precisely enjoy this type of convergence, and have been used to tackle Burnett's conjecture locally in time by Huneau and Luk in \cite{MR3881199}. This fully motivates the study of high-frequency solutions to \eqref{main equation}.

\begin{remark}
Even though the high-frequency solution $\Phi_\lambda$ converges in the Burnett sense to $\Phi^{(0)}$, no backreaction occurs in the case of the equation \eqref{main equation}, meaning that the background limit solution $\Phi^{(0)}$ also satisfy \eqref{main equation} (see also \eqref{equation sur ffi} in the reduced system). As Choquet-Bruhat noticed in \cite{MR1770106}, this is linked to the null condition satisfied by \eqref{main equation}.  Indeed, backreaction could only occur through the non-oscillating contributions of the squared sinus in \eqref{backreaction}, but as we saw \eqref{backreaction} vanishes because $Q$ is a null form. 

The absence of backreaction for \eqref{main equation} takes nothing away from the study of high-frequency solutions to this equation, as it is a first step towards proving the stability of high-frequency perturbation of some background metric $g_0$ close to Minkowski for the equation \eqref{EVE2}, which would require to incorporate the ideas of \cite{MR2680391} to the high-frequency setting presented in this article.
\end{remark}

\section{Preliminaries}

In this section, we describe our geometric and analytic setting, and introduce the key estimates used in this article.

\subsection{Notations and function spaces}

\subsubsection{Coordinates and derivatives}

We work in $\R_+\times \R^3$ with the usual global coordinates system $(t=x_0,x_1,x_2,x_3)$. We also introduce the null coordinates
\begin{equation*}
s=t+r \quad \text{and}\quad q=r-t.
\end{equation*}
For $t\geq 0$ we use the standard notation $\Sigma_t\vcentcolon= \{t\}\times \R^3$. We also define the following subset of $\R_+\times\R^3$ for some $R>0$:
\begin{align*}
     A^R&\vcentcolon=\enstq{(t,x)\in\R_+\times\R^3}{R^{-1}\leq q\leq R},\\
     B^R&\vcentcolon=\enstq{(t,x)\in\R_+\times\R^3}{q\leq R},
\end{align*}
and we set $A_0^R\vcentcolon=A^R\cap \Sigma_0$.  If $x=(x_1,x_2,x_3)$ and $r=|x|$, recall that $\dr_r=\sum_{i=1,2,3}\omega_i\dr_i$ where $\omega_i=\frac{x_i}{r}$.  If $f:\R_+\times\R^3\longrightarrow\R$ is a scalar function, then we define its spacetime gradient by the vector $\dr f=(\dr_t f, \dr_1 f,\dr_2 f,\dr_3 f)$.
The "good" derivatives are defined to be 
\begin{equation*}
\tdr_0 = \frac{1}{2}(\dr_t+\dr_r)\quad\text{and}\quad \tdr_i=\dr_i-\frac{x_i}{r}\dr_r.
\end{equation*}
As we will see, they enjoy better decay than the other derivatives.  In this article we consider $\R^d$-valued functions for some $d\geq 1$, i.e vectorial functions of the form $u=(u_1,\dots, u_d)$. For such a function we define
\begin{align*}
|\dr u| = \sum_{\substack{\alpha=0,\dots,3\\i=1,\dots,d}}  |\dr_\alpha u_i| \quad\text{and}\quad |\tdr u| = \sum_{\substack{\alpha=0,\dots,3\\i=1,\dots,d}}  |\tdr_\alpha u_i| .
\end{align*}

\subsubsection{Null forms}
Now we define the class of quadratic non-linearties we consider in this article, i.e the classical null forms introduced by Klainerman.

\begin{mydef}\label{def null form}
A quadratic form $Q$ is said to be a \textit{null quadratic form} (also called null form in the sequel) if it is a linear combination of $Q_0$ and $Q_{\alpha\beta}$ where
\begin{equation*}
Q_{\alpha\beta}(X,Y)=X_\alpha Y_\beta -  X_\beta Y_\alpha \quad\text{and}\quad Q_{0}(X,Y) = -X_0 Y_0+ \delta^{ij}X_iY_j
\end{equation*}
for two vector fields $X,Y$ on $\R_+\times \R^3$, and $\alpha,\beta=0,1,2,3$.
\end{mydef}
One can show that the null forms defined in this way are precisely the quadratic forms on the tangent bundle of Minkowski spacetime vanishing when contracted twice with the same null vector field.

\par\leavevmode\par
Since the spacetime gradient of a scalar function is a vector field on $\R_+\times \R^3$, this definition allows us to consider $Q(\dr u, \dr v)$ for $u,v$ two \textit{scalar} functions.  However in this article, we would like to consider \textit{vectorial} functions. Therefore if $u:\R_+\times \R^3\longrightarrow \R^d$ with $u=(u_1,\dots,u_d)$ and $v:\R_+\times \R^3\longrightarrow \R^{d'}$ with $v=(v_1,\dots,v_{d'})$,  and if $Q_{k\ell}$ are null forms, we define
\begin{align}
Q(\dr u,\dr v) \vcentcolon = \sum_{\substack{k=1,\dots,d\\\ell=1,\dots,d'}}Q_{k\ell}(\dr u_k,\dr v_\ell).\label{null form matrice}
\end{align}
With this notation, we can write down properly the system studied in this article. For $d\geq 1$, we want to solve the system
\begin{align*}
\Box u_i = Q_i (\dr u , \dr u)
\end{align*}
where $u:\R_+\times \R^3\longrightarrow \R^d$ is a vectorial function, i.e $u=(u_1,\dots,u_d)$, and where $Q_i$ is as in \eqref{null form matrice} for $i=1,\dots,d$. Though the Einstein equations motivates the study of a system of wave equations in this article, this plays no particular role here and from now on (with the exception of the statement of the main result) we will drop the indexes relative to $\R^d$ and simply use the condensed notation
\begin{align*}
\Box u = Q(\dr u,\dr u).
\end{align*}

\par\leavevmode\par
The null forms are in some sense the best quadratic non-linearity we can hope for in the sense that in each product $\dr u\dr u$, at least one of the derivatives is a good one, meaning that:
\begin{equation}\label{good null form}
|Q(\dr u, \dr v)|\lesssim |\dr u||\tdr v|+|\tdr u||\dr v|.
\end{equation}

\subsubsection{Function spaces}
In terms of function spaces, we use the usual $L^p$, $C^m$ and Sobolev spaces defined on $\R^3$, together with weighted Sobolev spaces, defined by 
\begin{mydef}\label{wss}
Let $m\in\N$, $1<p<\infty$, $\delta\in\R$. The weighted Sobolev space $W^{m,p}_{\delta}$ is the completion of $C^{\infty}_0$ under the norm
\begin{equation*}
    \| u\|_{W^{m,p}_{\delta}}=\sum_{|\beta|\leq m} \left\| \langle x \rangle^{\delta+\beta}\nabla^{\beta}u \right\|_{L^p}.
\end{equation*}
We will use the notation $H^m_{\delta}=W^{m,2}_{\delta}$.  The weighted Hölder space $C^m_{\delta}$ is the completion of $C^m$ under the norm
\begin{equation*}
    \| u\|_{C^m_{\delta}}=\sum_{|\beta|\leq m} \left\| \langle x \rangle^{\delta+\beta}\nabla^{\beta}u \right\|_{L^{\infty}}.
\end{equation*}
\end{mydef}
Note that we defined scalar functions spaces. However by setting
\begin{align*}
\l u \r_X = \sum_{i=1,\dots,d}\l u_i \r_X
\end{align*}
for $u=(u_1,\dots,u_d)$ and $X$ any previously defined space, we extend those spaces and the corresponding norms to $\R^d$-valued functions. A weighted equivalent of the Sobolev embedding holds for those spaces (see \cite{MR2473363} for a proof):
\begin{prop}\label{embedding}
Let $s,m\in\N$. If $s\geq 2$ and $\beta\leq \delta+\frac{3}{2}$, then we have the continuous embedding
\begin{equation*}
    H^{s+m}_{\delta}\xhookrightarrow{}C^{m}_{\beta}.
\end{equation*}
\end{prop}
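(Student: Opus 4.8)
The plan is to deduce the weighted embedding from the classical unweighted Sobolev embedding $H^{s+m}(\Omega)\hookrightarrow C^m(\Omega)$ on a \emph{fixed} annulus $\Omega\subset\R^3$ --- which holds since $s\geq 2>\tfrac32$, so that $s+m$ exceeds $m+\tfrac32$ --- by means of a dyadic decomposition of space combined with a scaling argument that records precisely how the weights $\langle x\rangle^{\delta+|\beta|}$ transform.

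First I would fix cutoff functions localizing to the dyadic annuli $A_j\vcentcolon=\{2^{j-2}\leq|x|\leq 2^{j+2}\}$ for $j\geq 0$, together with a piece supported in $\{|x|\lesssim 1\}$. On $\{|x|\lesssim 1\}$ the weights $\langle x\rangle^{\delta+|\gamma|}$ are bounded above and below, so the desired bound there follows at once from the ordinary Sobolev embedding on a ball, the weighted and unweighted norms being equivalent on a bounded set.

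On $A_j$ with $j\geq 1$ I would rescale, setting $u_j(y)\vcentcolon= u(2^j y)$ on the fixed annulus $\Omega=\{\tfrac14\leq|y|\leq 4\}$. For a multi-index $\gamma$ with $|\gamma|\leq m$ one has $\nabla^\gamma u_j(y)=2^{j|\gamma|}(\nabla^\gamma u)(2^j y)$, so the embedding on $\Omega$ gives $|(\nabla^\gamma u)(2^j y)|\lesssim 2^{-j|\gamma|}\|u_j\|_{H^{s+m}(\Omega)}$, with a constant independent of $j$ since $\Omega$ is fixed. Changing variables $x=2^j y$ in the norm $\|u_j\|_{H^{s+m}(\Omega)}$ produces a factor $2^{-3j}$ from the Jacobian and a factor $2^{2j|\beta|}$ from each derivative of order $|\beta|$; using $\langle x\rangle\sim 2^j$ on $A_j$, these combine to give $\|u_j\|_{H^{s+m}(\Omega)}^2\lesssim 2^{-3j-2j\delta}\,c_j^2$ where $c_j^2\vcentcolon=\sum_{|\beta|\leq s+m}\|\langle x\rangle^{\delta+|\beta|}\nabla^\beta u\|_{L^2(A_j)}^2$, and where $(c_j)_{j\geq 0}$ is $\ell^2$-summable with $\|(c_j)\|_{\ell^2}\lesssim\|u\|_{H^{s+m}_\delta}$ because the $A_j$ overlap with bounded multiplicity.

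Combining these estimates at a point $x$ with $|x|=2^j$ and multiplying by $\langle x\rangle^{\beta+|\gamma|}\sim 2^{j(\beta+|\gamma|)}$ yields $\langle x\rangle^{\beta+|\gamma|}|\nabla^\gamma u(x)|\lesssim 2^{j(\beta-\delta-3/2)}c_j$. Under the hypothesis $\beta\leq\delta+\tfrac32$ the exponent $\beta-\delta-\tfrac32$ is nonpositive, hence $2^{j(\beta-\delta-3/2)}c_j\leq c_j\leq\|(c_j)\|_{\ell^2}\lesssim\|u\|_{H^{s+m}_\delta}$ uniformly in $j$; taking the supremum over all $x$, adding the contribution near the origin, and summing over $|\gamma|\leq m$ gives $\|u\|_{C^m_\beta}\lesssim\|u\|_{H^{s+m}_\delta}$. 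I expect the only genuinely delicate point to be the endpoint $\beta=\delta+\tfrac32$: there the dyadic pieces do not decay geometrically in $j$, so one cannot sum a geometric series and must instead exploit the $\ell^2$-summability of $(c_j)$ (each term being bounded by the full $\ell^2$ norm), which in turn relies on the strict inequality $s>\tfrac32$, guaranteed here by $s\geq 2$, so that the local embedding $H^{s+m}(\Omega)\hookrightarrow C^m(\Omega)$ is available with a constant uniform over the rescaled copies of $\Omega$.
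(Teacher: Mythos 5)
Your argument is correct. Note that the paper does not prove this proposition at all: it is quoted from the literature (the reference given is Choquet-Bruhat's book), so there is no in-paper proof to compare against; your dyadic rescaling argument is in fact the standard way such weighted embeddings are established in those references. The key computations check out: with $u_j(y)=u(2^jy)$ on the fixed annulus $\Omega$, the change of variables gives exactly $\|u_j\|_{H^{s+m}(\Omega)}^2\lesssim 2^{-j(3+2\delta)}c_j^2$ since the factor $2^{2j|\beta'|}$ from the chain rule cancels against $\langle x\rangle^{-2(\delta+|\beta'|)}\sim 2^{-2j(\delta+|\beta'|)}$, and the local embedding $H^{s+m}(\Omega)\hookrightarrow C^m(\overline{\Omega})$ holds with a $j$-independent constant precisely because $s\geq 2>\tfrac32$ and $\Omega$ is a fixed smooth bounded domain (so the embedding is valid up to the boundary, which you need since the rescaled annuli only cover $\{|x|\gtrsim 1\}$ with bounded overlap). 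Your handling of the endpoint $\beta=\delta+\tfrac32$ is also the right observation: since $C^m_\beta$ is an $L^\infty$-type norm, one never needs to sum the dyadic contributions, only to bound each one by $\sup_j c_j\leq\|(c_j)\|_{\ell^2}\lesssim\|u\|_{H^{s+m}_\delta}$, the last inequality using the finite overlap of the $A_j$. The only point worth making explicit is that the estimate is first proved for $u\in C^\infty_0$ and then extended by density, since the paper defines $H^{s+m}_\delta$ as the completion of $C^\infty_0$ under the weighted norm; this is immediate but should be said.
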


\subsection{Minkowski vector fields}

We consider the set of Minkowski vector fields 
\begin{equation}
\left\{ \dr_\alpha, S=t\dr_t+r\dr_r,\Omega_{\alpha\beta}=x_\alpha\dr_\beta-x_\beta\dr_\alpha \right\}\label{minkowski vector fields}
\end{equation}
and denote by $Z^I$ any product of $|I|$ elements of this set, where $I$ is an 11-dimensional integer. Note that since $x_0=-x^0$, we have $\Omega_{0i}=-t\dr_i-x_i\dr_t$. These vector fields allow us to recover the usual derivatives $\dr$ while gaining a weight, whose nature explain why the derivatives $\tdr$ are called "good" derivatives:
\begin{equation}
|\dr u |\lesssim\frac{1}{1+|q|}\sum_{|I|=1}|Z^Iu|\quad\text{and}\quad|\tdr u |\lesssim \frac{1}{1+s}\sum_{|I|=1}|Z^Iu|.\label{decay minkowski}
\end{equation}
\par\leavevmode\par
The Minkowski vector fields enjoy good commutativity properties with the wave operator:
\begin{equation*}
[\Box,Z]=0
\end{equation*}
except for $S$, which satisfies $[\Box,S]=2\Box$. This implies that 
\begin{equation}\label{wave operator}
|\Box Z^I u |\lesssim\sum_{|I'|\leq |I|}|Z^{I'}\Box u|.
\end{equation}
The Minkowski vector fields behave nicely with the null forms, in the sense that the following holds:
\begin{equation}\label{null form}
|Z^IQ(\dr u , \dr v)| \lesssim \sum_{|I_1|+|I_2|\leq |I|}|Q(\dr Z^{I_1}u,\dr Z^{I_2}v)|
\end{equation}

\subsection{Useful estimates}

In this section we collect different estimates crucially used in this paper.

\subsubsection{Weighted Klainerman-Sobolev inequality} We start by an estimate which allows us to bound the $L^\infty$ norm of a function knowing the $L^2$ norm of some of its derivatives with weights.

\begin{prop}\label{prop WKNS}
Consider the weight 
\begin{equation*}
    w(q)\vcentcolon=\left\lbrace
\begin{array}{c}
1+(1+|q|)^{-\alpha}\quad\text{for}\quad q<0\\
1+(1+|q|)^{\beta}\quad\text{for}\quad q>0
\end{array}\right.
\end{equation*}
with $1<\beta<3$ and $\alpha>0$. Let $f$ be compactly supported, we have
\begin{equation}
    |w^{\frac{1}{2}}(q)f|\lesssim \frac{1}{(1+s)\sqrt{1+|q|}}\sum_{|I|\leq 3}\l w^{\frac{1}{2}}(q) Z^If\r_{L^2}.\label{inequality WKNS}
\end{equation}
\end{prop}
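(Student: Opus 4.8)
The plan is to prove the weighted Klainerman–Sobolev inequality \eqref{inequality WKNS} by reducing it to a one-dimensional (radial) weighted Sobolev embedding on dyadic regions, combined with the standard Klainerman–Sobolev argument on the angular variables, tracking the weight $w(q)$ carefully. The key point is that the Minkowski vector fields $\partial_\alpha$, $S$, and $\Omega_{\alpha\beta}$ span, at each point away from the light cone, enough directions to run a genuine $3$-dimensional Sobolev embedding $H^3 \hookrightarrow L^\infty$ at unit scale, after rescaling.

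First I would fix a spacetime point $(t,x)$ with $r=|x|$, and localize to the region $\{|x-x'|\le \tfrac{1}{4}(1+s), \ |r'-r|\le \tfrac{1}{4}(1+|q|)\}$ on the slice $\Sigma_t$ — roughly, a box of size $(1+|q|)$ in the radial direction and $(1+s)$ in the two angular directions (since $|x|\,d\omega$ has length $\sim r \sim s$ there). On this box the weight $w(q')$ is comparable to $w(q)$, because $|q'-q|\lesssim 1+|q|$ forces $1+|q'|\sim 1+|q|$; so $w^{1/2}(q)$ behaves like a constant and may be absorbed into $f$, i.e.\ it suffices to bound $|g(t,x)|$ for $g = w^{1/2}(q)f$ by the $L^2$ norms of $Z^I g$. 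Next, I would rescale this anisotropic box to the unit cube via $y_1 = (r'-r)/(1+|q|)$ and $y' = $ (angular coordinates scaled by $1/(1+s)$), apply the ordinary Sobolev embedding $H^3(\text{unit cube}) \hookrightarrow L^\infty$ to the rescaled function, and undo the scaling. Each radial derivative $\partial_{y_1}$ produces a factor $(1+|q|)$ and corresponds to $\partial_r \sim \partial_r$, while each angular derivative $\partial_{y'}$ produces a factor $(1+s)$ and corresponds to a rotation $\Omega_{ij}$ (up to the factor $r\sim s$); the volume Jacobian of the box is $\sim (1+|q|)(1+s)^2$, which after the rescaling of the $L^2$ norm contributes the prefactor $(1+s)^{-1}(1+|q|)^{-1/2}$. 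Collecting $\sum_{|I|\le 3}\|w^{1/2}(q)Z^I g\|_{L^2}$ — note that commuting $w^{1/2}(q)$ past the vector fields is harmless since $w$ depends only on $q=r-t$ and the relevant $Z$'s either annihilate $q$ or act on it mildly; one checks $|Z w(q)| \lesssim w(q)$ — gives exactly the right-hand side.

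The main obstacle I expect is bookkeeping the weight through the commutators and making sure the anisotropic rescaling is set up so that precisely the vector fields in \eqref{minkowski vector fields} appear (and not, say, the scaling field $S$ with an awkward coefficient). Concretely: in the region where $t\sim r$ (near but not on the light cone) one must use $S = t\partial_t + r\partial_r$ together with $\partial_t$ to recover $\partial_r$ with the good weight; in the region $r \ll t$ or $r \gg t$ the plain translations $\partial_\alpha$ already suffice. A clean way to handle this uniformly is to first prove the inequality on $\Sigma_t$ using only spatial rotations $\Omega_{ij}$ and the radial derivative $\partial_r$ as the ``box'' vector fields at fixed $t$ — which gives the $(1+s)^{-1}$ from angular volume and $(1+|q|)^{-1/2}$ from radial volume once one expresses $\partial_r$ in terms of $Z$'s with the $(1+|q|)^{-1}$ weight from \eqref{decay minkowski} — and then to note that at fixed $t$, $r\partial_r$ is a combination of $S$ and $t\partial_t$, so one does incur the $\partial_t$ direction, which is fine. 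An alternative, and arguably the cleanest route, is to cite the standard proof (e.g.\ from Sogge's or Alinhac's books) for the unweighted inequality $|f|\lesssim (1+s)^{-1}(1+|q|)^{-1/2}\sum_{|I|\le 3}\|Z^I f\|_{L^2}$ and then observe that applying it to $w^{1/2}(q)f$ in place of $f$, together with the elementary bound $\sum_{|I|\le 3}\|Z^I(w^{1/2}(q)f)\|_{L^2} \lesssim \sum_{|I|\le 3}\|w^{1/2}(q)Z^I f\|_{L^2}$ (valid because $w'(q)/w(q)$ and its $Z$-derivatives are bounded, using $1<\beta<3$, $\alpha>0$ only for the decay of $w$ and to ensure $w$, $w^{-1}$ stay controlled), yields \eqref{inequality WKNS}. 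The verification of that last commutator estimate is the one genuinely new computation, and it is routine: $Z(w^{1/2}(q)) = \tfrac12 w^{-1/2}(q) w'(q)\, (Zq)$ with $|Zq|\lesssim 1+|q|$ for $Z\in\{\partial_\alpha,\Omega_{\alpha\beta}\}$ and $Zq = q$ for $Z=S$ (at the relevant points), while $|w'(q)|(1+|q|)\lesssim w(q)$ by inspection of both branches of $w$.
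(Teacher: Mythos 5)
The paper itself does not prove Proposition \ref{prop WKNS}: the statement is quoted from Lindblad--Rodnianski \cite{MR2680391}, and the proof there is essentially your first route --- localize around $(t,x)$ to a region where $1+|q'|\sim 1+|q|$ (hence $w(q')\sim w(q)$), rescale anisotropically ($1+|q|$ radially, $1+s$ angularly), apply a unit-scale Sobolev embedding, and convert the rescaled derivatives into the vector fields \eqref{minkowski vector fields}; the Jacobian produces the factor $(1+s)^{-1}(1+|q|)^{-1/2}$. So your main plan is the standard one.

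The genuine gap is the commutator claim on which both of your routes ultimately lean, namely $\sum_{|I|\le 3}\|Z^I(w^{1/2}(q)f)\|_{L^2}\lesssim \sum_{|I|\le 3}\|w^{1/2}(q)Z^I f\|_{L^2}$, justified by saying that $w'/w$ ``and its $Z$-derivatives are bounded''. At first order this is fine: $\Omega_{ij}q=0$, $Sq=q$, $\Omega_{0i}q=\omega_i q$, $|\dr q|\le 1$, and $(1+|q|)\,|w'(q)|\lesssim w(q)$, so $|Zw^{1/2}|\lesssim w^{1/2}$. But \eqref{inequality WKNS} needs three vector fields, and at second and third order the $Z$'s hit $\omega_i$: for instance $\Omega_{0j}\omega_i=-\frac{t}{r}(\delta_{ij}-\omega_i\omega_j)$ and $\dr_j\omega_i=\frac{1}{r}(\delta_{ij}-\omega_i\omega_j)$, so $Z^2q$, and hence $Z^2(w^{1/2})$, picks up factors $t/r$ and $1/r$ which are unbounded in the interior region $r\ll t$ and near the spatial origin, where $q=r-t$ is not even smooth. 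Thus the pointwise bound $|Z^I(w^{1/2})|\lesssim w^{1/2}$ is false for $|I|=2,3$, route 2 (applying the unweighted inequality to $w^{1/2}f$) does not close as written, and the same objection applies to the sentence in route 1 where you absorb $w^{1/2}$ into $f$ and then commute it back out. The fix is to never differentiate the weight: keep $w^{1/2}(q)$ frozen at its value at the point (equivalently, on the dyadic shell in $1+|q|$), run the localized unweighted Klainerman--Sobolev argument on that region, and only at the end use $w(q')\sim w(q)$ there to slide the weight inside the local $L^2$ norm; alternatively note that for $q\le 0$ one has $1\le w\le 2$ while $w\ge 1$ everywhere, so the interior case follows outright from the unweighted inequality, and the weight genuinely matters only in the exterior, where the localization argument gives it for free.
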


This is the so-called weighted Klainerman-Sobolev inequality, for which a proof can be found in \cite{MR2680391}.  The assumption on $f$ in the previous proposition can be relaxed: the proposition holds as long as the RHS of \eqref{inequality WKNS} is finite.

\subsubsection{Weighted energy estimate}

As is standard, our strategy of proof relies on energy estimates for the wave operator $\Box$.

\begin{lem}\label{weighted energy estimate}
Let $w(q)$ be a non-decreasing weight function. Then 
\begin{equation*}
    \int_{\Sigma_t}|\dr\ffi|^2 w(q)\d x+\int_0^t\int_{\Sigma_{\tau}}|\tdr\ffi|^2w'(q)\d x\d\tau\lesssim \int_{\Sigma_0}|\dr\ffi|^2 w(q)\d x+\int_0^t\int_{\Sigma_{\tau}} |\Box\ffi\dr_t\ffi|w(q) \d x\d\tau.  
\end{equation*}
\end{lem}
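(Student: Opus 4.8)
The plan is to prove this standard weighted energy estimate via the multiplier method, using $\partial_t\ffi$ as the multiplier together with the ghost weight $w(q)$. First I would multiply the equation $\Box\ffi = \Box\ffi$ (viewed abstractly, with $\Box\ffi$ as a source) by $w(q)\,\partial_t\ffi$ and integrate over the slab $[0,t]\times\R^3$. The left-hand side is manipulated into a divergence plus a good term: writing out $w(q)\partial_t\ffi\,\Box\ffi = w(q)\partial_t\ffi(-\partial_t^2\ffi + \Delta\ffi)$, one recognizes $-w(q)\partial_t\ffi\,\partial_t^2\ffi = -\tfrac12\partial_t\!\left(w(q)|\partial_t\ffi|^2\right)$ since $w(q)$ has no $t$-derivative contribution of the wrong sign — actually $\partial_t w(q) = -w'(q)$ because $q = r - t$, and this is precisely where the coercive bulk term is generated. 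Similarly $w(q)\partial_t\ffi\,\Delta\ffi = \dive\!\left(w(q)\partial_t\ffi\,\nabla\ffi\right) - w(q)\nabla\partial_t\ffi\cdot\nabla\ffi - w'(q)\partial_r\ffi\,\partial_t\ffi$, and the middle term combines with the time-derivative term to give $-\tfrac12\partial_t\!\left(w(q)|\nabla\ffi|^2\right)$.

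Collecting everything, the identity reads schematically
\begin{align*}
-\tfrac12\partial_t\!\left(w(q)|\dr\ffi|^2\right) + \dive\!\left(w(q)\partial_t\ffi\,\nabla\ffi\right) - \tfrac12 w'(q)\left(|\partial_t\ffi|^2 + |\nabla\ffi|^2\right) - w'(q)\,\partial_t\ffi\,\partial_r\ffi = w(q)\,\partial_t\ffi\,\Box\ffi,
\end{align*}
where $|\dr\ffi|^2 = |\partial_t\ffi|^2 + |\nabla\ffi|^2$. The key algebraic point is that $-\tfrac12|\partial_t\ffi|^2 - \tfrac12|\nabla\ffi|^2 - \partial_t\ffi\,\partial_r\ffi = -\tfrac12|\partial_t\ffi + \partial_r\ffi|^2 - \tfrac12\left(|\nabla\ffi|^2 - |\partial_r\ffi|^2\right)$, and both $|\partial_t\ffi + \partial_r\ffi|^2$ and $|\nabla\ffi|^2 - |\partial_r\ffi|^2 = |\slashed\nabla\ffi|^2$ are (up to constants) exactly $|\tdr\ffi|^2$ in the notation of the excerpt, since $\tdr_0 = \tfrac12(\partial_t+\partial_r)$ and the $\tdr_i$ span the angular directions. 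Hence, since $w$ is non-decreasing, $w'(q)\geq 0$ and the term $-\tfrac12 w'(q)(\cdots)$ contributes $+\,c\int_0^t\int_{\Sigma_\tau} w'(q)|\tdr\ffi|^2$ to the left after moving it to the other side.

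Then I would integrate in space and time: the $\partial_t$-divergence integrates to $\int_{\Sigma_t}w(q)|\dr\ffi|^2\,\d x - \int_{\Sigma_0}w(q)|\dr\ffi|^2\,\d x$, and the spatial divergence $\dive(\cdots)$ integrates to zero by compact support (or, if one worries about boundary contributions on the light cone, one notes $\ffi$ decays, or one argues on truncated regions and passes to the limit — for the compactly supported / finite-speed-of-propagation setting here this is immediate). Bounding the right-hand side by $\int_0^t\int_{\Sigma_\tau}|\Box\ffi\,\partial_t\ffi|\,w(q)\,\d x\,\d\tau$ and absorbing constants gives the claim. The only mild subtlety — which I would handle at the start — is the behavior of $w(q)$ and its derivative near $r=0$: since $q = r-t$ ranges over all of $\R$ and $w$ is merely assumed non-decreasing, $w$ and $w'$ are bounded on any bounded $q$-interval and the pointwise identity holds a.e.; no genuine obstacle arises. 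I do not expect any step to be a serious obstacle; the one place requiring care is the sign bookkeeping in the ghost-weight term, ensuring that the combination of $-\tfrac12 w'(q)|\dr\ffi|^2$ with the cross term $-w'(q)\partial_t\ffi\,\partial_r\ffi$ is genuinely $\leq -c\,w'(q)|\tdr\ffi|^2$ and therefore yields a favorable sign after transposition.
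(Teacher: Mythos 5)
Your proof is correct: it is exactly the standard ghost-weight multiplier computation (multiply by $w(q)\dr_t\ffi$, use $\dr_t w(q)=-w'(q)$, complete the square to recognize $w'(q)$ times the good derivatives), which is the argument behind Lemma 6.1 of Lindblad--Rodnianski that the paper cites instead of writing out a proof. The algebraic identity, the sign bookkeeping in the coercive bulk term, and the integration/boundary discussion are all accurate, so there is nothing to correct.
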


This lemma is the semi-linear equivalent of Lemma 6.1 in \cite{MR2680391}.  It gives a \textit{weighted} energy estimate, with a weight depending only on $q$. This is called the \textit{ghost weight} method and was first introduced by Alinhac in \cite{MR1856402}. Depending on the choice of the weight, it gives us extra decay for solutions of a wave equation in the region $q>0$, i.e the exterior of the light-cone. The other advantage of this method is the presence of a space-time integral on the LHS involving only the "good" derivatives of the solution. 

The usual energy estimate for the wave operator is obtained when $w(q)\equiv 1$, and one of the standard application of this case is the \textit{finite speed of propagation} for solutions of semi-linear wave equation.
\begin{lem}\label{speed}
We define
\begin{equation*}
C_{t_0,x_0}=\enstq{(t,x)\in\R_+\times \R^3}{0\leq t\leq t_0\quad\text{and}\quad |x-x_0|\leq t_0-t}
\end{equation*}
and $B(x_0,t_0) $ is the ball in $\R^3$ centered at $x_0$ of radius $t_0$.
Let $P:\R^{3+1}\to\R$ a smooth function such that 
\begin{equation*}
|P(X)|\lesssim |X|+|X|^2.
\end{equation*}
Let $(t_0,x_0)\in\R_+\times \R^3$ and $u:\R^{3+1}\to\R$ a solution of $\Box u=P(\dr u)$ in $C_{t_0,x_0}$. If $(u,\dr_t u)$ initially vanishes on $B(x_0,t_0)$, then $u_{|C_{t_0,x_0}}=0$.
\end{lem}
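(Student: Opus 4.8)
The plan is to prove Lemma \ref{speed}, the finite speed of propagation property, by a standard energy argument on truncated backwards light cones. The key object is the local energy
\[
E(\tau) = \int_{B(x_0,t_0-\tau)}\left(|\dr_t u|^2 + |\nabla u|^2\right)\d x,
\]
defined for $0\le \tau\le t_0$, and the goal is to show $E\equiv 0$ using a Grönwall-type argument, given that $E(0)=0$ from the vanishing initial data.

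First I would differentiate $E$ in $\tau$. The derivative has two contributions: the bulk term coming from differentiating the integrand, and the boundary term coming from the shrinking of the domain $B(x_0,t_0-\tau)$. For the bulk term, I would use the equation $\Box u = P(\dr u)$, integrate by parts the Laplacian term, and collect everything; the flux term from integration by parts on $\dr B(x_0,t_0-\tau)$ combines with the boundary term from the moving domain. The classical computation shows this combined boundary contribution has a favourable sign (it is $\le 0$), precisely because the cone is a null hypersurface — this is where the structure of $C_{t_0,x_0}$ is used. Hence one is left with $E'(\tau) \le C\int_{B(x_0,t_0-\tau)} |\dr_t u|\,|P(\dr u)|\,\d x$.

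Next I would estimate the right-hand side using the hypothesis $|P(X)|\lesssim |X| + |X|^2$. This gives $|\dr_t u|\,|P(\dr u)| \lesssim |\dr u|^2 + |\dr u|^3$. The quadratic term is directly bounded by the integrand of $E(\tau)$, so it contributes $\lesssim E(\tau)$. The cubic term is the only slightly delicate point: one controls it by $\|\dr u\|_{L^\infty(B(x_0,t_0-\tau))}\cdot E(\tau)$, and since $u$ is a smooth solution on the compact set $C_{t_0,x_0}$, the quantity $\sup_{C_{t_0,x_0}}|\dr u|$ is a finite constant $M$ depending on the solution. Thus $E'(\tau)\le C(1+M)\,E(\tau)$ for all $\tau\in[0,t_0]$, with $E(0)=0$. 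Grönwall's inequality then forces $E(\tau)=0$ for all $\tau\in[0,t_0]$, which means $\dr u \equiv 0$ on $C_{t_0,x_0}$; combined with $u_{|t=0}=0$ on $B(x_0,t_0)$, integrating along time gives $u_{|C_{t_0,x_0}}=0$.

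The main obstacle, such as it is, is making the boundary/flux cancellation rigorous: one must carefully write the time derivative of an integral over a domain with moving boundary (a coarea or Reynolds transport argument), handle the induced measure on the lateral boundary of the cone, and check that the relevant quadratic form in $(\dr_t u, \nabla u)$ evaluated against the outward null conormal is nonnegative. This is entirely classical but is the only place where a genuine computation — rather than a routine estimate — is needed. The cubic term handling is mild here since we already know a priori that $u$ is a given smooth solution on the cone, so no bootstrap is required; it would only matter if one were proving existence rather than uniqueness/localization.
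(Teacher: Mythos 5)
Your proof is correct and is exactly the argument the paper has in mind: the paper offers no written proof of Lemma \ref{speed}, presenting it as the standard consequence of the unweighted ($w\equiv 1$) energy estimate of Lemma \ref{weighted energy estimate}, i.e.\ precisely the local energy identity on truncated backwards light cones with the favourable null-boundary sign, followed by Gr\"onwall. Your handling of the cubic term via $\sup_{C_{t_0,x_0}}|\dr u|<\infty$ (no bootstrap needed, since $u$ is a given smooth solution on the compact cone) is the right observation and closes the argument.
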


This lemma will be used to localize the different terms in the decomposition of our solution (see \eqref{ansatz}).

\section{Main result}

We now state the main result of this article.

\begin{thm}\label{premier thm}
Let $d\geq 1$, $N\geq 26$, $R>0$, $-\half<\delta < \frac{1}{2}$, $F_0\in C^{N-4}(\R^3,\R^d)$ supported in $A_0^R$ and $(\ffi_0,\ffi_1)\in H^{N+1}_{\delta}(\R^3,\R^d)\times H^{N}_{\delta+1}(\R^3,\R^d)$ such that
\begin{equation*}
    \| F_0\|_{C^{N-4}}+\l\ffi_0\r_{H^{N+1}_{\delta}}+\l\ffi_1\r_{H^{N}_{\delta+1}}\leq \e.
\end{equation*}
There exists $\e_0=\e_0(N,R,\delta)>0$ such that if $\e\leq \e_0$ and $\lambda\in(0,1]$ there exists a unique global solution $\Phi_{\lambda}:\R_+\times \R^3\longrightarrow\R^d$ to
\[
\left \{
\begin{array}{rcl}
\Box\Phi_{\lambda,i}&=&Q_i(\dr\Phi_{\lambda},\dr\Phi_{\lambda}) \\
 \Phi_{\lambda|t=0}&=&\ffi_0+\lambda F_0\cos\left( \frac{r}{\lambda}\right) \\
  \dr_t\Phi_{\lambda|t=0} & = & \ffi_1+ F_0\sin\left( \frac{r}{\lambda}\right)+\lambda\Tilde{F}_{0,\lambda}
\end{array}
\right.
\]
where $Q_i$ is a null form and $\Tilde{F}_{0,\lambda}$  is supported in $A^R_0$ depending only on $(\ffi_0,\ffi_1,F_0)$ and which satisfies $ \l \Tilde{F}_{0,\lambda}\r_{L^\infty}\lesssim \e$. The function $\Phi_{\lambda}$ admits the decomposition
\begin{equation*}
    \Phi_{\lambda} = \ffi+\lambda F\cos\left(\frac{t-r}{\lambda}\right) +\lambda^2\Tilde{F}_{\lambda},
\end{equation*}
where
\begin{itemize}
\item $\ffi\in H^{N+1}(\R_+\times\R^3,\R^d)$ satisfies $\Box\ffi_i=Q_i(\dr\ffi,\dr\ffi)$ and $(\ffi,\dr_t\ffi)_{|t=0}=(\ffi_0,\ffi_1)$,
\item $F\in C^{N-4}(\R_+\times\R^3,\R^d)$ is supported in $A^R$ and satisfies 
\begin{align*}
\left(\dr_t+\dr_r+\frac{1}{r}\right)F_i=\half \sum_{k,\ell=1,\dots,d}F_kQ_i(\dr (t-r),\dr\ffi_\ell)
\end{align*}
and $F_{|t=0}=F_0$,
\item $\Tilde{F}_\lambda$ is supported in $B^R$ and it is such that $|\Tilde{F}_\lambda|\lesssim\e$ and $|\dr\Tilde{F}_\lambda|\lesssim\frac{\e}{\lambda}$.
\end{itemize}
\end{thm}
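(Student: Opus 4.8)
The plan is to construct the solution $\Phi_\lambda$ by the ansatz-plus-remainder method outlined in Section~\ref{subsection long time}, and then close a bootstrap for the remainder via the weighted energy estimate (Lemma~\ref{weighted energy estimate}) and the weighted Klainerman-Sobolev inequality (Proposition~\ref{prop WKNS}). First I would build the \emph{background solution} $\ffi$: since $(\ffi_0,\ffi_1)\in H^{N+1}_\delta\times H^N_{\delta+1}$ are small in a weighted Sobolev norm, and $\Box\ffi=Q(\dr\ffi,\dr\ffi)$ is Klainerman's classical null-form equation, global existence and the decay $\l\dr Z^I\ffi\r_{L^2}\lesssim\e$ (with the pointwise decay $|\dr Z^I\ffi|\lesssim\e\,(1+s)^{-1}(1+|q|)^{-1/2}$ from Proposition~\ref{prop WKNS}, and the improved $|\tdr Z^I\ffi|\lesssim\e(1+s)^{-3/2}$ for good derivatives) follow from the standard vector field method; the weighted data just ensures the initial energy with the ghost weight $w(q)$ is finite. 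Next I would solve the transport equation for $F$: rewriting $(\dr_t+\dr_r+\tfrac1r)F_i=\tfrac12\sum F_k Q_i(\dr(t-r),\dr\ffi_\ell)$ as $(\dr_t+\dr_r)(rF_i)=\tfrac{r}{2}\sum F_kQ_i(\dr(t-r),\dr\ffi_\ell)$ and integrating along the rays $r-t=\mathrm{const}$, using Proposition~\ref{equation de transport prop}; the null-form structure $Q_i(\dr(t-r),\cdot)$ contracts one slot with the null covector $\dr(t-r)$, so only good derivatives of $\ffi$ enter and $|F|$ stays bounded by $\e$ with $r^{-1}$-type decay, while finite speed of propagation (Lemma~\ref{speed}) and the support of $F_0$ in $A_0^R$ confine $F$ to $A^R$.

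The core of the argument is the equation for $\Tilde F_\lambda$. Substituting $\Phi_\lambda=\ffi+\lambda F\cos(\tfrac{t-r}{\lambda})+\lambda^2\Tilde F_\lambda$ into $\Box\Phi_\lambda=Q(\dr\Phi_\lambda,\dr\Phi_\lambda)$ and using that $\dr(t-r)$ is null — which kills the $\lambda^{-2}$ term in the expansion~\eqref{intro exemple} of $\Box(\lambda F\cos(\tfrac{t-r}{\lambda}))$, and also kills the back-reaction term~\eqref{backreaction} because $Q$ is a null form — together with the eikonal/transport equations for $\ffi$ and $F$ which cancel the $\lambda^{-1}$ and $\lambda^0$ obstructions, one obtains a semi-linear wave equation of the schematic form
\begin{equation*}
\Box\Tilde F_\lambda = Q(\dr\Tilde F_\lambda,\dr\Tilde F_\lambda) + \lambda\, Q(\dr\ffi,\dr\Tilde F_\lambda)\text{-type cross terms} + \lambda^{-2}\cdot\lambda^2\cdot(\text{source in }\ffi,F)
\end{equation*}
where the source is a polynomial expression in $\ffi,F$ and their derivatives (including $\Box F$, estimated directly via Lemma~\ref{d'alembertien} rather than by commuting, to gain the extra $r^{-3}$ decay), all supported in $B^R$, and all of bounded size in the relevant weighted spaces with appropriate decay. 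I would set up bootstrap assumptions $\l w^{1/2}\dr Z^I\Tilde F_\lambda\r_{L^2}\lesssim\e\,\lambda^{-\kappa_{|I|}}$ for $|I|\le N-$(a few), where the exponents $\kappa_{|I|}$ grow by one per derivative (reflecting that each $Z$ hitting the $\cos(\tfrac{t-r}{\lambda})$ embedded in the source costs a $\lambda^{-1}$), calibrated so that the worst term $Q(\dr\Tilde F_\lambda,\dr\Tilde F_\lambda)$ — whose $L^\infty\cdot L^2$ bound via Klainerman-Sobolev loses three derivatives, hence $\lambda^{-3}$, relative to the naive count, explaining the need for $K\ge4$ precision in the ansatz — is still integrable in time with a small constant. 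Applying Lemma~\ref{weighted energy estimate}, bounding the null-form nonlinearity using~\eqref{good null form} to trade a bad derivative for a good one (which carries the integrable weight $w'(q)$ on the LHS spacetime term, or the $(1+s)^{-1}$ decay), and feeding in the $\ffi$- and $F$-decay for the source, I would recover the bootstrap with the constant improved by a factor of $\e$ (for the nonlinear terms) plus the source contribution, closing it for $\e\le\e_0(N,R,\delta)$ with $\e_0$ \emph{independent of $\lambda$}. Finiteness of the initial weighted energy of $\Tilde F_\lambda$, of size $O(\e)$ uniformly in $\lambda$, follows from the explicit form of $\Tilde F_{0,\lambda}$ in terms of $(\ffi_0,\ffi_1,F_0)$.

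The main obstacle is the interplay between the negative powers of $\lambda$ and the time-decay: the source terms in the $\Tilde F_\lambda$ equation, when differentiated, produce factors up to $\lambda^{-3}$ or $\lambda^{-4}$, and these must be absorbed by genuine decay in $s$ and $q$ rather than by smallness of $\e$, since $\e_0$ cannot depend on $\lambda$. This forces a delicate choice of the hierarchy of $\lambda$-exponents $\kappa_{|I|}$ in the bootstrap and a careful exploitation of the null structure at every step — both in the source (so that only good, fast-decaying derivatives of $\ffi$ and $F$ appear, via Lemma~\ref{d'alembertien} for the $\Box F$ term) and in the nonlinearity $Q(\dr\Tilde F_\lambda,\dr\Tilde F_\lambda)$ — combined with the ghost-weight spacetime term on the LHS of Lemma~\ref{weighted energy estimate} to control the good derivatives. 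The bookkeeping of which weighted norm of $\ffi$ and $F$ (in the $\langle x\rangle^\delta$-weighted spaces, using Proposition~\ref{embedding}) is needed at each order, and checking that $N\ge 26$ leaves enough derivatives after the three-derivative Klainerman-Sobolev loss and the various commutations, is the technical heart; the high-frequency limit $\lambda\to0$ is then immediate since all constants are $\lambda$-uniform.
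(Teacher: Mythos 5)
There is a genuine gap. Your ansatz stops at $\Phi_\lambda=\ffi+\lambda F\cos\bigl(\tfrac{t-r}{\lambda}\bigr)+\lambda^2\Tilde F_\lambda$ and you then try to close a bootstrap for $\Tilde F_\lambda$ directly, with $\lambda$-dependent exponents $\kappa_{|I|}$. But after cancelling the $O(1)$ terms via the equations for $\ffi$ and $F$, the equation for $\lambda^2\Tilde F_\lambda$ still contains source terms of size $O(\e\lambda)$ (for instance $\lambda\,\Box F\cos\bigl(\tfrac{t-r}{\lambda}\bigr)$ and the $\lambda F\tdr F$-type terms coming from \eqref{trigo 4}, which also carry the new harmonic $\cos\bigl(\tfrac{2(t-r)}{\lambda}\bigr)$ and a non-oscillating part). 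Hence $\Box\Tilde F_\lambda$ has a source of size $\e\lambda^{-1}$, and the weighted energy estimate of Lemma \ref{weighted energy estimate} sees only its amplitude, not its oscillation: it yields $\l\dr\Tilde F_\lambda\r_{L^2}\lesssim\e\lambda^{-1}$ at zero derivatives, which is incompatible with the bound $|\Tilde F_\lambda|\lesssim\e$ you must prove, and the loss compounds with each $Z$ and with the three derivatives lost in Proposition \ref{prop WKNS}, so the quadratic term $Q(\dr\Tilde F_\lambda,\dr\Tilde F_\lambda)$ enters the energy inequality with a coefficient carrying strictly negative powers of $\lambda$ that cannot be absorbed by an $\e_0$ independent of $\lambda$. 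No calibration of the exponents $\kappa_{|I|}$ fixes this, because the only mechanism in this framework that converts oscillation of the source into a gain of $\lambda$ is to absorb it into further profile terms solving transport equations along $t-r=\mathrm{const}$.

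This is precisely what the paper does and what your plan omits: one must first construct the higher-order profiles, namely the harmonics $F^{(k,i)}$ (solving the hierarchy \eqref{equation sur F ki}, with $\Box F^{(k-1)}$ estimated via Lemma \ref{d'alembertien} to get $r^{-3}$ decay) \emph{and} the non-oscillating correctors $\psi^{(k)}$ (solving \eqref{equation sur ffi k}, needed because products of oscillating terms create non-oscillating sources that no transport equation can absorb), up to order $K\ge 4$, so that the true remainder $h_\lambda$ sits at order $\lambda^K$ and satisfies $\l\dr Z^Ih_\lambda\r_{L^2}\lesssim\e\lambda^{K-|I|}$; only then does the Klainerman--Sobolev loss of three derivatives leave the pointwise bounds with nonnegative powers of $\lambda$ (this is exactly where $K\ge4$ is used), allowing the bootstrap of Section \ref{section h} to close uniformly in $\lambda$. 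You mention the $K\ge4$ threshold in passing, but your actual scheme never builds the order-$2,3,4$ profiles or their triangular system, and without them the remainder equation you write cannot be closed with $\e_0$ independent of $\lambda$. The statement of Theorem \ref{premier thm} is then recovered from this finer decomposition by packaging everything from order $\lambda^2$ on into $\Tilde F_\lambda$, rather than by estimating $\Tilde F_\lambda$ directly.
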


\begin{remark}\label{limit}
The main feature of this theorem is that the smallness constant $\e_0$ does not depend on $\lambda$, which is allowed to take any value in $(0,1]$. In addition to the nature of the decomposition of $\Phi_\lambda$, this is at the heart of the high-frequency nature of the solution we construct: if we let $\lambda$ tends to 0, the high order norms of $\Phi_\lambda$ (precisely the $W^{\infty,p}$ norm for $p>1$) blows up but this is not compensated by letting $\e_0$ go to 0 as well. Therefore, the limit of $\Phi_\lambda$ as $\lambda$ tends to 0 is meaningful, and we have:
\begin{align*}
\Phi_\lambda& \to  \ffi,\quad\text{uniformly in $L^\infty$,} \\
\dr\Phi_\lambda& \rightharpoonup \dr\ffi,\quad\text{weakly in $L^2$.}
\end{align*}
\end{remark}

\section{Construction of the ansatz}\label{section Construction of the ansatz}

The heart of the proof of Theorem \ref{premier thm} is to construct a high-frequency ansatz for the solution $\Phi_\lambda$. The main purpose of this ansatz is to capture the creation of higher-order harmonics through the quadratic non-linearity. Those harmonics may also include non-oscillating terms. As explained in the introduction, we choose an ansatz with a \textit{half-chessboard} shape. More precisely we consider a solution $\Phi_\lambda$ of the form
\begin{equation}
    \Phi_{\lambda}=\ffi+ \sum_{2\leq k\leq K-1}\lambda^k\psi^{(k)} +\sum_{1\leq i\leq k \leq K}\lambda^kF^{(k,i)}\T_{\lambda}^{(k,i)}+h_{\lambda},\label{ansatz}
\end{equation}
where $K\geq 2$ is an integer and where we use the following notation for and $k,i\in\Z$: \begin{equation}
\T_{\lambda}^{(k,i)}=
\left \lbrace{ \begin{array}{cc}
\sin\left( \frac{i(t-r)}{\lambda} \right) & \textrm{ if } k+i\textrm{ is odd,} \\
\cos\left( \frac{i(t-r)}{\lambda} \right) & \textrm{ if } k+i\textrm{ is even.}
\end{array}} \right.\label{def trigo}
\end{equation}
Besides from being at the very heart of what we call an half-chessboard shaped ansatz, this notation is useful to capture the behaviours of the standard trigonometric functions regarding products and derivation (see Lemma \ref{technical lemma}). A bit of vocabulary: 
\begin{itemize}
\item $\ffi$ is called the \textit{background} solution,
\item the functions $\psi^{(\ell)}$ are called the \textit{non-oscillating} terms of the ansatz, whereas $F^{(k,i)} \T_\lambda^{(k,i)}$ are the \textit{oscillating} terms,
\item the function $h_\lambda$ is the remainder of the ansatz and is non-oscillating, however it mimics a high-frequency behaviour, i.e it loses one power of $\lambda$ for each derivatives, a fact clearly seen on the schematic estimates it will satisfy:
\begin{equation*}
\l\dr^m h_\lambda\r_{L^2}\lesssim \lambda^{K+1-m}\quad\text{for}\quad m\geq 1.
\end{equation*}
\end{itemize}

The degree of precision of the ansatz \eqref{ansatz} is given by the integer $K$. Our work is valid for all values of $K\geq 2$, but shows that global existence as a solution of \eqref{main equation} requires $K\geq 4$. As explained in the introduction, this is due to the Klainerman-Sobolev inequality.

\begin{remark}
Since we adopt the standard convention that sums over empty sets vanish,  there are no $\psi^{(\ell)}$ functions in the ansatz if $K=2$. However, one major interest of our result is precisely the freedom we have on $K$: even though we are interested in a \textit{lower} bound on $K$ to ensure the global existence of $\Phi_\lambda$, our proof also shows that we can describe the propagation of high-frequency waves with an arbitrary large precision.
\end{remark}

\begin{remark}\label{convention}
In the rest of the article we will use the following convention: $\psi^{(1)}=0$ and $F^{(k,i)}=0$ if $i>k$ or $k=0$. This will be useful in the sequel not to be too bothered with complicated sums.
\end{remark}

Our strategy of proof is to derive formally from $\Box\Phi_\lambda=Q(\dr\Phi_\lambda,\dr\Phi_\lambda)$ a system of equations for the unknowns $(\ffi,\psi^{(\ell)}, F^{(k,i)}, h_\lambda)$ and prove global existence for this system, called in the rest of this article the \textit{reduced system}. Let us start by stating a technical lemma summarizing some basic properties of the trigonometric functions $\T_\lambda^{(k,i)}$ regarding products, the wave operator and the null forms.

\begin{remark}
In the next lemma and in the rest of the article (including the technical next section), we won't write the numerical constants which appear in the computations, meaning for example that an equality like \eqref{trigo 1} has to be understood in the following sense: there exists real constants $A=A(k,\ell,i,j)$ and $B=B(k,\ell,i,j)$ such that 
\begin{equation*}
\T_{\lambda}^{(k,i)}\T_{\lambda}^{(\ell,j)}=A\T_{\lambda}^{(k+\ell,i+j)}+B\T_{\lambda}^{(k-\ell,i-j)}.
\end{equation*}
Though these numerical constants can depend on every indexes appearing in \eqref{ansatz}, they are not allowed to depend on $\lambda$, meaning that we will always keep trace of $\lambda$ in the formal computations.
\end{remark}

\begin{lem}\label{technical lemma}
Let $k,\ell,i,j\in\Z$, $f$ and $g$ any function and $Q$ any null form. We have $\T_{\lambda}^{(k,i)}=\T_{\lambda}^{(|k|,|i|)}$ and
\begin{equation}\label{trigo 1}
\T_{\lambda}^{(k,i)}\T_{\lambda}^{(\ell,j)}=\T_{\lambda}^{(k+\ell,i+j)}+\T_{\lambda}^{(k+\ell,i-j)},
\end{equation}
\begin{equation}\label{trigo 2}
\Box \left( g \T_{\lambda}^{(k,i)}\right)=\Box g\T_{\lambda}^{(k,i)}+\frac{1}{\lambda}\Ll g\T_{\lambda}^{(k-1,i)},
\end{equation}
\begin{equation}\label{trigo 3}
    Q\left(\dr\left(f\T_{\lambda}^{(k,i)}\right),\dr g \right)=Q(\dr f,\dr g)\T_{\lambda}^{(k,i)}+\frac{1}{\lambda}f\omega\tdr g\T_{\lambda}^{(k-1,i)},
\end{equation}
\begin{align}\label{trigo 4}
    Q\left(\dr\left(f\T_{\lambda}^{(k,i)}\right),\dr\left(g\T_{\lambda}^{(\ell,j)}\right) \right) & = Q(\dr f,\dr g)\T_{\lambda}^{(k,i)}\T_{\lambda}^{(\ell,j)}
    \\&\quad+\frac{1}{\lambda}f\omega\tdr g\T_{\lambda}^{(k-1,i)}\T_{\lambda}^{(\ell,j)}+\frac{1}{\lambda}g\omega\tdr f\T_{\lambda}^{(k,i)}\T_{\lambda}^{(\ell-1,j)},\nonumber
\end{align}
where $\Ll$ is the following transport operator
\begin{equation*}
\Ll\vcentcolon= \dr_t+\dr_r+\frac{1}{r}
\end{equation*}
and where $\omega\tdr$ stands for a linear combination of $\omega_i\tdr_\alpha$ for $i=1,2,3$ and $\alpha=0,1,2,3$.
\end{lem}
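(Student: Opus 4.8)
The plan is to verify each of the four identities by direct computation, starting from the definition \eqref{def trigo} and the elementary properties of sine and cosine, and carefully tracking where each power of $\lambda$ enters. The very first observation, $\T_\lambda^{(k,i)}=\T_\lambda^{(|k|,|i|)}$, follows immediately from the fact that $\cos$ is even and $\sin$ is odd together with the parity of $k+i$ being the same as that of $|k|+|i|$.

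\textbf{Products.} For \eqref{trigo 1} I would split into the four cases according to the parities of $k+i$ and $\ell+j$ and apply the product-to-sum formulas $\cos a\cos b=\frac12(\cos(a+b)+\cos(a-b))$, $\sin a\sin b=\frac12(\cos(a-b)-\cos(a+b))$, $\sin a\cos b=\frac12(\sin(a+b)+\sin(a-b))$ with $a=\tfrac{i(t-r)}{\lambda}$, $b=\tfrac{j(t-r)}{\lambda}$. In each case the resulting two terms have arguments $\tfrac{(i+j)(t-r)}{\lambda}$ and $\tfrac{(i-j)(t-r)}{\lambda}$, and one checks that $\cos$ versus $\sin$ matches exactly the parity prescription for the labels $(k+\ell,i+j)$ and $(k+\ell,i-j)$ (noting $k+\ell+i+j$ and $k+\ell+i-j$ have the same parity as $(k+i)+(\ell+j)$). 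Absorbing the $\frac12$'s into the unwritten constants, this gives \eqref{trigo 1}.

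\textbf{Wave operator.} For \eqref{trigo 2}, since $t-r$ solves the eikonal equation $m^{\alpha\beta}\dr_\alpha(t-r)\dr_\beta(t-r)=0$ (as recalled around \eqref{intro exemple}), the expansion \eqref{intro exemple} applied to $\cos$ or $\sin$ of $\tfrac{i(t-r)}{\lambda}$ has no $\lambda^{-2}$ term; the $\lambda^{-1}$ term carries the operator $2m^{\alpha\beta}\dr_\alpha(t-r)\dr_\beta+\Box(t-r)=-2\dr_0\dr(t-r)\cdots$, which on $u=t-r$ reduces to $-2(\dr_t+\dr_r+\tfrac1r)=-2\Ll$ acting on $g$, and differentiating $\T_\lambda^{(k,i)}$ once lowers the first label by one while keeping the second (turning $\cos$ into $-\sin$ and $\sin$ into $\cos$, consistent with the parity of $(k-1,i)$); the $\lambda^0$ term is $\Box g\,\T_\lambda^{(k,i)}$. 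The factor of $i$ produced by differentiating the phase is absorbed into the unwritten constants.

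\textbf{Null forms.} For \eqref{trigo 3} I expand $\dr(f\T_\lambda^{(k,i)})=\dr f\,\T_\lambda^{(k,i)}+\tfrac1\lambda f\,\dr(t-r)\,(\dr_\theta\T)_\lambda^{(k-1,i)}$ schematically. The bilinearity of $Q$ gives three terms: $Q(\dr f,\dr g)\T_\lambda^{(k,i)}$, a term $\tfrac1\lambda f\,Q(\dr(t-r),\dr g)\,\T_\lambda^{(k-1,i)}$, and — in \eqref{trigo 4} — symmetrically a term involving $\dr g$. The key point is that $\dr(t-r)$ is a null covector, so $Q(\dr(t-r),\dr g)$ is a null form contracted once with a null vector; the identity \eqref{good null form} then shows $|Q(\dr(t-r),\dr g)|\lesssim|\tdr g|$, and more precisely $Q(\dr(t-r),\dr g)$ is exactly a linear combination of $\omega_i\tdr_\alpha g$ (this is where the null structure is essential, and it is the analogue of \eqref{backreaction} vanishing). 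Writing this contraction as $\omega\tdr g$ gives \eqref{trigo 3}; for \eqref{trigo 4} one does the same expansion on both slots, and the would-be worst term $\tfrac1{\lambda^2}fg\,Q(\dr(t-r),\dr(t-r))$ vanishes identically because $Q$ is a null form, leaving only the two $\tfrac1\lambda$ terms. I expect the main (though still routine) obstacle to be the bookkeeping in \eqref{trigo 4}: keeping the parities of all labels consistent across the product $\T_\lambda^{(k-1,i)}\T_\lambda^{(\ell,j)}$ and convincing the reader that the $\lambda^{-2}$ term really is killed by the null condition rather than merely being small.
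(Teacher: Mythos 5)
Your proposal is correct and follows essentially the same route as the paper: product-to-sum formulas with parity bookkeeping for \eqref{trigo 1}, the eikonal equation for $t-r$ to kill the $\lambda^{-2}$ term and produce $\Ll$ for \eqref{trigo 2}, and the fact that $Q(\dr(t-r),\dr g)$ is exactly a combination of $\omega_i\tdr_\alpha g$ (which the paper verifies by the explicit computations for $Q_0$, $Q_{0i}$, $Q_{ij}$) for \eqref{trigo 3}, with \eqref{trigo 4} obtained by bilinearity and the vanishing of $Q(\dr(t-r),\dr(t-r))$. The only thing to add in a write-up is the short case-by-case check of that structural identity, which you correctly identify as the key point.
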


\begin{proof}
The proof of \eqref{trigo 1} is a direct rewriting with our notation \eqref{def trigo} of the usual linearization formulas
\begin{align*}
2\cos(a)\cos(b) & = \cos(a-b)+\cos(a+b),
\\ 2\sin(a)\cos(b) & = \sin(a-b)+\sin(a+b),
\\ 2\sin(a)\sin(b) & = \cos(a-b)-\cos(a+b).
\end{align*}
The proof of \eqref{trigo 2} follows from a direct computation, and even though $\Box$ is a second order operator, we don't lose two powers of $\lambda$ since the phase $t-r$ satisfies the eikonal equation for the Minkowski metric, i.e
\begin{align}
m^{\alpha\beta}\dr_\alpha(t-r)\dr_\beta(t-r)=0.\label{eikonal equation}
\end{align}
The proof of \eqref{trigo 3} follows from the computations
\begin{align*}
Q_0(\dr(t-r),\dr h) & = -2\tdr_0 h,
\\ Q_{0i}(\dr(t-r),\dr h) & = \tdr_i h + 2\omega_i\tdr_0 h,
\\ Q_{ij}(\dr(t-r),\dr h) & = \omega_j\tdr_i h -  \omega_i\tdr_j h ,
\end{align*}
(which holds for any scalar function $h$) and the definition of a null form as in Definition \ref{def null form}. 

The formula \eqref{trigo 4} follows directly from \eqref{trigo 3}. 
\end{proof}

\begin{remark}
Note that even though a null form only involves first order derivatives, we would expect some $\frac{1}{\lambda^2}$ terms in \eqref{trigo 4} since $Q(\dr u,\dr v)$ is a quadratic expression. It is not the case since null forms are precisely the quadratic forms vanishing on null vector fields in Minkowski space, and \eqref{eikonal equation} precisely means that the space-time gradient of $t-r$ is a null vector field.
\end{remark}

\begin{remark}
Since $|\dr^{|\alpha|}\omega_i|\lesssim 1$ in the region $\{ r\geq \frac{1}{R} \}$ for any multi-index $\alpha$ and since formulas \eqref{trigo 3} and \eqref{trigo 4} will be applied with $f$ supported in $A^R$,  the $\omega$ factors in \eqref{trigo 3} and \eqref{trigo 4} are irrelevant in our estimates and we omit them in the sequel.
\end{remark}

\subsection{High-frequency expansion of the semi-linear wave equation}

In this section, we plug the ansatz \eqref{ansatz} into the equation 
\begin{align}
\Box \Phi_\lambda = Q(\dr\Phi_\lambda,\dr\Phi_\lambda).\label{inter a}
\end{align}
The computations are rather tedious, and we present them in the form of two lemmas, one for the LHS of \eqref{inter a} and one for its RHS.

\subsubsection{Expansion of the wave operator}

We start with the RHS of \eqref{inter a}.

\begin{lem}
If $\Phi_\lambda$ is defined by \eqref{ansatz} we have
\begin{align*}
\Box \Phi_\lambda & = \sum_{0\leq p \leq K} \lambda^p \left( \Box \Phi_\lambda\right)^{(p)}
\end{align*}
with
\begin{align}
\left( \Box \Phi_\lambda\right)^{(0)} & = \Box\ffi + \Ll F^{(1,1)}\T_{\lambda}^{(0,1)}\label{Box 0}
\\ \left( \Box \Phi_\lambda\right)^{(K)} & = \sum_{1\leq i \leq K} \Box F^{(K,i)}\T_{\lambda}^{(K,i)}  + \frac{1}{\lambda^K}\Box h_{\lambda}\label{Box K}
\end{align}
and
\begin{align}
\left( \Box \Phi_\lambda\right)^{(p)} & = \sum_{1\leq i \leq p+1}\left( \Ll F^{(p+1,i)} + \Box F^{(p,i)}\right)\T_{\lambda}^{(p,i)}   + \Box\psi^{(p)} \label{Box p}
\end{align}
for $1\leq p \leq K-1$.
\end{lem}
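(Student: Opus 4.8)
The plan is to substitute the ansatz \eqref{ansatz} into $\Box\Phi_\lambda$ and collect terms according to their power of $\lambda$, using only the linearity of $\Box$ together with the two key identities \eqref{trigo 2} (for the oscillating pieces) and the trivial identity $\Box(\lambda^k g)=\lambda^k\Box g$ (for the non-oscillating pieces $\varphi$, $\psi^{(k)}$, $h_\lambda$). Writing $\Phi_\lambda=\varphi+\sum_{2\le k\le K-1}\lambda^k\psi^{(k)}+\sum_{1\le i\le k\le K}\lambda^kF^{(k,i)}\T_\lambda^{(k,i)}+h_\lambda$, applying $\Box$ termwise gives $\Box\varphi$ contributing at order $0$; each $\lambda^k\psi^{(k)}$ contributing $\lambda^k\Box\psi^{(k)}$ at order $k$; and $h_\lambda$ contributing $\Box h_\lambda$, which since we normalize the expansion to run from $0$ to $K$ we rewrite as $\lambda^K\cdot\lambda^{-K}\Box h_\lambda$ and place at order $K$ (this is the bookkeeping behind the $\frac{1}{\lambda^K}\Box h_\lambda$ in \eqref{Box K}).

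The only nontrivial step is handling the oscillating sum. For each pair $1\le i\le k\le K$, identity \eqref{trigo 2} gives
\begin{align*}
\Box\left(\lambda^k F^{(k,i)}\T_\lambda^{(k,i)}\right)=\lambda^k\Box F^{(k,i)}\T_\lambda^{(k,i)}+\lambda^{k-1}\Ll F^{(k,i)}\T_\lambda^{(k-1,i)}.
\end{align*}
The first term sits at order $k$ and carries the phase $\T_\lambda^{(k,i)}$; the second sits at order $k-1$ and carries the phase $\T_\lambda^{(k-1,i)}$. So at a fixed order $p$ the oscillating contributions are: $\Box F^{(p,i)}\T_\lambda^{(p,i)}$ coming from the term with $(k,i)=(p,i)$, and $\Ll F^{(p+1,i)}\T_\lambda^{(p,i)}$ coming from the term with $(k,i)=(p+1,i)$. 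Here one uses the sign-parity bookkeeping built into \eqref{def trigo}: the index shift $k\mapsto k-1$ in $\T_\lambda^{(k-1,i)}$ is exactly the swap $\sin\leftrightarrow\cos$ that \eqref{trigo 2} produces (this is why $\Ll$ rather than $\Box$ appears — \eqref{trigo 2} absorbs the resonant $\lambda^{-1}$ term using the eikonal equation \eqref{eikonal equation}), and the phases line up: $\T_\lambda^{(p,i)}$ appears with coefficient $\Ll F^{(p+1,i)}+\Box F^{(p,i)}$.

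Then I would simply read off the three cases. At order $p=0$: the non-oscillating part is $\Box\varphi$, and the only oscillating contribution is $\Ll F^{(1,1)}\T_\lambda^{(0,1)}$ (the term $\Box F^{(0,i)}$ is absent by the convention $F^{(0,i)}=0$ of Remark \ref{convention}, and $F^{(1,i)}=0$ for $i>1$), giving \eqref{Box 0}. At order $1\le p\le K-1$: the non-oscillating part is $\Box\psi^{(p)}$, and the oscillating contributions are $\sum_{1\le i\le p+1}\left(\Ll F^{(p+1,i)}+\Box F^{(p,i)}\right)\T_\lambda^{(p,i)}$, where the range $1\le i\le p+1$ comes from combining the constraint $i\le p+1$ (from the $F^{(p+1,i)}$ term) with $i\le p$ (from the $F^{(p,i)}$ term, which just vanishes when $i=p+1$), giving \eqref{Box p}. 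At order $p=K$: there is no $\Ll F^{(K+1,i)}$ term since the ansatz stops at $k=K$, so only $\sum_{1\le i\le K}\Box F^{(K,i)}\T_\lambda^{(K,i)}$ survives, together with the remainder contribution $\frac{1}{\lambda^K}\Box h_\lambda$, giving \eqref{Box K}. There is no real obstacle here beyond careful index tracking; the substantive input — that the second-order operator $\Box$ produces only a $\lambda^{-1}$ loss and not $\lambda^{-2}$ — is exactly the content of \eqref{trigo 2}, already established via the eikonal equation.
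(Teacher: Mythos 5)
Your proposal is correct and is exactly the argument the paper intends: the paper leaves this lemma to the reader as a direct application of \eqref{trigo 2} together with the convention of Remark \ref{convention}, which is precisely how you proceed (termwise application of $\Box$, the $\lambda^{k-1}\Ll F^{(k,i)}\T_\lambda^{(k-1,i)}$ shift from \eqref{trigo 2}, and the conventions $F^{(0,i)}=0$, $F^{(k,i)}=0$ for $i>k$, $\psi^{(1)}=0$ to handle the boundary orders $p=0$ and $p=K$). No gaps.
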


The proof of this lemma is left to the reader and is an application of \eqref{trigo 2} together with the convention stated in Remark \ref{convention}.  

\subsubsection{Expansion of the null form}

\begin{lem}
If $\Phi_\lambda$ is defined by \eqref{ansatz} we have
\begin{align*}
Q(\dr\Phi_\lambda ,\dr\Phi_\lambda ) & = \sum_{0\leq k \leq K-1} \lambda^kQ^{(k)} + \lambda^K Q^{(\geq K)}
\end{align*}
with 
\begin{align}
Q^{(0)} & = Q(\dr\ffi ,\dr\ffi ) + F^{(1,1)}\tdr\ffi \T_\lambda^{(0,1)},\label{Q 0}
\\ Q^{(\geq K)} & = \frac{1}{\lambda^K}\mathcal{N}(h_\lambda) +  Q(\dr F,\dr\ffi)\T_{\lambda}\label{Q K}
\\&\quad + \lambda^{+} \left( F\tdr F  \T_{\lambda} + Q(\dr F,\dr F) \T_{\lambda} + Q(\dr F,\dr\psi)\T_{\lambda} + F\tdr\psi\T_{\lambda} + Q(\dr\psi,\dr\psi) \right),\nonumber
\end{align}
and
\begin{align}
&Q^{(p)} \nonumber
\\& = Q(\dr\psi^{(p)},\dr\ffi)  + \sum_{2\leq k \leq p-2} Q(\dr\psi^{(k)},\dr\psi^{(p-k)}) \label{Q p}
\\&\; + \sum_{1\leq i\leq p +1}\left( F^{(p+1,i)}\tdr\ffi  +  Q(\dr F^{(p,i)},\dr\ffi)\right)\T_{\lambda}^{(p,i)}  \nonumber
\\&\; + \sum_{\substack{1\leq i \leq k\leq p\\1\leq j \leq p-k+1}}\left( Q(\dr F^{(k,i)},\dr F^{(p-k,j)}) + F^{(k,i)}\tdr F^{(p-k+1,j)}+F^{(p-k+1,j)}\tdr F^{(k,i)}\right)\left( \T_{\lambda}^{(p,i+j)} + \T_{\lambda}^{(p,i-j)} \right)\nonumber
\\&\; +  \sum_{\substack{0\leq k \leq p-1\\1\leq i \leq k +1}}\T_{\lambda}^{(k,i)}\left( F^{(k+1,i)}\tdr\psi^{(p-k)} +Q(\dr F^{(k,i)},\dr\psi^{(p-k)})\right)\nonumber
\end{align}
for $1\leq p \leq K-1$, and where $\mathcal{N}(h_\lambda)$ is defined in \eqref{N(h)}.
\end{lem}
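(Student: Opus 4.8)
The plan is to expand $Q(\dr\Phi_\lambda,\dr\Phi_\lambda)$ by bilinearity, grouping the sixteen types of pairings coming from the five blocks of the ansatz \eqref{ansatz} --- $\ffi$, the non-oscillating $\sum\lambda^k\psi^{(k)}$, the oscillating $\sum\lambda^kF^{(k,i)}\T_\lambda^{(k,i)}$, and the remainder $h_\lambda$ --- and then reading off the coefficient of each power $\lambda^p$. Since $Q$ is bilinear and symmetric, the cross terms come with combinatorial constants which, by the convention stated in the excerpt, we suppress. The two genuinely useful tools are Lemma \ref{technical lemma}: formula \eqref{trigo 3} handles a product of one oscillating factor against a smooth factor (turning $Q(\dr(f\T_\lambda^{(k,i)}),\dr g)$ into $Q(\dr f,\dr g)\T_\lambda^{(k,i)}+\lambda^{-1}f\,\omega\tdr g\,\T_\lambda^{(k-1,i)}$), and formula \eqref{trigo 4} handles the product of two oscillating factors. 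Crucially, because $\dr(t-r)$ is null and $Q$ is a null form, neither formula produces a $\lambda^{-2}$ term: the worst loss is a single $\lambda^{-1}$, accompanied by a \emph{good} derivative $\tdr$. This is what keeps the hierarchy triangular and is the whole point of the null structure.

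The bookkeeping then proceeds as follows. First I would treat the $h_\lambda$ contributions: all terms involving $h_\lambda$ (namely $Q(\dr h_\lambda,\dr h_\lambda)$, $Q(\dr h_\lambda,\dr\ffi)$, $Q(\dr h_\lambda,\dr\psi^{(k)})$, and $Q(\dr h_\lambda,\dr(F^{(k,i)}\T_\lambda^{(k,i)}))$) get collected into the single symbol $\mathcal{N}(h_\lambda)$ defined in \eqref{N(h)}; since $h_\lambda=\GO{\lambda^K}$ these sit at order $\lambda^K$, contributing $\lambda^{-K}\mathcal{N}(h_\lambda)$ to $Q^{(\geq K)}$ after factoring out $\lambda^K$. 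Next, the purely non-oscillating interactions $Q(\dr\psi^{(k)},\dr\psi^{(k')})$ and $Q(\dr\ffi,\dr\psi^{(k)})$ live at orders $\lambda^{k+k'}$ and $\lambda^{k}$ respectively, and with $\psi^{(0)}\vcentcolon=\ffi$ and the convention $\psi^{(1)}=0$ one reads off the first two lines of \eqref{Q p} and the leading $Q(\dr\ffi,\dr\ffi)$ in \eqref{Q 0}. Then the mixed oscillating/non-oscillating pairings: applying \eqref{trigo 3} to $Q(\dr(F^{(k,i)}\T_\lambda^{(k,i)}),\dr\psi^{(m)})$ yields a term at order $\lambda^{k+m}$ carrying $\T_\lambda^{(k,i)}$ and a term at order $\lambda^{k+m-1}$ carrying $\T_\lambda^{(k-1,i)}F^{(k,i)}\tdr\psi^{(m)}$ --- the latter, reindexed, is exactly why $F^{(k+1,i)}\tdr\psi^{(p-k)}$ appears in the last line of \eqref{Q p}, and likewise the interaction with $\ffi$ produces the $F^{(p+1,i)}\tdr\ffi+Q(\dr F^{(p,i)},\dr\ffi)$ line, matching \eqref{Q 0} at $p=0$ after observing $\T_\lambda^{(0,1)}$ is $\cos$ and reindexing. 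Finally the oscillating/oscillating pairings: \eqref{trigo 4} applied to $Q(\dr(F^{(k,i)}\T_\lambda^{(k,i)}),\dr(F^{(m,j)}\T_\lambda^{(m,j)}))$ gives a $\lambda^{k+m}$ term $Q(\dr F^{(k,i)},\dr F^{(m,j)})\T_\lambda^{(k,i)}\T_\lambda^{(m,j)}$ and two $\lambda^{k+m-1}$ terms of the form $F\tdr F\,\T_\lambda^{(k-1,i)}\T_\lambda^{(m,j)}$; linearizing the products via \eqref{trigo 1} replaces $\T_\lambda^{(k,i)}\T_\lambda^{(m,j)}$ by $\T_\lambda^{(k+m,i+j)}+\T_\lambda^{(k+m,i-j)}$, which is the origin of the $(\T_\lambda^{(p,i+j)}+\T_\lambda^{(p,i-j)})$ factor in the fourth line of \eqref{Q p}. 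One then checks that the total power of $\T_\lambda$ in every term at order $\lambda^p$ has first index $p$, so the half-chessboard parity is respected.

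The main obstacle is not any single estimate but the \emph{reindexing and range-matching}: after each application of \eqref{trigo 3}/\eqref{trigo 4} the trigonometric superscript drops by one while the $\lambda$-power also drops by one, so a term "born" from $F^{(k,i)}$ at index $(k,i)$ lands in $Q^{(p)}$ with $p=k+\ell-1$ (for some companion index $\ell$) but written against $\T_\lambda^{(p,\cdot)}$ --- one must substitute $k\mapsto k$, the companion order so that the summation ranges in \eqref{Q p} ($2\le k\le p-2$ for $\psi$–$\psi$; $1\le i\le k\le p$ with $1\le j\le p-k+1$ for $F$–$F$; $0\le k\le p-1$ with $1\le i\le k+1$ for $F$–$\psi$) come out exactly right, using the convention $F^{(k,i)}=0$ for $i>k$ or $k=0$ and $\psi^{(0)}=\ffi$, $\psi^{(1)}=0$ to absorb all the boundary cases. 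The terms that fall outside $0\le p\le K-1$, i.e.\ those with total weight $\geq K$ coming from products of two $\lambda^{+}$-order factors or from $h_\lambda$, are swept into $\lambda^K Q^{(\geq K)}$ and one only needs to identify the \emph{leading} piece there, namely $\lambda^{-K}\mathcal{N}(h_\lambda)+Q(\dr F,\dr\ffi)\T_\lambda$, the remaining genuinely-subleading products being abbreviated by the $\lambda^{+}(\dots)$ shorthand in \eqref{Q K}. I expect the proof in the paper to say essentially "this follows from Lemma \ref{technical lemma}, \eqref{trigo 1}, the conventions of Remark \ref{convention}, and careful bookkeeping", and indeed that is the honest content: no analysis, only algebra, but algebra whose index management is the delicate point.
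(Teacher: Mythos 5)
Your proposal is correct and follows essentially the same route as the paper: a direct bilinear expansion of $Q(\dr\Phi_\lambda,\dr\Phi_\lambda)$ using \eqref{trigo 1}, \eqref{trigo 3}, \eqref{trigo 4}, collecting all $h_\lambda$-terms into $\mathcal{N}(h_\lambda)$, then regrouping by powers of $\lambda$ via the same reindexing of double sums (with the conventions of Remark \ref{convention} absorbing boundary cases) and sweeping everything of order $\geq K$ into $\lambda^K Q^{(\geq K)}$. The only cosmetic difference is your auxiliary convention $\psi^{(0)}\vcentcolon=\ffi$, which the paper does not use but which changes nothing.
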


\begin{remark}
In this lemma, we use the following notations for clarity:
\begin{itemize}
\item $F$ or $\psi$ denotes linear combinations of terms of the families $F^{(k,i)}$ or $\psi^{(\ell)}$,
\item $\T_\lambda$ denotes linear combinations of terms of the family $\T^{(k,i)}_\lambda$,
\item in front of a linear combination, the symbol $\lambda^+$ means that the coefficients of the linear combination contains some positive powers of $\lambda$.
\end{itemize}
\end{remark}

\begin{proof}
Using \eqref{trigo 3}-\eqref{trigo 4}, we obtain by direct expansion
\begin{align}
Q(\dr\Phi_\lambda ,\dr\Phi_\lambda ) & = Q(\dr\ffi ,\dr\ffi )  + Q(\dr h_{\lambda},\dr h_{\lambda}) + Q(\dr h_{\lambda},\dr\ffi) + \lambda^+Q(\dr(F\T_{\lambda}),\dr h_{\lambda}) + \lambda^+Q(\dr\psi,\dr h_{\lambda})\nonumber
\\&\quad +\sum_{1\leq i\leq k \leq K}\lambda^k Q(\dr F^{(k,i)},\dr\ffi)\T_{\lambda}^{(k,i)} +\sum_{1\leq i\leq k \leq K}\lambda^{k-1}F^{(k,i)}\tdr\ffi\T_{\lambda}^{(k-1,i)}\nonumber
\\&\quad + \sum_{2\leq k\leq K-1}\lambda^kQ(\dr\psi^{(k)},\dr\ffi) \nonumber
\\&\quad + \sum_{\substack{1\leq k,\ell\leq K\\1\leq i\leq k\\ 1\leq j\leq \ell}}\lambda^{k+\ell-1}\left(F^{(k,i)}\tdr F^{(\ell,j)}+F^{(\ell,j)}\tdr F^{(k,i)}\right)\left( \T_{\lambda}^{(k+\ell -1,i+j)} + \T_{\lambda}^{(k+\ell - 1,i-j)} \right)\label{brutal expansion}
\\&\quad  +  \sum_{\substack{1\leq k,\ell\leq K\\1\leq i\leq k\\ 1\leq j\leq \ell}}\lambda^{k+\ell}Q(\dr F^{(k,i)},\dr F^{(\ell,j)})\left( \T_{\lambda}^{(k+\ell,i+j)}+\T_{\lambda}^{(k+\ell,i-j)} \right)\nonumber
\\&\quad +\sum_{\substack{1\leq i\leq k\leq K\\2\leq \ell\leq K-1}}\lambda^{k+\ell}Q(\dr F^{(k,i)},\dr\psi^{(\ell)})\T_{\lambda}^{(k,i)}+\sum_{\substack{1\leq i\leq k\leq K\\2\leq \ell\leq K-1}}\lambda^{k+\ell-1}F^{(k,i)}\tdr\psi^{(\ell)}\T_{\lambda}^{(k-1,i)}\nonumber
\\&\quad +\sum_{2\leq k,\ell\leq K-1}\lambda^{k+\ell}Q(\dr\psi^{(k)},\dr\psi^{(\ell)}) \nonumber
\end{align}
Note that in order to simplify the terms $F\tdr F$ we used \eqref{trigo 1} and its consequence
\begin{align*}
\T_{\lambda}^{(k-1,i)}\T_{\lambda}^{(\ell,j)} = \T_{\lambda}^{(k,i)}\T_{\lambda}^{(\ell-1,j)} = \T_{\lambda}^{(k+\ell -1,i+j)} + \T_{\lambda}^{(k+\ell - 1,i-j)}.
\end{align*}
Now we want to regroup terms by their $\lambda$ powers.  For this we use the following facts
\begin{align*}
\sum_{1\leq k,\ell\leq K}\lambda^{k+\ell}a_{k,\ell} & = \sum_{2\leq p \leq K-1}\lambda^p\sum_{1\leq k \leq p-1}a_{k,p-k} + \GO{\lambda^K}
\\ \sum_{\substack{1\leq  k\leq K\\2\leq \ell\leq K-1}}\lambda^{k+\ell} a_{k,\ell}  & = \sum_{3\leq p \leq K-1}\lambda^p \sum_{1\leq k \leq p-2}a_{k,p-k} + \GO{\lambda^K}
\\ \sum_{2\leq k,\ell\leq K-1}\lambda^{k+\ell}a_{k,\ell} & = \sum_{4\leq p \leq K-1} \lambda^p\sum_{2\leq k \leq p-2} a_{k,p-k}   + \GO{\lambda^K}
\end{align*}
which holds for every families $(a_{k,\ell})_{(k,\ell)\in\N^2}$.  Therefore if $1\leq p \leq K-1$ we have
\begin{align*}
Q^{(p)} & =  \sum_{1\leq i\leq p } Q(\dr F^{(p,i)},\dr\ffi)\T_{\lambda}^{(p,i)} + \sum_{1\leq i\leq p +1}F^{(p+1,i)}\tdr\ffi\T_{\lambda}^{(p,i)}  + Q(\dr\psi^{(p)},\dr\ffi) 
\\&\quad + \sum_{\substack{1\leq i \leq k\leq p-1\\1\leq j \leq p-k}}Q(\dr F^{(k,i)},\dr F^{(p-k,j)})\left( \T_{\lambda}^{(p,i+j)}+\T_{\lambda}^{(p,i-j)} \right) 
\\&\quad + \sum_{1\leq i \leq k\leq p-1}Q(\dr F^{(k,i)},\dr\psi^{(p-k)})\T_{\lambda}^{(k,i)} + \sum_{2\leq k \leq p-2} Q(\dr\psi^{(k)},\dr\psi^{(p-k)}) 
\\&\quad +  \sum_{\substack{1 \leq i \leq k\leq p \\ 1 \leq j \leq p-k+1}}\left(F^{(k,i)}\tdr F^{(p-k+1,j)}+F^{(p-k+1,j)}\tdr F^{(k,i)}\right)\left( \T_{\lambda}^{(p,i+j)} + \T_{\lambda}^{(p,i-j)} \right)
\\&\quad +  \sum_{1\leq i \leq k\leq p-1}F^{(k,i)}\tdr\psi^{(p-k+1)}\T_{\lambda}^{(k-1,i)}
\end{align*}
where we used several times our convention on $\psi^{(1)}$ and on sums over empty sets. If we regroup terms according to their oscillating behaviour in this expression we obtain \eqref{Q p}.

It remains to compute $Q^{(0)}$ and $Q^{(\geq K)}$. For $Q^{(0)}$, we simply look at \eqref{brutal expansion}. For $Q^{(\geq K)}$, we don't need to be precise in terms of the oscillating behaviour and the hierarchy since $\Box h_\lambda$ will absorb everything. This explains the expression of the lemma, with the following definition:
\begin{align}
\mathcal{N}(h_\lambda) = Q(\dr h_{\lambda},\dr h_{\lambda}) + Q(\dr h_{\lambda},\dr\ffi) + \lambda^+Q(\dr(F\T_{\lambda}),\dr h_{\lambda}) + \lambda^+Q(\dr\psi,\dr h_{\lambda}).\label{N(h)}
\end{align}
\end{proof}

\subsection{The reduced system}

As the two previous lemmas show, solving
\begin{equation*}
\Box\Phi_\lambda = Q(\dr\Phi_\lambda ,\dr\Phi_\lambda )
\end{equation*}
is equivalent to solving the following high-frequency hierarchy:
\begin{align}
(\Box \Phi_\lambda)^{(p)}= Q^{(p)}\label{hierarchy middle}
\end{align}
for $0\leq p \leq K-1$ and 
\begin{align}
(\Box \Phi_\lambda)^{( K)}= Q^{(\geq K)}.\label{hierarchy end}
\end{align}
Since $h_\lambda$ is not-oscillating, \eqref{hierarchy end} simply rewrites as a semi-linear wave equation with oscillating coefficients:
\begin{align*}
\Box h_\lambda & =   \mathcal{N}(h_\lambda) + \lambda^K Q(\dr F,\dr\ffi)\T_{\lambda} + \lambda^K \Box F \T_\lambda
\\&\quad +\lambda^K \lambda^{+} \left( F\tdr F  \T_{\lambda} + Q(\dr F,\dr F) \T_{\lambda} + Q(\dr F,\dr\psi)\T_{\lambda} + F\tdr\psi\T_{\lambda} + Q(\dr\psi,\dr\psi) \right)
\end{align*}

For the remaining equations of order $\lambda^p$ for $0\leq p \leq K-1$, note that both the LHS and RHS contain non-oscillating and oscillating terms, which we have to identify precisely. 

For $p=0$ it is quite straightforward: the non-oscillating part can be absorbed by the background equation
\begin{align*}
\Box \ffi=Q(\dr\ffi,\dr\ffi)
\end{align*}
and the part oscillating like $\T_\lambda^{(0,1)}$ is absorbed thanks to a transport equation for $F^{(1,1)}$:
\begin{align*}
\Ll F^{(1,1)}=F^{(1,1)}\tdr\ffi.
\end{align*}

If $1\leq p \leq K-1$, the distribution between oscillating and non-oscillating terms is less clear, especially since the interaction between the different $F^{(k,i)}$ leads to the creation of non-oscillating terms. This is a consequence of the presence of $\T_{\lambda}^{(p,i-j)}$ in \eqref{Q p}, which is non-oscillating if and only if $p$ is even and $i=j$. These created non-oscillating terms are the motivation behind the terms $\psi^{(\ell)}$, indeed they are absorbed by $\Box\psi^{(\ell)}$ present in \eqref{Box p}. Therefore, since $p$ has to be even so that $\T_{\lambda}^{(p,i-j)}$ is non-oscillating, we only need $\psi^{(\ell)}$ for $\ell$ even and we take $\psi^{(\ell)}=0$ for $\ell$ odd. 

This has the following nice consequence: in the last term of $Q^{(p)}$ (for $1\leq p \leq K-1$) 
\begin{align*}
\sum_{\substack{0\leq k \leq p-1\\1\leq i \leq k +1}}\T_{\lambda}^{(k,i)}\left( F^{(k+1,i)}\tdr\psi^{(p-k)} +Q(\dr F^{(k,i)},\dr\psi^{(p-k)})\right)
\end{align*}
we now impose that $p-k$ is even, which implies $\T_{\lambda}^{(k,i)}=\T_{\lambda}^{(p,i)}$ (since $p$ and $k$ have the same parity).  Therefore, the oscillating terms in $Q^{(p)}$ are all of the form $\T_{\lambda}^{(p,i)}$ for $1\leq i \leq p+1$ and we can rewrite $Q^{(p)}$ in the following more condensed form
\begin{align}
Q^{(p)} & = \sum_{1\leq i \leq p+1}Q^{(p,i)}\T_{\lambda}^{(p,i)} + \pi Q^{(p)} \label{Q p bis}
\end{align}
where
\begin{align*}
Q^{(p,i)} & = F^{(p+1,i)}\tdr\ffi + Q(\dr F^{(\leq p)},\dr\ffi) + Q(\dr F^{(\leq p)} , \dr F^{(\leq p)}) 
\\&\quad + F^{(\leq p)}\tdr F^{(\leq p)} + F^{(\leq p)}\tdr\psi^{(\leq p)} + Q(\dr F^{(\leq p)}, \dr \psi^{(\leq p)})
\end{align*}
and
\begin{align*}
\pi Q^{(p)} = Q(\dr \psi^{(p)},\dr\ffi) + Q(\dr \psi^{(\leq p-2)} , \dr \psi^{(\leq p-2)}) + Q(\dr F^{(\leq p)} , \dr F^{(\leq p)}) + F^{(\leq p)}\tdr F^{(\leq p)}.
\end{align*}

\begin{remark}
Here, we used the useful notation $\psi^{(\leq i)}$ to denote any $\psi^{(j)}$ for $j\leq i$.  Samewise, $F^{(\leq k)}$ denotes any $F^{(\ell,i)}$ for $1\leq i\leq\ell\leq k$.
\end{remark}

Recall that our previous remark on the presence of non-oscillating terms basically means that $\pi Q^{(p)} =0$ if $p$ is odd. Thanks to the decomposition \eqref{Q p bis}, we see that both sides of \eqref{hierarchy middle} oscillate in a similar manner, and we can assume the following equations on $\psi^{(p)}$ and $F^{(p+1,i)}$:
\begin{align*}
\Ll F^{(p+1,i)} + \Box F^{(p,i)} = Q^{(p,i)} \quad\text{and}\quad \Box\psi^{(p)} = \pi Q^{(p)}.
\end{align*}

By collecting the results of this discussion, we define the \textit{reduced system} for the unknowns $(\ffi,\psi^{(\ell)},F^{(k,i)},h_\lambda)$ to be
\begin{align}
\Box\ffi & = Q(\dr \ffi, \dr\ffi),\label{equation sur ffi}\tag{\textbf{BG}}
\\ \Ll F^{(k,i)} & = \Box F^{(k-1)} + F^{(k,i)}\tdr\ffi + Q(\dr F^{(\leq k-1)},\dr\ffi) + Q(\dr F^{(\leq k-1)} , \dr F^{(\leq k-1)}) \label{equation sur F ki}\tag{\textbf{T}$(k,i)$}
\\&\quad + F^{(\leq k-1)}\tdr F^{(\leq k-1)} + F^{(\leq k-1)}\tdr\psi^{(\leq k-1)} + Q(\dr F^{(\leq k-1)}, \dr \psi^{(\leq k-1)}),\nonumber
\\ \Box\psi^{(k)} & = Q(\dr \psi^{(k)},\dr\ffi) + Q(\dr \psi^{(\leq k-2)} , \dr \psi^{(\leq k-2)}) + Q(\dr F^{(\leq k)} , \dr F^{(\leq k)}) + F^{(\leq k)}\tdr F^{(\leq k)},\label{equation sur ffi k}\tag{\textbf{W}$(k)$}
\\ \Box h_\lambda & =   \mathcal{N}(h_\lambda) + \lambda^K Q(\dr F,\dr\ffi)\T_{\lambda} + \lambda^K \Box F \T_\lambda\label{equation sur h}\tag{\textbf{W}}
\\&\quad +\lambda^K \lambda^{+} \left( F\tdr F  \T_{\lambda} + Q(\dr F,\dr F) \T_{\lambda} + Q(\dr F,\dr\psi)\T_{\lambda} + F\tdr\psi\T_{\lambda} + Q(\dr\psi,\dr\psi) \right)\nonumber.
\end{align}
The initial data for $(\ffi,\psi^{(\ell)},F^{(k,i)},h_\lambda)$ are given by
\begin{align}\label{initial data}
    (\ffi,\dr_t\ffi)_{|t=0}&=(\ffi_0,\ffi_1),\nonumber \\
    F^{(k,i)}_{\;\;\;\quad|t=0}&=\left\{ 
\begin{array}{ll}
  F_0 & \;\; \text{if $(k,i)=(1,1)$}\\
  0 & \;\;\text{if $2\leq k\leq K$,}\\ \end{array} \right.\\
  (\psi^{(k)},\dr_t\psi^{(k)})_{|t=0}&=(0,0),\nonumber\\
  (h_{\lambda},\dr_t h_{\lambda})_{|t=0}&=(0,0).\nonumber
\end{align} 
We denote by $S$ the system of equation \eqref{equation sur ffi}-\eqref{equation sur F ki}-\eqref{equation sur ffi k}-\eqref{equation sur h} where $k$ and $i$ take all the possible values such that $F^{(k,i)}$ or $\psi^{(k)}$ are well-defined. 

\begin{remark}
Note that in this system, we follow again the convention never to write explicitly the numerical constants appearing in the expansion. But since they appear in Theorem \ref{premier thm}, let us recall the exact equations satisfied by $\ffi$ and $F^{(1,1)}$:
\begin{equation*}
\Box \ffi_i = Q_i(\dr \ffi,\dr \ffi)\quad\text{and}\quad \Ll F^{(1,1)}_i=\half \sum_{k,\ell=1,\dots,d}F^{(1,1)}_kQ_i(\dr (t-r),\dr\ffi_\ell),
\end{equation*}
where $Q_i$ are the null forms of the main system for $\Phi_\lambda$.
\end{remark}

\begin{remark}
As we explained in the discussion before the definition of the reduced system, the functions $\psi^{(k)}$ only exist for even $k$ but for the sake of clarity, it seems useful to forget this and consider that we want to solve the equations \eqref{equation sur ffi k} for all values of $k$ between 2 and $K-1$.
\end{remark}

We see that there is a coupling between \eqref{equation sur F ki} and \eqref{equation sur ffi k} but with a particular triangular structure: the RHS of \eqref{equation sur F ki} involves only $F^{(\ell+1,j)}$ and $\psi^{(\ell)}$ for $\ell\leq k-1$, and the RHS of \eqref{equation sur ffi k} involves only $F^{(\ell,j)}$ and $\psi^{(\ell)}$ for $\ell\leq k$. This is a consequence of our choice of ansatz \eqref{ansatz} and it allows us to solve this system with the following strategy:
\begin{enumerate}
\item We first solve \eqref{equation sur ffi}, this is done in Section \ref{section ffi}.
\item We solve (\textbf{T}(1,1)), which initiates a strong induction argument:
\begin{enumerate}
\item if we know $F^{(\ell,j)}$ and $\psi^{(\ell)}$ for $1\leq j\leq \ell\leq k$, we can solve $(\textbf{T}(k+1,i))$ for $1\leq i\leq k+1$...
\item ... and then solve $(\textbf{W}(k+1))$.
\end{enumerate}
This is done in Section \ref{section F ffi}.
\item Finally in Section \ref{section h} we solve \eqref{equation sur h}.
\end{enumerate}
Following this strategy, we prove the following theorem about the reduced system $S$:

\begin{thm}\label{theo principal}
Let $K\geq 4$, $N\geq 6+5K$, $R>0$, $-\half <\delta<\half$, $0<\alpha< 1$, $(\ffi_0,\ffi_1)\in H^{N+1}_{\delta}\times H^{N}_{\delta+1}$ and $F_0\in C^{N-4}$ with support in $A^R_0$ such that
\begin{equation}
    \| F_0\|_{C^{N-4}}+\l\ffi_0\r_{H^{N+1}_{\delta}}+\l\ffi_1\r_{H^{N}_{\delta+1}}\leq \e.\label{smallness assumptions}
\end{equation}
There exists $\e_0=\e_0(K,N,R,\delta)>0$ such that if $\e\leq\e_0$ and $0<\lambda\leq 1$ there exists a global solution $(\ffi,\psi^{(\ell)}, F^{(k,i)}, h_\lambda)$ to the reduced system $S$ with initial data as in \eqref{initial data}. Moreover, the following estimates hold: 
\begin{itemize}
\item $\ffi\in H^{N+1}$ satisfies 
\begin{align*}
\l w^{\frac{1}{2}}\dr Z^I\ffi\r_{L^2}&\lesssim \e\qquad \text{for}\quad|I|\leq N,
\end{align*}
with
\begin{equation*}
    w(q)\vcentcolon=\left\lbrace
\begin{array}{c}
1+(1+|q|)^{-\alpha}\quad\text{for}\quad q<0\\
1+(1+|q|)^{2(\delta+1)}\quad\text{for}\quad q>0
\end{array}\right.,
\end{equation*}
\item $F^{(k,i)}\in C^{N+4-5K}$ is supported in $A^R$ and satisfies 
\begin{equation*}
|Z^IF^{(k,i)}|\lesssim   \frac{\e}{(1+r)},\qquad\text{for}\quad |I|\leq N+4-5K,
\end{equation*}
\item $\psi^{(\ell)}\in H^{N+9-5K}$ is supported in $B^R$ and satisfies
\begin{equation*}
    \l \dr Z^I\psi^{(\ell)} \r_{L^2}\lesssim\e,\qquad\text{for}\quad |I|\leq N+8-5K,
\end{equation*}
\item $h_{\lambda}\in H^{N+3-5K}$ is supported in $B^R$ and satisfies 
\begin{equation*}
\l \dr Z^I h_\lambda\r_{L^2}\lesssim \e\lambda^{K-|I|},\qquad\text{for}\quad |I|\leq N+2-5K.
\end{equation*}
\end{itemize}
\end{thm}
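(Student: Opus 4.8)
The proof proceeds by following the triangular structure of the reduced system $S$ exactly as outlined in the three-step strategy: first solve the background equation \eqref{equation sur ffi}, then solve the coupled transport/wave hierarchy \eqref{equation sur F ki}--\eqref{equation sur ffi k} by strong induction on the order $k$, and finally close a bootstrap argument for the remainder $h_\lambda$ satisfying \eqref{equation sur h}. The global existence of $\ffi$ with the stated weighted energy bounds $\l w^{1/2}\dr Z^I\ffi\r_{L^2}\lesssim\e$ for $|I|\le N$ is the standard Klainerman-type result for a semilinear null-form wave equation: I would run the ghost-weight weighted energy estimate of Lemma \ref{weighted energy estimate} applied to $Z^I\ffi$, using the commutation identities \eqref{wave operator} and \eqref{null form}, the null-form pointwise bound \eqref{good null form}, and the weighted Klainerman--Sobolev inequality of Proposition \ref{prop WKNS} to extract the decay rates $(1+s)^{-1}(1+|q|)^{-1/2}$ that make the spacetime error integrals converge. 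The choice of weight $w$ (with exponent $2(\delta+1)$ on the exterior and $-\alpha$ on the interior) is dictated by matching the decay of the initial data in $H^{N+1}_\delta\times H^N_{\delta+1}$ via Proposition \ref{embedding}; the losses in regularity index ($N$ down to $N-4$, etc.) at each level are exactly the three derivatives lost to Klainerman--Sobolev plus bookkeeping.

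**The transport hierarchy.** For the inductive step I would assume, for all pairs $(\ell,j)$ and $(\ell)$ with $\ell\le k$, the bounds $|Z^IF^{(\ell,j)}|\lesssim \e(1+r)^{-1}$ and $\l\dr Z^I\psi^{(\ell)}\r_{L^2}\lesssim\e$ (with the appropriate, shrinking, ranges of $|I|$), together with the support properties in $A^R$ and $B^R$. To solve $(\textbf{T}(k+1,i))$ one uses the transport estimate of Proposition \ref{equation de transport prop}: writing $\Ll = \dr_t+\dr_r+\tfrac1r$ as $\dr_t+\dr_r$ acting on $r\cdot$, one gets $|F^{(k+1,i)}|\lesssim r^{-1}$ provided the RHS decays like $r^{-3}$. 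The delicate input is the term $\Box F^{(k)}$ on the RHS of \eqref{equation sur F ki}: a naive commutation of $\Box$ through $\Ll$ only gives $r^{-1}$ decay, so as flagged in the introduction one must compute $\Box F^{(k)}$ directly from the transport equation it satisfies (the content of Lemma \ref{d'alembertien}) to recover the extra two powers of $r$. The remaining RHS terms are products of good derivatives $\tdr$ of lower-order quantities against lower-order quantities — here the null structure and the $\tdr$ gain of $(1+s)^{-1}$ decay, combined with the inductive $(1+r)^{-1}$ bounds and the fact that everything is supported in $A^R$ (so $r\sim t$ and $1+s\sim 1+r$), delivers the needed $r^{-3}$. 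Then $(\textbf{W}(k+1))$ is a linear wave equation for $\psi^{(k+1)}$ with RHS a quadratic null-form expression in already-controlled quantities supported in $B^R$; a plain (unweighted) energy estimate plus finite speed of propagation (Lemma \ref{speed}) gives the $L^2$ bound and the support in $B^R$. Commuting with $Z^I$ and iterating costs a fixed number of derivatives per level, which is why $N\ge 6+5K$ is imposed.

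**The remainder equation and the bootstrap.** The final and most delicate step is \eqref{equation sur h}. I set up a bootstrap on the weighted energy norm $\sum_{|I|\le N+2-5K}\l w^{1/2}\dr Z^I h_\lambda\r_{L^2}\le C\e\lambda^{K-|I|}$, together with finite speed of propagation to keep $h_\lambda$ supported in $B^R$ (the source terms all are). The key quantitative point: Klainerman--Sobolev applied to these assumptions yields $|\dr Z^J h_\lambda|\lesssim \e\lambda^{K-|J|-... }$ with a loss of three derivatives, so pointwise $|\dr h_\lambda|\lesssim \e\lambda^{K-3}$ — actually one needs $\lambda^{K-4}$ after accounting for the $Z^I$-commuted quadratic terms — which is finite and small precisely when $K\ge 4$, and this is where the lower bound on $K$ is used. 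I would then estimate the RHS of \eqref{equation sur h} in the weighted energy inequality of Lemma \ref{weighted energy estimate}: the genuinely quadratic piece $Q(\dr h_\lambda,\dr h_\lambda)$ inside $\mathcal N(h_\lambda)$ is handled by the null structure (one factor is a good derivative, decaying like $(1+s)^{-1}$, so the spacetime integral is controlled by the ghost-weight term on the LHS), absorbing a factor $\e\lambda^{K-4}$ which for $\e$ small beats the constant; the linear-in-$h_\lambda$ terms $Q(\dr h_\lambda,\dr\ffi)$, $\lambda^+ Q(\dr(F\T_\lambda),\dr h_\lambda)$, $\lambda^+Q(\dr\psi,\dr h_\lambda)$ are treated similarly using the already-proven decay of $\ffi,F,\psi$; and the fully explicit source $\lambda^K(Q(\dr F,\dr\ffi)\T_\lambda + \Box F\,\T_\lambda + \lambda^+(\dots))$ contributes the inhomogeneous $\lambda^K$ on the RHS after using the $C^{N+4-5K}$ bounds on $F$ and $\psi$ — note that differentiating $\T_\lambda^{(k,i)}$ produces $\lambda^{-1}$ factors, but the schematic identity \eqref{trigo 2} and the prefactor $\lambda^K$ with $K$ large keep the balance, and commuting with $Z^I$ costs $\lambda^{-|I|}$, matching the assumed $\lambda^{K-|I|}$ rate. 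Closing the bootstrap then improves $C\e$ to $\tfrac12 C\e$, which together with local existence and a standard continuation argument yields the global solution. The main obstacle, as anticipated in the introduction, is the interplay in this last step between the high-frequency bookkeeping (powers of $\lambda^{-1}$ from derivatives of $\T_\lambda$ and from $Z^I$-commutators) and the dispersive decay needed to make the weighted energy estimate close — the null structure of $Q$ is what reconciles the two, and verifying this carefully across all the terms of $\mathcal N(h_\lambda)$ and the explicit sources is the technical heart of the argument.
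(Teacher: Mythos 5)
Your overall architecture coincides with the paper's: ghost-weight bootstrap plus Klainerman--Sobolev for the background $\ffi$, strong induction along the triangular hierarchy using the transport estimate of Proposition \ref{equation de transport prop} together with Lemma \ref{d'alembertien} for the dangerous $\Box F^{(k)}$ source, and a rescaled ghost-weight bootstrap for $h_\lambda$ with the pointwise loss $\lambda^{K-4}$ forcing $K\geq 4$. Those parts are in line with the paper's proof.

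There is, however, one step of your outline that fails as stated: the claim that the non-oscillating equations $(\textbf{W}(k))$ for $\psi^{(k)}$ can be closed by ``a plain (unweighted) energy estimate plus finite speed of propagation''. The right-hand side of $(\textbf{W}(k))$ contains the term $Q(\dr\psi^{(k)},\dr\ffi)$, which is \emph{linear in the unknown itself}; its bad component is $\tdr\psi^{(k)}\,\dr\ffi$, where $\tdr\psi^{(k)}$ is not better than the energy in a plain $L^2$ estimate and $\dr\ffi$ only decays like $\e(1+s)^{-1}(1+|q|)^{-\half}$, which inside $B^R$ (where $|q|$ may be small near the light cone) is no better than $\e(1+s)^{-1}$. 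A plain energy estimate plus Gronwall therefore yields at best a bound of the type $\e(1+t)^{C\e}$, not the uniform bound $\l \dr Z^I\psi^{(k)}\r_{L^2}\lesssim\e$ asserted in the theorem; and since the induction feeds the Klainerman--Sobolev decay of $\psi^{(k)}$ into the $r^{-3}$ source estimates of the next transport equation, this loss would propagate through the whole hierarchy. The paper closes this step by running for each $\psi^{(k)}$ the \emph{same} ghost-weight argument used for $\ffi$ and later for $h_\lambda$: one splits the decay of $\dr\ffi$ as $\e(1+s)^{-\half-\frac{\nu}{2}}(1+|q|)^{-1+\frac{\nu}{2}}$, uses $w/(1+|q|)^{2-\nu}\lesssim w'$ to convert the weight into the ghost derivative $w'$, and absorbs the resulting spacetime integral of $(w')^{\half}\tdr Z^J\psi^{(k)}$ into the good term $S_{k}(t)$ on the left-hand side (the support in $B^R$ comes from Lemma \ref{speed}, as you say, but the interior exponent $\alpha$ of $w$ is precisely what makes this absorption work there). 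Incorporating that mechanism into your step for $(\textbf{W}(k))$ — exactly as you already do for $\ffi$ and $h_\lambda$ — repairs the argument; as written, this sub-step is a genuine gap.
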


\par\leavevmode\par
Note that in this article, $\l f\r_{L^2}$ denotes the $L^2$ norm on a $t=constant$ slice. The rest of this article is devoted to the proof of this theorem, but we can already see that it implies our main result, i.e Theorem \ref{premier thm}.

\begin{proof}[Proof of Theorem \ref{premier thm}]
The reduced system is defined such that if an ansatz of the form \eqref{ansatz} solves it, then $\Phi_\lambda$ solves $\Box\Phi_\lambda=Q(\dr\Phi_\lambda,\dr\Phi_\lambda)$. Therefore, Theorem \ref{theo principal} implies Theorem \ref{premier thm}, setting
\begin{align*}
\Tilde{F}_{0,\lambda}&\vcentcolon=\frac{1}{\lambda}\sum_{\substack{2\leq k\leq K\\1\leq i\leq k}}\lambda^k\dr_tF^{(k,i)}_{\quad\;\;|t=0}\T_{\lambda\quad|t=0}^{(k,i)},\\
F&\vcentcolon=F^{(1,1)},\\
\Tilde{F}_{\lambda}&\vcentcolon=\frac{1}{\lambda^2}\left(\sum_{2\leq k\leq K-1}\lambda^k\psi^{(k)}+\sum_{\substack{2\leq k\leq K\\1\leq i\leq k}}\lambda^kF^{(k,i)}\T_{\lambda}^{(k,i)}+h_\lambda\right).
\end{align*}
As mentioned in Theorem \ref{premier thm},  we want $ \Tilde{F}_{0,\lambda}$  to depend only on $(\ffi_0,\ffi_1,F_0)$. This holds since the quantities $\dr_tF^{(k,i)}_{\quad\;\;|t=0}$ can be recover through \eqref{equation sur F ki} since $\dr_t=\Ll-\dr_r-\frac{1}{r}$. Because of the triangular structure of the reduced system, it can be seen quite easily that $\dr_tF^{(k,i)}_{\quad\;\;|t=0}$ only depends on $(\ffi_0,\ffi_1,F_0)$ and their spatial derivatives.
\end{proof}

\begin{remark}\label{bad remark}
According to Klainerman's result, considering quadratic non-linearities with a null structure ensures global existence for the non-linear wave equation in space dimension 3. But from a high-frequency perspective, null forms are also of prime interest. Indeed, if $Q(\dr u,\dr v)$ were not a null form, the expansion of $Q(\dr (f \T),\dr (g\T))$ would contain a term of the form $fgQ(\dr (t-r),\dr (t-r))$. Looking at the decomposition of $Q(\dr \Phi_\lambda,\dr\Phi_\lambda)$, we see that 
the equation on $\Phi^{(1)}$ would be of the form
\begin{align}
\Ll \dr_\theta\Phi^{(1)} = \left( \dr_\theta\Phi^{(1)} \right)^2 + \cdots \label{bad equation}
\end{align}
where $\dr_\theta$ corresponds to derivatives with respect to the third variable of $\Phi^{(1)}$ (see \eqref{formal} for the formal definition of $\Phi^{(1)}$). The non-linear equation \eqref{bad equation} requires $\Phi^{(1)}$ to be described by a "full" Fourier series, i.e
\begin{align*}
\Phi^{(1)}(t,x,\theta) = \sum_{\ell\in\Z}F^{(1,\ell)}(t,x)\exp\left(i\ell \theta \right)
\end{align*}
with an infinite amount of $F^{(1,\ell)}$ non-zero. Though the equations for the higher order terms (i.e $\Phi^{(i)}$ for $i\geq 1$) would remain linear equations, the triangular structure and the presence of $\Phi^{(1)}$ as source term would also imply that all the $\Phi^{(i)}$ are described by full Fourier series.  Therefore the null structure allows us to consider a "simple" ansatz, the so-called half-chessboard ansatz.
\end{remark}

\begin{remark}\label{remark poids}
The weight $w$ is parametrized by its exponents in the exterior region $q>0$ and in the interior region $q<0$. The exterior exponent $2(\delta+1)$ is chosen so that the bound $\l w^\half \dr Z^{I}\ffi\r_{L^2}\lesssim \e$ holds initially and follows from the smallness assumptions \eqref{smallness assumptions}. The constraint on $\delta$ then implies that $1<2(\delta+1)<3$, which will allow us to apply Proposition \ref{prop WKNS}. The interior exponent $\alpha$ is chosen so that there exists $0<\nu\leq 1$ and $2-\nu>\alpha+1$ (take for example $\nu=\frac{1-\alpha}{2}$), the latter implying
\begin{align}
\frac{w}{(1+|q|)^{2-\nu}}\lesssim w'.\label{w1 w1'}
\end{align}   
\end{remark}

\section{Global existence for the reduced system}\label{section 5}

In this section, we prove Theorem \ref{theo principal} following the strategy outlined previously.

\subsection{The background wave equation}\label{section ffi}

We first study the global existence for \eqref{equation sur ffi}. From the celebrated work of Klainermann on null forms, we know that a global solution exists. However, since the resolution of \eqref{equation sur h} will be a sort of high-frequency equivalent of this one, it seems helpful to the author to write down the complete argument leading to the global existence for $\eqref{equation sur ffi}$. The proof is a bootstrap argument, based on both $L^\infty$ and $L^2$ estimates. As mentioned in the introduction, we use Alinhac ghost weight method. Since the equation we solve satisfy the null condition, this is not mandatory. However, the ghost weight method leads to a very efficient proof.

\begin{prop}\label{prop ffi}
There exists $\e_0>0$ such that, if $\e\leq \e_0$, \eqref{equation sur ffi} admits a unique global solution $\ffi$. Moreover, $\ffi$ satisfies 
\begin{align}
 \l w_1^{\frac{1}{2}}\dr Z^I\ffi\r_{L^2}&\lesssim \e\qquad\text{for}\quad|I|\leq N.\label{L2 ffi}
\end{align}

\end{prop}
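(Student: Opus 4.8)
The plan is to prove Proposition \ref{prop ffi} by a standard bootstrap argument combining weighted energy estimates (Lemma \ref{weighted energy estimate}) with the weighted Klainerman--Sobolev inequality (Proposition \ref{prop WKNS}), exploiting the null structure of $Q$ through \eqref{good null form}. Since \eqref{equation sur ffi} is the background equation $\Box\ffi=Q(\dr\ffi,\dr\ffi)$, local well-posedness in $H^{N+1}$ follows from standard theory; the entire content is the global-in-time a priori estimate \eqref{L2 ffi}. I would set up the bootstrap on the weighted energy quantity $E_I(t)\vcentcolon=\l w^{\frac12}\dr Z^I\ffi\r_{L^2(\Sigma_t)}$ for $|I|\leq N$, assuming $E_I(t)\leq C_0\e$ for all $t$ in the interval of existence and all $|I|\leq N$, with $C_0$ a large constant to be fixed, and aiming to recover $E_I(t)\leq \tfrac12 C_0\e$ for $\e$ small enough. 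The initial bound $E_I(0)\lesssim\e$ follows from the smallness assumption \eqref{smallness assumptions} and the choice of the exterior weight exponent $2(\delta+1)$, as noted in Remark \ref{remark poids}.

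The core of the argument proceeds as follows. First, commute the equation with $Z^I$: using $[\Box,Z]=0$ (up to the harmless $[\Box,S]=2\Box$), \eqref{wave operator} and \eqref{null form}, one gets $|\Box Z^I\ffi|\lesssim\sum_{|I_1|+|I_2|\leq|I|}|Q(\dr Z^{I_1}\ffi,\dr Z^{I_2}\ffi)|$, and then by \eqref{good null form}, $|\Box Z^I\ffi|\lesssim\sum_{|I_1|+|I_2|\leq|I|}\big(|\dr Z^{I_1}\ffi||\tdr Z^{I_2}\ffi|+|\tdr Z^{I_1}\ffi||\dr Z^{I_2}\ffi|\big)$. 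In every such product the lower-order factor (say the one with $|I_j|\leq N/2$) is placed in $L^\infty$ via Proposition \ref{prop WKNS}, which under the bootstrap assumption gives $|w^{\frac12}\dr Z^{I_j}\ffi|\lesssim C_0\e\,(1+s)^{-1}(1+|q|)^{-1/2}$ and similarly for the good derivative $\tdr$ with the extra $(1+s)^{-1}$ gain from \eqref{decay minkowski}; the higher-order factor stays in $L^2$ against the weight $w$. Feeding this into Lemma \ref{weighted energy estimate} (in its $Z^I$-commuted form), the spacetime source term $\int_0^t\int_{\Sigma_\tau}|\Box Z^I\ffi\,\dr_t Z^I\ffi|w\,\d x\,\d\tau$ is controlled by $C_0\e\int_0^t(1+\tau)^{-1}\big(\l w^{\frac12}\dr Z^I\ffi\r_{L^2}^2+\l w^{\frac12}\tdr(\cdots)\r_{L^2}^2\big)\d\tau$ plus terms absorbed by the ghost-weight spacetime integral $\int\!\!\int|\tdr(\cdot)|^2 w'$ on the left; crucially the null form guarantees that at least one factor is a \emph{good} derivative, so the dangerous contributions hit either the $w'$-term or come with an integrable-in-time $(1+\tau)^{-1}$ weight only after extracting the $(1+|q|)^{-1/2}$ decay, which combined with $w'\gtrsim w/(1+|q|)^{2-\nu}$ (see \eqref{w1 w1'}) closes. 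A Grönwall argument in the resulting differential inequality $\frac{d}{dt}E_I^2\lesssim C_0\e\,(1+t)^{-1}E_I^2+(\text{good spacetime terms})$ is not quite enough by itself because of the $\log$ loss, so one should instead argue hierarchically in $|I|$ — the top-order estimate borrows decay from strictly-lower-order factors controlled by Klainerman--Sobolev, so there is no logarithmic divergence — yielding $E_I(t)\leq\tfrac12 C_0\e$ and continuing the solution globally by a standard continuation criterion.

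The main obstacle I anticipate is the careful bookkeeping of the decay rates in the source term to ensure the time integral converges without a logarithmic loss: the bare Klainerman--Sobolev bound gives only $(1+s)^{-1}$ decay of $\dr Z^{I}\ffi$, whose square is exactly borderline non-integrable in $\tau$, so one must genuinely use the null structure to trade one $\dr$ for a $\tdr$ — gaining the extra $(1+s)^{-1}$ — in every quadratic interaction, including the top-order one where the high-derivative factor cannot itself be estimated in $L^\infty$. The resolution is that in $Q(\dr Z^{I_1}\ffi,\dr Z^{I_2}\ffi)$ one always pairs the $L^\infty$-factor as the "bad derivative" and the $L^2$-factor as the "good derivative" $\tdr$ whenever possible, absorbing the latter into the ghost-weight term $\int\!\!\int|\tdr(\cdot)|^2w'$, while the remaining terms (good $L^\infty$ derivative, bad $L^2$ derivative) genuinely gain the integrable $(1+\tau)^{-1-\mu}$ factor from the $(1+|q|)$-weights in $w'$. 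A secondary technical point is verifying that the commutator $[\Box,S]=2\Box$ and the weight's $q$-dependence do not generate uncontrolled terms; this is handled exactly as in Lemma 6.1 of \cite{MR2680391}, on which Lemma \ref{weighted energy estimate} is modeled. Since none of this is new — it is the classical Alinhac/Lindblad--Rodnianski scheme specialized to a semi-linear null-form equation — the write-up should be short, and the author indeed frames it as a warm-up for the harder high-frequency equation \eqref{equation sur h}.
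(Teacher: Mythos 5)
Your scheme is the same as the paper's: a bootstrap on $\l w^{\half}\dr Z^I\ffi\r_{L^2}$, the ghost-weight estimate of Lemma \ref{weighted energy estimate}, the weighted Klainerman--Sobolev inequality, the null structure to put a good derivative in every product, and the $|I_1|\leq N/2$ or $|I_2|\leq N/2$ split. There is, however, one genuine gap in how you extract the decay. When the \emph{good} derivative is the low-order factor, you claim $|\tdr Z^{I_2}\ffi|$ gains an extra $(1+s)^{-1}$ "from \eqref{decay minkowski}"; but \eqref{decay minkowski} bounds $|\tdr Z^{I_2}\ffi|$ by $\frac{1}{1+s}\sum_{|J|=1}|Z^JZ^{I_2}\ffi|$, so it requires an $L^\infty$ bound on the \emph{undifferentiated} quantities $Z^{J}\ffi$. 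Your bootstrap controls only $w^{\half}\dr Z^I\ffi$ in $L^2$, and Klainerman--Sobolev then yields pointwise bounds only on $\dr Z^J\ffi$, namely $(1+s)^{-1}(1+|q|)^{-\half}$; this is \emph{not} enough in that case, since near the cone $(1+|q|)^{-\half}$ gives no decay, both $L^2$ factors are bad derivatives (so nothing can be fed to the $w'$ term), and you are left with exactly the borderline $(1+\tau)^{-1}$ and the logarithmic loss you worry about. The missing step — a substantial part of the paper's proof — is to integrate the pointwise bound on $\dr Z^J\ffi$ along the lines $s=\mathrm{const}$, starting from the initial-data decay $|\dr Z^J\ffi|\lesssim \e(1+r)^{-\delta-\frac{3}{2}}$ (weighted Sobolev embedding) and using $\delta+\half>0$, to obtain $|Z^J\ffi|\lesssim C_0\e\sqrt{1+|q|}\,(1+s)^{-1}$ for $|J|\leq N-3$ (this is \eqref{WKNS} in the paper). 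Only with this in hand does $|\tdr Z^{I_2}\ffi|\lesssim \e(1+s)^{-\frac{3}{2}}$ follow, making that case time-integrable; the same pointwise bound is also what the paper uses, via $|\dr Z^{I_1}\ffi|\lesssim(1+|q|)^{-1}|Z^{I_1+1}\ffi|$, to redistribute decay between $s$ and $q$ in the dangerous case before absorbing the good-derivative $L^2$ factor into the ghost-weight integral through \eqref{w1 w1'}.

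Relatedly, your middle remark that the log loss should be cured by arguing "hierarchically in $|I|$" is a misdiagnosis: the low-order/high-order split is not an induction (the top-order energy sits on both sides of the estimate no matter what), and the paper needs no Gr\"onwall argument and no hierarchy. Once the pointwise bound on $Z^J\ffi$ is available, every contribution is either absorbed into the spacetime ghost-weight term $S(t)$ (after the $2ab\leq a^2+b^2$ split, using $w/(1+|q|)^{2-\nu}\lesssim w'$) or comes with an integrable factor $(1+\tau)^{-1-\nu}$ or $(1+\tau)^{-\frac{3}{2}}$, so the bootstrap closes at all orders $|I|\leq N$ simultaneously. With that step added and the hierarchy remark removed, your argument coincides with the paper's proof.
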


\begin{proof}
We define $T_0$ to be the maximal time of existence of a solution $\ffi$, which, by the local in time theory, is positive and satisfies the following blow-up criterion:
\begin{equation*}
    T_0<+\infty \iff \lim_{t\to T_0^-}\sum_{|I|\leq N}\l w_1^{\frac{1}{2}}\dr Z^I\ffi\r_{L^2}(t)=+\infty.
\end{equation*}
Let $C_0\geq 1$ be a constant to be choosen later. We define $T<T_0$ to be the maximal time such that the inequality
\begin{align}
    \l w_1^{\frac{1}{2}}\dr Z^I\ffi\r_{L^2}(t)&\leq C_0\e\qquad\qquad\qquad\quad\text{for}\quad|I|\leq N,\label{BA u 2}
\end{align}
hold for $0\leq t\leq T$. By the assumptions on $\ffi_0$ and $\ffi_1$ and the Proposition \ref{embedding}, we have $T>0$, for $C_0$ large enough.

\par\leavevmode\par

We start by deriving decay for $Z^I\ffi$ from \eqref{BA u 2}. If $|I|\leq N-3$ we can apply the Klainerman-Sobolev inequality stated in Proposition \ref{prop WKNS} (since $1<2(\delta+1)<3$ and $\alpha>0$), use the bootstrap assumption \eqref{BA u 2} and obtain
\begin{align*}
\left| w_1^\half \dr Z^I\ffi \right| & \lesssim \frac{C_0\e}{(1+s)\sqrt{1+|q|}}.
\end{align*}
Using the behaviour of the weight $w_1$ in the exterior and interior region this implies
\begin{equation}\label{decay Z I ffi}
\left| \dr Z^I\ffi \right|  \lesssim \left\{
\begin{array}{c}
\frac{C_0\e}{(1+s)\sqrt{1+|q|}}  \quad\text{for}\quad q<0\\
\frac{C_0\e}{(1+s) (1+|q|)^{\delta+\frac{3}{2}} }   \quad\text{for}\quad q>0
\end{array}\right..
\end{equation}
In order to derive decay for $Z^I\ffi$, we integrate \eqref{decay Z I ffi} along the constant $s$ lines. Recall that the assumptions \eqref{smallness assumptions} and the weighted Sobolev embedding stated in Proposition \ref{embedding} implies that \[\left| \dr Z^I\ffi \right| \lesssim \frac{ C_0\e } { (1+r)^{\delta + \frac{3}{2}}}\] on the initial hypersurface. Note that $r=s$ if $t=0$. Now, let $q\geq 0$ and integrate \eqref{decay Z I ffi} from the initial hypersurface along the constant $s$ lines, this gives
\begin{align*}
\left|Z^I\ffi(q\geq 0)\right| & \lesssim \frac{ C_0\e } { (1+s)^{\delta + \frac{3}{2}}}  + \int_q^s \frac{C_0\e \;\d q'}{ (1+s)(1+|q'|)^{\delta+\frac{3}{2}}   } 
\\& \lesssim \frac{ C_0\e } { (1+s)^{\delta + \frac{3}{2}}} + \frac{ C_0\e } { (1+s)(1+|q|)^{\delta + \frac{1}{2}}}
\\& \lesssim  \frac{ C_0\e } { (1+s)(1+|q|)^{\delta + \frac{1}{2}}}
\end{align*}
where we also used the initial spatial decay and $|q|\leq s$. In particular, we obtain the following decay along the null cone $\left|Z^I\ffi(q = 0)\right| \lesssim \frac{C_0\e}{1+s}$. Now, let $q<0$ and integrate \eqref{decay Z I ffi} from the null cone along the constant $s$ lines, this gives
\begin{align*}
\left|Z^I\ffi(q < 0)\right| & \lesssim \frac{C_0\e}{1+s} + \int_q^0 \frac{C_0\e \; \d q'}{(1+s)\sqrt{1+|q'|}} \lesssim C_0\e \frac{\sqrt{1+|q|}}{1+s}.
\end{align*}
Since $\delta+\half>0$, the worst estimate is the interior one and overall we have proved that 
\begin{equation}
| Z^I\ffi|\lesssim \frac{C_0\e\sqrt{1+|q|}}{(1+s)}, \qquad \text{for}\quad |I|\leq N-3.  \label{WKNS}
\end{equation}

\par\leavevmode\par
We then improve the inequality \eqref{BA u 2}, using the weighted energy estimate given by Lemma \ref{weighted energy estimate}. We set 
\begin{align*}
E(t)&\vcentcolon=\sum_{|I|\leq N}\l w_1^{\frac{1}{2}}\dr Z^I\ffi\r^2_{L^2}(t),\\
S(t)&\vcentcolon=\int_0^t\sum_{|I|\leq N}\l (w'_1)^{\frac{1}{2}}\tdr Z^I\ffi\r_{L^2}^2(\tau)\d\tau.
\end{align*}
We use Lemma \ref{weighted energy estimate}, sum for $|I|\leq N$ and thanks to \eqref{wave operator} and \eqref{null form} we obtain for $t\in[0,T]$:
\begin{align}
    E(t)+&S(t) \lesssim E(0)+\sum_{|I_1|+|I_2|\leq N}\int_0^t\l w_1\dr Z^{I_1}\ffi \tdr Z^{I_2}\ffi \dr_tZ^I\ffi\r_{L^1}(\tau)\d\tau\label{WEE}.
\end{align}
Because of the assumption $(\ffi_0,\ffi_1)\in H^{N+1}_\delta\times H^N_{\delta+1}$ and the choice of exponent in the exterior region for the weight $w_1$, we have $E(0)\lesssim\e^2$. Since $|I_1|+|I_2|\leq N$, it must holds that $|I_2|\leq\frac{N}{2}$ or $|I_1|\leq\frac{N}{2}$. We examine the two cases separately:
\begin{itemize}
    \item If $|I_1|\leq \frac{N}{2}$, then 
    \begin{equation*}
        |\dr Z^{I_1}\ffi|\lesssim \frac{1}{1+|q|}|Z^{I_1+1}\ffi|\lesssim \frac{C_0\e}{(1+s)^{\half +\frac{\nu}{2}}(1+|q|)^{1-\frac{\nu}{2}}}
    \end{equation*}
    where we used \eqref{decay minkowski} and \eqref{WKNS} and where $\nu$ is as in the Remark \ref{remark poids}. Using $2ab\leq a^2+b^2$ and \eqref{BA u 2} we obtain:
    \begin{align*}
        &\l w_1\dr Z^{I_1}\ffi \tdr Z^{I_2}\ffi \dr_tZ^I\ffi\r_{L^1}(\tau)
        \\&\qquad\qquad\lesssim \frac{C_0\e}{(1+\tau)^{1+\nu}}\l w_1^{\frac{1}{2}}\dr_tZ^I\ffi\r_{L^2}^2(\tau)+C_0\e\l \frac{w_1^{\frac{1}{2}}}{(1+|q|)^{1-\frac{\nu}{2}}}\tdr Z^{I_2}\ffi\r_{L^2}^2(\tau)
         \\&\qquad\qquad\lesssim \frac{C_0^3\e^3}{(1+\tau)^{1+\nu}}+C_0\e\l  \left( w'_1 \right)^\half  \tdr Z^{I_2}\ffi\r_{L^2}^2(\tau).
    \end{align*}
    The last step is valid because of \eqref{w1 w1'}.
    \item If $|I_2|\leq \frac{N}{2}$, then
    \begin{equation*}
        |\tdr Z^{I_2}\ffi|\lesssim \frac{1}{1+s}|Z^{I_2+1}\ffi|\lesssim \frac{C_0\e}{(1+s)^{\frac{3}{2}}},
    \end{equation*}
     where we agin used \eqref{decay minkowski} and \eqref{WKNS}. Using the Cauchy-Schwarz inequality and \eqref{BA u 2} we obtain directly:
     \begin{align*}
     \l w_1\dr Z^{I_1}\ffi \tdr Z^{I_2}\ffi \dr_tZ^I\ffi\r_{L^1}(\tau)&\lesssim \frac{C_0\e}{(1+\tau)^{\frac{3}{2}}} \l w_1^\half \dr Z^{I_1}\ffi \r_{L^2} \l w_1^\half \dr_t Z^{I}\ffi \r_{L^2} \lesssim \frac{C_0^3\e^3}{(1+\tau)^{\frac{3}{2}}}
     \end{align*}
\end{itemize}
Therefore,  since $t\longmapsto(1+\tau)^{-1-\nu}$ and $t\longmapsto(1+\tau)^{-\frac{3}{2}}$ are both integrable over $\R_+$ we obtain for $t\in [0,T]$:
\begin{align*}
\sum_{|I_1|+|I_2|\leq N} \int_0^t \l w_1\dr Z^{I_1}\ffi \tdr Z^{I_2}\ffi \dr_tZ^I\ffi\r_{L^1}(\tau)\d\tau \lesssim C_0^3\e^3 + C_0\e S(t).
\end{align*}
Absorbing $C_0\e S(t)$into the LHS of \eqref{WEE} we have proved that 
\begin{align*}
\l w_1^{\frac{1}{2}}\dr Z^I\ffi\r_{L^2}&\leq C\e+C\sqrt{C_0\e}C_0\e\qquad\text{for}\quad|I|\leq N,
\end{align*}
with $C>0$ a numerical constant. We now choose $C_0$ such that $C\leq \frac{C_0}{4}$ and $\e_0$ such that $CC_0\e\leq \frac{C_0}{4}$ and $C\sqrt{C_0\e}\leq\frac{1}{4}$. Thus, we proved that the inequality \eqref{BA u 2} hold with a constant $\frac{C_0}{2}$ instead of $C_0$, which contradict the maximality of $T$, thus proving that $T=T_0$. But if $T=T_0$, it implies that the energy is bounded up to the time $T_0$, implying that $T_0=+\infty$, which concludes the proof.
\end{proof}

\subsection{The high-frequency hierarchy}\label{section F ffi}

In this section, we solve the coupled transport equations \eqref{equation sur F ki} and wave equations \eqref{equation sur ffi k} with a strong induction argument which heavily relies on the triangular structure of the reduced system.

The equations \eqref{equation sur F ki} are of the form  
\begin{equation}\label{Ll1}
\Ll f = f\mu +g.
\end{equation}
Everything we need to know about \eqref{Ll1} is contained in the following proposition, whose proof is postponed to Appendix \ref{appendice A}.

\begin{prop}\label{equation de transport prop}
Let $M\in\N$, $f_0:\R^3\to\R$ a $C^M$ function supported in $A_0$ such that
\begin{equation*}
\l f_0\r_{C^M}  \lesssim \e.
\end{equation*}
Moreover, let $\mu,g:\R_+\times\R^3\to\R$ two $C^M$ functions with $g$ supported in $A^R$ and such that 
\begin{align*}
(1+r)^2|Z^I\mu|+(1+r)^3|Z^Ig|&\lesssim \e \qquad \text{for}\quad |I|\leq M.
\end{align*}
There exists a unique global solution $f\in C^M$ to \eqref{Ll1} supported in $A^R$ and satisfying
\begin{equation*}
|Z^If|\lesssim \frac{\e}{1+r} \qquad \text{for}\quad |I|\leq M.
\end{equation*}
\end{prop}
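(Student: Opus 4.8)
The plan is to solve the transport equation \eqref{Ll1} by the method of characteristics along the rays of $\Ll$, exploiting the classical observation that $\Ll=\dr_t+\dr_r+\frac{1}{r}$ conjugates to the plain directional derivative after multiplication by $r$: if we set $\tilde f=rf$, $\tilde g=rg$, then \eqref{Ll1} becomes $(\dr_t+\dr_r)\tilde f=\mu\tilde f+\tilde g$. The characteristics are the lines $s=t+r=\text{const}$ (equivalently, the outgoing null rays $r=t+r_0$ with $r_0\ge 0$ the initial radius), parametrized by $t$; since $f_0$ is supported in $A_0^R$, i.e.\ $R^{-1}\le r_0\le R$, and the support is preserved by the flow together with the hypothesis that $g$ is supported in $A^R$ (so $q=r-t$ stays in $[R^{-1},R]$), the solution stays supported in $A^R$. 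Along a characteristic the ODE for $\tilde f$ is linear, $\frac{d}{dt}\tilde f=\mu\tilde f+\tilde g$, with solution $\tilde f(t)=e^{\int_0^t\mu}\tilde f(0)+\int_0^t e^{\int_\tau^t\mu}\tilde g(\tau)\,d\tau$. The integrating factor is controlled because on $A^R$ we have $r\gtrsim t$ (indeed $t=r-q\ge r-R\gtrsim r$ once $r$ is large, and $t$ is bounded when $r$ is bounded), so $|\mu|\lesssim\e(1+r)^{-2}\lesssim\e(1+t)^{-2}$ along the ray, whence $\int_0^\infty|\mu|\,dt\lesssim\e$ and $e^{\pm\int\mu}\lesssim 1$ uniformly. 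Similarly $|\tilde g|=r|g|\lesssim\e(1+r)^{-2}\lesssim\e(1+t)^{-2}$, so $\int_0^t|\tilde g|\,d\tau\lesssim\e$, giving $|\tilde f(t)|\lesssim\e$ (using $|\tilde f(0)|=r_0|f_0(r_0\omega)|\lesssim\e$ since $r_0\le R$), i.e.\ $|f|=|\tilde f|/r\lesssim\e/(1+r)$.

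For the higher-order estimates, the cleanest route is to commute the Minkowski vector fields $Z$ through \eqref{Ll1} rather than through the characteristic ODE. One computes the commutators $[Z,\Ll]$ for each $Z$ in \eqref{minkowski vector fields}; these are either zero or again of the form (lower-order differential operator)$\times r^{-1}$ plus multiples of $\Ll$ itself, so that $Z^I f$ satisfies an equation of the same type $\Ll(Z^I f)=Z^I f\cdot\mu+G_I$ where $G_I$ is a linear combination of terms $Z^{I'}f\cdot Z^{I''}\mu$ and $Z^{I'}g$ with $|I'|,|I''|<|I|$ (for the $\mu$-terms) or $|I'|\le|I|$ (for the $g$-terms), together with commutator corrections. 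One then runs an induction on $|I|$: assuming $|Z^{I'}f|\lesssim\e(1+r)^{-1}$ for $|I'|<|I|$, the new source $G_I$ satisfies $(1+r)^3|G_I|\lesssim\e$ — here the key point is that $Z$ applied to the weight $(1+r)^{-1}$ loses at most a bounded factor on $A^R$ (where $r\gtrsim t\gtrsim s$), and the product of a decay-$(1+r)^{-1}$ factor with a decay-$(1+r)^{-2}$ factor $Z^{I''}\mu$ gives decay $(1+r)^{-3}$ — so the base-case argument applies verbatim to yield $|Z^I f|\lesssim\e(1+r)^{-1}$. Uniqueness is immediate from the characteristic ODE (the homogeneous equation with zero data has only the zero solution), and $C^M$ regularity follows from smooth dependence of the characteristic flow on initial data together with the regularity of $\mu,g,f_0$.

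The main obstacle, and the only place any care is needed, is the bookkeeping of the commutators $[Z^I,\Ll]$ and verifying that the resulting source $G_I$ genuinely has the decay $(1+r)^3|Z^I G_I|\lesssim\e$ demanded by the proposition — in particular, that no term appears where too many vector fields land on the slowly-decaying $f$-factor without a compensating gain, and that the factors of $r^{-1}$ produced by the commutators only help. This is straightforward but tedious, which is presumably why the authors defer it to Appendix \ref{appendice A}; conceptually the proof is just "integrate the linear transport ODE along outgoing null rays, on which everything behaves like it does near the light cone," and the support hypotheses on $f_0$ and $g$ (confining everything to the strip $A^R$ where $r\sim t\sim s$) are exactly what make all the weights interchangeable.
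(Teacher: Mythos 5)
Your base case matches the paper's (Lemma \ref{transport reduit lemme}): multiply by $r$ and integrate the linear ODE along the outgoing rays $r=t+r_0$ with the integrating factor $\exp\left(\int\mu\right)$ — note, though, that these rays are the level sets of $q=r-t$, not of $s=t+r$ as you write. The genuine gap is in the higher-order step. The commutator $[\Ll,Z]$ is not ``a lower-order operator times $r^{-1}$'': by Proposition \ref{commutator} it contains terms of the form $\frac{a}{r}\tdr_{\ell}Z^{I'}f$ with $|I'|\leq|I|-1$, and $\tdr_{\ell}Z^{I'}f$ carries $|I|$ derivatives of $f$ in total. Hence the source $G_I$ in your induction is \emph{not} controlled by the hypothesis $|Z^{I'}f|\lesssim\e(1+r)^{-1}$ for $|I'|<|I|$ alone, and the inductive step as you set it up does not close. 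This is precisely the point you flag as ``straightforward but tedious''; it is in fact where the only real idea of the proof lives.

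The paper closes it by trading the good derivative for a rotation, $\tdr_{\ell}=\frac{\omega^k}{r}\Omega_{\ell k}$, so the dangerous term becomes $\frac{1}{r^2}\Omega Z^{I'}f$: one fewer general vector field, but one extra rotation. This forces a two-parameter induction: first $|\Omega^Jf|\lesssim\e(1+r)^{-1}$ for all $|J|\leq M$ (Lemma \ref{induction 1 lemme}, which is cheap because $[\Ll,\Omega]=0$), and then a strong induction on the number of general vector fields for the quantities $\Omega^JZ^If$ with $|I|+|J|\leq M$ (Lemma \ref{induction 2 lemme}); it is this bookkeeping, with rotations tracked separately — together with computing the time-derivative initial data $\dr_t^kf_{|t=0}$ from the equation, which you also skip — that yields the $(1+r)^{-3}$ decay of the commuted source with no loss of derivatives. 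An alternative completion of your plan would be to bound $|\tdr Z^{I'}f|\lesssim\frac{1}{1+s}\sum_{|I''|\leq|I|}|Z^{I''}f|$ via \eqref{decay minkowski} and absorb the resulting top-order contributions through a Gr\"onwall inequality for $\max_{|I''|\leq|I|}\sup_{A^R\cap\Sigma_t}r|Z^{I''}f|$, using that the corresponding coefficient is of size $(1+t)^{-2}$ on $A^R$ and hence integrable; but some such device must be supplied explicitly — as written, your higher-order argument is incomplete.
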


With Proposition \ref{equation de transport prop} we are now ready to solve \eqref{equation sur F ki} and \eqref{equation sur ffi k}. Since we will prove that the functions $F^{(k,i)}$ are supported in $A^R$, we will be only interested in the decay in $r$, which gives decay in $s$. We will also forget about $q$ and use many times the fact that in the region $A^R$, $r$ and $t$ are equivalent. We prove the following proposition:

\begin{prop}\label{prop récurrence}
Let $1\leq i\leq  k\leq K$ and $2\leq \ell\leq K-1$. There exists $F^{(k,i)}\in C^{N_k}$ supported in $A^R$ solving \eqref{equation sur F ki} and $\psi^{(\ell)}\in H^{M_\ell}$ supported in $B^R$ solving \eqref{equation sur ffi k}.Moreover, they satisfy the following estimates:
\begin{align}
 | Z^I F^{(k,i)} | & \lesssim \frac{\e}{1+r}, \quad\qquad\text{for}\quad |I|\leq N_k,\label{A}
\\ | Z^I \Box F^{(k,i)} | & \lesssim \frac{\e}{(1+r)^3}, \qquad\text{for}\quad |I|\leq N_k-2,\label{B}
\\ \l \dr Z^I \psi^{(\ell)} \r_{L^2} & \lesssim \e, \quad\qquad\qquad\text{for}\quad |I|\leq M_\ell .\label{C}
\end{align} 
where $N_k=N+4-5k$ for $k\geq 2$ and $N_1=N-4$ and $M_\ell = N+3-5\ell$.
\end{prop}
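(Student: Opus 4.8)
The plan is to argue by strong induction on the level $n$: at level $n$ we construct $F^{(n,i)}$ for $1\le i\le n$ (and, when $2\le n\le K-1$, also $\psi^{(n)}$), proving \eqref{A} and \eqref{B} for these $F^{(n,i)}$ and \eqref{C} for $\psi^{(n)}$. Throughout, the working heuristic is that on $A^R$ one has $R^{-1}\le q\le R$, so $1+|q|\sim 1$, $r\sim s\sim t$, all the weights from Proposition \ref{prop WKNS} are $\sim 1$, and the operators $\dr_r-\dr_t$ transverse to the light cone cost no power of $r$.

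\emph{Base case $n=1$.} Equation \eqref{equation sur F ki} for $(1,1)$ reads $\Ll F^{(1,1)}=\mu F^{(1,1)}$ with $\mu$ a fixed linear combination of components of $\tdr\ffi$ (the one produced by $Q(\dr(t-r),\dr\ffi)$, using the computations in the proof of Lemma \ref{technical lemma}). I apply Proposition \ref{equation de transport prop} with $M=N_1=N-4$, $f_0=F_0$, $g=0$. Its hypothesis $(1+r)^2|Z^I\mu|\lesssim\e$ for $|I|\le N_1$ follows from Proposition \ref{prop ffi}: on $A^R$, \eqref{WKNS} gives $|Z^I\ffi|\lesssim\e/(1+r)$ and \eqref{decay minkowski} upgrades this to $|\tdr Z^I\ffi|\lesssim(1+s)^{-1}|Z^{I+1}\ffi|\lesssim\e/(1+r)^2$ for $|I|\le N-4$. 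This yields $F^{(1,1)}\in C^{N_1}$, supported in $A^R$, satisfying \eqref{A}. For \eqref{B} I invoke Lemma \ref{d'alembertien}, i.e. the identity $\Box f=\tfrac1r(\dr_r-\dr_t)(r\Ll f)+\tfrac1{r^2}\Delta_{S^2}f$ (where the eikonal property \eqref{eikonal equation} of $t-r$ is hidden): $\Box F^{(1,1)}$ is then governed by $\Ll F^{(1,1)}=\mu F^{(1,1)}$, which decays like $\e^2/(1+r)^3$ on $A^R$; the transverse derivative $\dr_r-\dr_t$ costs no $r$-decay on the bounded-$q$ region $A^R$, and $\tfrac1{r^2}\Delta_{S^2}F^{(1,1)}\lesssim\tfrac1{r^2}|Z^2F^{(1,1)}|\lesssim\e/(1+r)^3$; commuting with $Z^I$ via \eqref{wave operator} costs two derivatives, giving \eqref{B} for $|I|\le N_1-2$.

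\emph{Inductive step, transport equations.} Assume \eqref{A}, \eqref{B} for $F^{(\le n)}$ and \eqref{C} (and the attendant pointwise bounds, see below) for $\psi^{(\le n)}$. For each $1\le i\le n+1$, equation \eqref{equation sur F ki} for $F^{(n+1,i)}$ is of the form $\Ll F^{(n+1,i)}=\mu F^{(n+1,i)}+g$ with
\[
g=\Box F^{(n,\cdot)}+Q(\dr F^{(\le n)},\dr\ffi)+Q(\dr F^{(\le n)},\dr F^{(\le n)})+F^{(\le n)}\tdr F^{(\le n)}+F^{(\le n)}\tdr\psi^{(\le n)}+Q(\dr F^{(\le n)},\dr\psi^{(\le n)}).
\]
Every term of $g$ carries a factor belonging to the family $F^{(\le n)}$ or is $\Box F^{(n,\cdot)}$, so $g$ is supported in $A^R$ — this is exactly why the $\psi^{(\le n)}$-contributions, with $\psi$ only supported in $B^R$, remain localized. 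To check $(1+r)^3|Z^Ig|\lesssim\e$ for $|I|\le N_{n+1}=N_n-5$: the term $\Box F^{(n,\cdot)}$ is \eqref{B} at level $n$; the null-form terms are, by \eqref{good null form}, bounded by products $|\dr(\cdot)||\tdr(\cdot)|$, and the factor $(1+s)^{-1}\sim(1+r)^{-1}$ gained on each good derivative via \eqref{decay minkowski}, together with \eqref{A} and the decay of $\ffi$, supplies the extra $(1+r)^{-1}$; $F^{(\le n)}\tdr F^{(\le n)}$ is likewise $\lesssim\e^2/(1+r)^3$; finally the $\psi^{(\le n)}$-terms need the pointwise decay $|Z^I\tdr\psi^{(\le n)}|\lesssim\e/(1+r)^2$ and $|Z^I\dr\psi^{(\le n)}|\lesssim\e/(1+r)$ on $A^R$, obtained from \eqref{C} via Proposition \ref{prop WKNS} (with a weight $\sim1$ on the bounded-$q$ region) and the compact support of $\psi^{(\le n)}$ on each slice (finite speed of propagation), after which the companion factor $F^{(\le n)}\lesssim\e/(1+r)$ restores the missing $r$-decay. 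Proposition \ref{equation de transport prop} then gives $F^{(n+1,i)}\in C^{N_{n+1}}$, supported in $A^R$, satisfying \eqref{A} at level $n+1$; and \eqref{B} at level $n+1$ follows exactly as in the base case from Lemma \ref{d'alembertien}, since $\Box F^{(n+1,i)}$ is controlled by $\Ll F^{(n+1,i)}=\mu F^{(n+1,i)}+g$, which decays like $\e/(1+r)^3$ by the bounds just established, plus the harmless angular term.

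\emph{Inductive step, wave equations, and main obstacle.} When $2\le n+1\le K-1$ I then solve \eqref{equation sur ffi k} for $\psi^{(n+1)}$, whose right-hand side is $Q(\dr\psi^{(n+1)},\dr\ffi)+Q(\dr\psi^{(\le n-1)},\dr\psi^{(\le n-1)})+Q(\dr F^{(\le n+1)},\dr F^{(\le n+1)})+F^{(\le n+1)}\tdr F^{(\le n+1)}$. Since $\psi^{(n+1)}$ has zero data \eqref{initial data} and the source is supported in $B^R$, Lemma \ref{speed} makes $\psi^{(n+1)}$ supported in $B^R$, hence compactly supported on each $\Sigma_t$. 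I run a bootstrap on $\sum_{|I|\le M_{n+1}}\|\dr Z^I\psi^{(n+1)}\|_{L^2}$ (and the pointwise norms derived from it by Proposition \ref{prop WKNS}) via the weighted energy estimate of Lemma \ref{weighted energy estimate}, exactly as in the proof of Proposition \ref{prop ffi}: the linear term $Q(\dr\psi^{(n+1)},\dr\ffi)$ is absorbed using the null structure, the decay of $\ffi$ and the ghost-weight spacetime integral; $Q(\dr\psi^{(\le n-1)},\dr\psi^{(\le n-1)})$ is controlled by \eqref{C} at lower levels; and the $F^{(\le n+1)}$-terms, supported in $A^R$ and decaying like $\e^2/(1+r)^3$ (by \eqref{A}, \eqref{B}, \eqref{decay minkowski} and the null structure), have $L^2$-norm on $\Sigma_\tau$ bounded by $\e^2/(1+\tau)^2$, integrable in $\tau$. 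This closes the bootstrap and gives \eqref{C} at level $n+1$. Tracking regularity, solving (T$(n+1,i)$) costs two orders from the $\Box F^{(n,\cdot)}$ term and three from the Klainerman--Sobolev step on the $\psi$-terms, hence the uniform loss $N_{n+1}=N_n-5$, $M_{n+1}=M_n-5$; the hypothesis $N\ge 6+5K$ is precisely what keeps every index (down to $N_K$ and $M_{K-1}$) positive enough. The main obstacle is estimate \eqref{B}: differentiating \eqref{A} directly only yields $|\Box F^{(k,i)}|\lesssim\e/(1+r)$ (two bad derivatives of an $r^{-1}$-decaying function), which is two powers of $r$ short of what Proposition \ref{equation de transport prop} requires of the source $\Box F^{(k,\cdot)}$ at the next level, so the hierarchy would not close; the resolution (Lemma \ref{d'alembertien}, via the identity above and the eikonal equation) is that $\Box F^{(k,i)}$ depends on the already-known, $r^{-3}$-decaying right-hand side $\Ll F^{(k,i)}$ rather than on $F^{(k,i)}$ itself. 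A secondary delicate point is extracting from the energy bound \eqref{C} the pointwise decay of $\psi^{(\le k)}$ and its good derivatives on $A^R$, which works because $\psi^{(\le k)}$ is compactly supported on slices and the region $A^R$ is bounded in $q$.
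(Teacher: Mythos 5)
Your proposal is correct and follows essentially the same route as the paper: strong induction on the level respecting the triangular structure, the transport equations handled by Proposition \ref{equation de transport prop} after estimating the source from lower levels, the crucial $(1+r)^{-3}$ bound \eqref{B} obtained from Lemma \ref{d'alembertien} (rather than commuting $\Box$ with $\Ll$, exactly the point of Remark \ref{remarque 5.2}), pointwise decay of $\ffi$ and $\psi^{(\le k)}$ extracted from the $L^2$ bounds via Klainerman--Sobolev, and the ghost-weight bootstrap for $\psi^{(k+1)}$ mirroring Proposition \ref{prop ffi}. The only slip is bookkeeping: the loss at the first step is $2$, not $5$ (i.e.\ $N_2=N_1-2$, since $\psi^{(1)}=0$ and the limiting term in $\mathbf{T}(2,i)$ is $\Box F^{(1,1)}$), so the ``uniform loss $N_{n+1}=N_n-5$'' only holds from level $2$ onwards, consistently with $N_1=N-4$, $N_k=N+4-5k$; this does not affect the argument.
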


\begin{remark}
Since $\psi^{(1)}$ is not defined, the limiting term in the equation $\mathbf{T}(2,i)$ is $\Box F^{(1,1)}$, whereas if $k\geq 3$ then the limiting term in the equation $\mathbf{T}(k,i)$ is $\dr\psi^{(k-1)}$ since we lose three derivatives using the Klainerman-Sobolev inequality to obtain decay from $L^2$ estimates. This explains why $N_1-N_2 \neq N_k - N_{k+1}$ for all $k\geq 2$.
\end{remark}

To prove Proposition \ref{prop récurrence}, we proceed by strong induction on the value of $k$.  More precisely, we will show that if the estimates \eqref{A}-\eqref{B}-\eqref{C} holds for $1\leq k \leq k'$ and $2\leq \ell\leq k'$ for $k'$ some integer satisfying $2\leq k'\leq K-2$, then they also hold for $k,\ell=k'+1$.  Note that the borderline cases $k=1$ and $k=K$, for which $\psi^{(k)}$ is not defined, are proved in a similar manner so we don't write down their specific proofs.

\par\leavevmode\par
Before we start, we derive decay for solutions of wave equation, i.e $\psi^{(\ell)}$ for $\ell\leq k'$ and $\ffi$. This follows from the weighted Klainerman-Sobolev inequality of Proposition \ref{prop WKNS} and the exact argument has already been given in the proof of Proposition \ref{prop ffi}. Following the same strategy, we obtain from \eqref{L2 ffi} and \eqref{C}:
\begin{align}
|Z^I\psi^{(\ell)}|&\lesssim \e\frac{\sqrt{1+|q|}}{1+s},\qquad\text{for}\quad |I|\leq M_\ell-3 \quad\text{and}\quad \ell\leq k', \label{decay ffi l}\\
|Z^I\ffi|&\lesssim \e\frac{\sqrt{1+|q|}}{1+s},\qquad\text{for}\quad |I|\leq N-3. \label{decay ffi}
\end{align}

\subsubsection{The transport equation for the oscillating terms} 

We start by solving $(\textbf{T}(k'+1,i))$, which we recall:
\begin{align*}
\Ll F^{(k'+1,i)} & = \Box F^{(k')} + F^{(k'+1,i)}\tdr\ffi + Q(\dr F^{(\leq k')},\dr\ffi) + Q(\dr F^{(\leq k')} , \dr F^{(\leq k')}) 
\\&\quad + F^{(\leq k')}\tdr F^{(\leq k')} + F^{(\leq k')}\tdr\psi^{(\leq k')} + Q(\dr F^{(\leq k')}, \dr \psi^{(\leq k')}).\nonumber
\end{align*}
In sake of clarity we define $G^{(k'+1,i)}$ such that $\Ll F^{(k'+1,i)} = F^{(k'+1,i)}\tdr\ffi + G^{(k'+1,i)}$. Note that the expression of $G^{(k'+1,i)}$ involves only $F^{(\ell,j)}$ for $\ell\leq k'$ and $\ffi^{(\ell)}$ for $\ell\leq k'$ so we can estimate it using \eqref{A}, \eqref{B} and \eqref{C}. This is done in the following lemma.

\begin{lem}\label{estimee sur G k+1}
$G^{(k'+1,i)}$ is supported in $A^R$ and the following estimate holds 
\begin{equation*}
|Z^IG^{(k'+1,i)}|\lesssim \frac{\e}{(1+r)^{3}},\qquad\text{for}\quad |I|\leq N-1-5k',
\end{equation*}
\end{lem}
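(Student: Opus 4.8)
The plan is to estimate each of the terms appearing in the expression
$$
G^{(k'+1,i)} = \Box F^{(k')} + Q(\dr F^{(\leq k')},\dr\ffi) + Q(\dr F^{(\leq k')} , \dr F^{(\leq k')}) + F^{(\leq k')}\tdr F^{(\leq k')} + F^{(\leq k')}\tdr\psi^{(\leq k')} + Q(\dr F^{(\leq k')}, \dr \psi^{(\leq k')}),
$$
separately, using the induction hypotheses \eqref{A}--\eqref{B}--\eqref{C} together with the decay \eqref{decay ffi l}--\eqref{decay ffi} for the waves. The support statement is immediate: every summand contains at least one factor of the form $F^{(\ell,j)}$ with $\ell\leq k'$, which by \eqref{A} (and the induction hypothesis) is supported in $A^R$, and a product is supported in the intersection of the supports, hence in $A^R$.

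For the decay, the key point is that in the region $A^R$ the variables $r$, $s$ and $t$ are all equivalent, so every decay rate $(1+s)^{-m}$ may be read as $(1+r)^{-m}$. I would treat the terms in three groups. First, $\Box F^{(k')}$: by \eqref{B} (applied with $k=k'$, valid since $N_{k'}-2 \geq N-1-5k'$ as soon as $k'\geq 2$; the borderline $k'=1$ uses the hypothesis on $\Box F^{(1,1)}$), one has $|Z^I\Box F^{(k')}|\lesssim \e(1+r)^{-3}$, which is exactly the claimed bound. Second, the quadratic terms involving a wave factor, namely $Q(\dr F^{(\leq k')},\dr\ffi)$, $F^{(\leq k')}\tdr\psi^{(\leq k')}$, $Q(\dr F^{(\leq k')},\dr\psi^{(\leq k')})$: here I distribute the derivatives $Z^I$ via the Leibniz rule and the null-form property \eqref{null form}, bound the $F$-factor and its derivatives by $\e(1+r)^{-1}$ using \eqref{A} (resp. its $\dr$ or $\tdr$ version, which only improves things by a factor $(1+s)^{-1}$ from \eqref{decay minkowski}), and bound the wave-factor's derivatives by $\e (1+s)^{-1}$ from \eqref{decay ffi l}--\eqref{decay ffi} combined with \eqref{decay minkowski} — here the null structure \eqref{good null form} gives an extra $(1+s)^{-1}$ from a good derivative falling on the wave, yielding $(1+r)^{-1}\cdot(1+s)^{-2}\sim (1+r)^{-3}$. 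Third, the purely high-frequency quadratic terms $Q(\dr F^{(\leq k')},\dr F^{(\leq k')})$ and $F^{(\leq k')}\tdr F^{(\leq k')}$: each such product is bounded by $|\dr F^{(\leq k')}|\,|\tdr F^{(\leq k')}|$ (using the null-form bound \eqref{good null form}) or $|F^{(\leq k')}|\,|\tdr F^{(\leq k')}|$, and with \eqref{A} and \eqref{decay minkowski} each factor carries a $(1+r)^{-1}$, while the good derivative $\tdr$ contributes $(1+s)^{-1}$; this gives at worst $(1+r)^{-1}\cdot(1+r)^{-1}\cdot(1+s)^{-1}\sim(1+r)^{-3}$ again.

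Throughout, distributing $Z^I$ over a product of two factors splits $|I|=|I_1|+|I_2|$, and at the worst one of the two indices is at most $|I|/2 \leq (N-1-5k')/2$; since the hypotheses \eqref{A}, \eqref{B}, \eqref{C}, \eqref{decay ffi l}, \eqref{decay ffi} are available up to order $N_{k'}$, $N_{k'}-2$, $M_{k'}-3$ respectively, and $N_{k'} = N+4-5k' > N-1-5k'$, there is ample room, so no top-order issue arises. The main obstacle — really the only thing to be careful about — is the bookkeeping: checking in each term that after applying $Z^I$ there are enough derivatives available on both factors within the claimed range $|I|\leq N-1-5k'$, and that every product indeed produces at least the decay rate $(1+r)^{-3}$ (the null structure is essential precisely for the two pure-$F$ quadratic terms and for $Q(\dr F,\dr\ffi)$, since without the extra good derivative one would only get $(1+r)^{-2}$). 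Once this is organized, the estimate follows by summing the finitely many contributions.
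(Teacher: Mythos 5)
Your argument is correct and is essentially the paper's own proof: you estimate each term of $G^{(k'+1,i)}$ pointwise using the induction hypotheses \eqref{A}, \eqref{B}, \eqref{C}, the decay estimates \eqref{decay ffi l} and \eqref{decay ffi}, the null structure, and the fact that everything is supported in $A^R$ where $r\sim s\sim t$ and $1+|q|\sim 1$. One bookkeeping correction: since all the bounds invoked are pointwise there is no low/high split, so the relevant worst case is not ``one index at most $|I|/2$'' but all derivatives (plus the one from $\dr$ or $\tdr$) landing on the $\psi^{(\leq k')}$ factor, for which \eqref{decay ffi l} is available only up to order $M_{k'}-3$; this is exactly saturated when $|I|=N-1-5k'=M_{k'}-4$, so there is no ``ample room'' there — indeed this borderline constraint is precisely what fixes the derivative range in the statement of the lemma — although the estimate does still close at that order, so your conclusion stands.
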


\begin{proof}
Formally we have
\begin{align*}
G^{(k'+1,i)} & = \Box F^{(k')} + Q(\dr F^{(\leq k')},\dr\ffi) + Q(\dr F^{(\leq k')} , \dr F^{(\leq k')}) + Q(\dr F^{(\leq k')}, \dr \psi^{(\leq k')}).
\end{align*}
Let us count how many times each terms can be differentiated:
\begin{itemize}
\item we want to estimate $\dr\psi^{(\leq k')}$ using \eqref{decay ffi l}, we can differentiate this term at most $M_{k'}-4$ times,
\item the terms $\dr F^{(\leq k')}$ are better than $\Box F^{(k')}$ which thanks to \eqref{B} can be differentiated at most $N_{k'}-2$ times.
\end{itemize}
Since $M_{k'}-4<N_{k'}-2$, we can differentiate $G^{(k'+1,i)}$ at most $M_{k'}-4$ times, i.e at most $N-1-5k'$ times. Now,  let $|I|\leq N-1-5k'$, we have:
\begin{align*}
|Z^IG^{(k'+1,i)}| & \lesssim |Z^I\Box F^{(k',i)}|+ \sum_{|I_1|+|I_2|\leq|I|+1}\frac{|Z^{I_1}F^{(\leq k')}||Z^{I_2}F^{(\leq k')}|}{1+r}
 \\&\qquad+\sum_{|I_1|+|I_2|\leq |I|+1}\frac{|Z^{I_1+1}F^{(\leq k')}||Z^{I_2+1}\ffi|}{1+r} + \sum_{|I_1|+|I_2|\leq|I|+1}\frac{|Z^{I_1}F^{(\leq k')}||Z^{I_2}\psi^{(\leq k')}|}{1+r} 
 \\&\lesssim \frac{\e}{(1+r)^3},
\end{align*}
where we used \eqref{A}, \eqref{B}, \eqref{decay ffi l} and \eqref{decay ffi}.
\end{proof}

\begin{lem}
There exists a unique solution $F^{(k'+1,i)}\in C^{N-1-5k'}$ to $(\textbf{T}(k'+1,i))$ supported in $A^R$ such that
\begin{align}
    |Z^IF^{(k'+1,i)}|&\lesssim   \frac{\e}{(1+r)},\qquad\text{for}\quad |I|\leq N-1-5k',\label{estimee sur F k+1}\\
    |Z^I\Box F^{(k'+1,i)}|&\lesssim  \frac{\e}{(1+r)^{3}},\qquad\text{for}\quad |I|\leq N-3-5k'.\label{estimee sur box F k+1}
\end{align}
\end{lem}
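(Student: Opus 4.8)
The plan is to apply Proposition \ref{equation de transport prop} directly to the equation $(\textbf{T}(k'+1,i))$, which I have already rewritten in the form $\Ll F^{(k'+1,i)} = F^{(k'+1,i)}\tdr\ffi + G^{(k'+1,i)}$. This is exactly of the shape \eqref{Ll1} with $f = F^{(k'+1,i)}$, $\mu = \tdr\ffi$ and $g = G^{(k'+1,i)}$, and with zero initial data $f_0 = 0$ (recall \eqref{initial data}), which trivially satisfies $\l f_0\r_{C^M}\lesssim\e$. So the whole task reduces to checking the two decay hypotheses of Proposition \ref{equation de transport prop} at the appropriate level of regularity.

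First I would fix $M \vcentcolon= N-3-5k'$ and verify the hypothesis on $\mu = \tdr\ffi$. Using \eqref{decay minkowski} we have $|Z^I\tdr\ffi|\lesssim (1+s)^{-1}\sum_{|J|\leq|I|+1}|Z^J\ffi|$, and then \eqref{decay ffi} gives $|Z^J\ffi|\lesssim \e\sqrt{1+|q|}/(1+s)$ provided $|J|\leq N-3$. Since on $A^R$ we have $1+|q|\lesssim 1$ and $r\sim s$, this yields $(1+r)^2|Z^I\tdr\ffi|\lesssim\e$ for $|I|\leq N-4$; restricting $\tdr\ffi$ to the support of $g$ (which lies in $A^R$) is harmless, and in any case $M = N-3-5k' \leq N-4$ for $k'\geq 1$, so the bound on $\mu$ holds up to order $M$. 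Second, the hypothesis on $g = G^{(k'+1,i)}$ is precisely the content of Lemma \ref{estimee sur G k+1}: it is supported in $A^R$ and satisfies $(1+r)^3|Z^IG^{(k'+1,i)}|\lesssim\e$ for $|I|\leq N-1-5k'$, and again $M = N-3-5k' \leq N-1-5k'$, so this is available at the level of regularity we need. (Here I am also using that $F_0$ being supported in $A_0^R$ is only relevant for $(k,i)=(1,1)$; for $k'+1\geq 2$ the data vanishes, so the support hypothesis $f_0\subset A_0$ is vacuously met.)

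With both hypotheses verified, Proposition \ref{equation de transport prop} produces a unique global solution $F^{(k'+1,i)}\in C^{M} = C^{N-3-5k'}$ supported in $A^R$ with $|Z^IF^{(k'+1,i)}|\lesssim \e/(1+r)$ for $|I|\leq N-3-5k'$. This is slightly weaker in the regularity index than the claimed \eqref{estimee sur F k+1}, which asserts the bound up to $|I|\leq N-1-5k'$; I would recover the two extra orders of differentiability by instead applying Proposition \ref{equation de transport prop} with $M = N-1-5k'$, which is legitimate since both the $\mu$-bound (valid up to $N-4\geq N-1-5k'$ once $k'\geq 1$... in fact one should double-check $N-1-5k' \leq N-4$, i.e. $k'\geq 3/5$, true) and the $g$-bound (valid up to exactly $N-1-5k'$ by Lemma \ref{estimee sur G k+1}) hold at that order. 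Then the estimate \eqref{estimee sur box F k+1} for $\Box F^{(k'+1,i)}$ is \emph{not} obtained by commuting $\Box$ through $\Ll$ (which would lose the desired cubic decay, as explained in the introductory discussion around \eqref{transport modèle}); instead one computes $\Box F^{(k'+1,i)}$ directly in terms of $\Ll F^{(k'+1,i)}$ and lower-order derivatives of $F^{(k'+1,i)}$ using the forthcoming Lemma \ref{d'alembertien}, then bounds the right-hand side using $(\textbf{T}(k'+1,i))$ together with the estimates \eqref{estimee sur F k+1}, \eqref{A}, \eqref{B}, \eqref{C} and \eqref{decay ffi}, which costs two more derivatives and hence gives the bound only for $|I|\leq N-3-5k'$. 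The main obstacle in this lemma is purely bookkeeping: tracking how many derivatives each term in $G^{(k'+1,i)}$ and in the $\Box$-computation can absorb, and confirming that $N-1-5k'$ (resp. $N-3-5k'$) is indeed the binding regularity threshold so that the indices $N_{k'+1} = N+4-5(k'+1) = N-1-5k'$ advertised in Proposition \ref{prop récurrence} come out consistently; there is no analytic difficulty once Proposition \ref{equation de transport prop} and Lemmas \ref{estimee sur G k+1} and \ref{d'alembertien} are in hand.
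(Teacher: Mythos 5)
Your proposal is correct and follows essentially the same route as the paper: apply Proposition \ref{equation de transport prop} with $M=N-1-5k'$, $\mu=\tdr\ffi$ (bounded via \eqref{decay minkowski} and \eqref{decay ffi} in $A^R$ for $|I|\leq N-4$) and $g=G^{(k'+1,i)}$ (Lemma \ref{estimee sur G k+1}), and then obtain \eqref{estimee sur box F k+1} by expressing $\Box F^{(k'+1,i)}$ through Lemma \ref{d'alembertien} rather than commuting $\Box$ with $\Ll$, losing exactly two derivatives. The only cosmetic difference is your initial detour through $M=N-3-5k'$ before settling on the correct regularity index, which the paper takes from the start.
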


\begin{proof}
We apply Proposition \ref{equation de transport prop} with $M=N-1-5k'$, $g=G^{(k'+1,i)}$ and $\mu=\tdr\ffi$ which satisfy the required estimates thanks to Lemma \ref{estimee sur G k+1} and to
\begin{equation}\label{estimation Lu}
    \left| Z^I\tdr\ffi\right|\lesssim |\tdr Z^I\ffi|+\sum_{|J|\leq|I|}\frac{|Z^J\ffi|}{1+s}\lesssim\sum_{|J|\leq|I|+1}\frac{| Z^J \ffi|}{1+s} \lesssim \frac{\e}{(1+r)^2}
\end{equation}
in the region $A^R$ for $|I|\leq N-4$ and where we use \eqref{decay ffi}. This concludes the proof of \eqref{estimee sur F k+1}. Let us now turn to the proof of \eqref{estimee sur box F k+1}. Because of Lemma \ref{d'alembertien}, we have 
\begin{equation*}
    \Box F^{(k'+1,i)}=\delta^{ij}\tdr_i\tdr_jF^{(k'+1,i)}+\underline{L}(F^{(k'+1,i)}\tdr\ffi)+\frac{1}{r}F^{(k'+1,i)}\tdr\ffi+\underline{L}G^{(k'+1,i)}+\frac{1}{r}G^{(k'+1,i)},
\end{equation*}
where $\underline{L}=\dr_t-\dr_r$. We estimate this expression directly, for $I$ a multi-index such that $|I|\leq N-3-5k'$:
\begin{align*}
    |Z^I\Box F^{(k'+1,i)}|&\lesssim |Z^I\tdr^2F^{(k'+1,i)}|+|Z^I\dr(F^{(k'+1,i)}\tdr\ffi)|+|Z^I(r^{-1}F^{(k'+1,i)}\tdr\ffi)|\\&\qquad+|Z^I\dr G^{(k'+1,i)}|+|Z^I(r^{-1}G^{(k'+1,i)})|\\
    &\lesssim \sum_{|J|\leq |I|+2}\frac{|Z^JF^{(k'+1,i)}|}{(1+r)^2}+  \sum_{|I_1|+|I_2|\leq|I|+1}|Z^{I_1}F^{(k'+1,i)}||Z^{I_2}\tdr\ffi| + \sum_{|J|\leq |I|+1}|Z^JG^{(k'+1,i)}| 
    \\&\lesssim  \frac{\e}{(1+r)^{3}}
\end{align*}
where we used commutation property of $Z^I$ and $\tdr$ as in \eqref{estimation Lu}, the Leibniz rule and $|Z^K(r^{-1})|\lesssim r^{-1}$ for every multi-index $K$. Using \eqref{estimee sur F k+1}, \eqref{estimation Lu} and Lemma \ref{estimee sur G k+1} we conclude the proof of \eqref{estimee sur box F k+1}.
\end{proof}

\begin{remark}\label{remarque 5.2}
Note that in the previous proof, we used Lemma \ref{d'alembertien}, which basically shows that if we have a good control on $\Ll f$, then $\Box f$ decays better than two derivatives of $f$, since it behaves like two "good" derivatives of $f$, i.e $\tdr^2 f$. This prevents us from estimating $\Box f$ by commuting $\Box$ and $\Ll$. The decay obtained through this method would not be sufficient since 
\begin{equation*}
[\Ll,\Box]f=-\frac{2}{r}\delta^{ij}\dr_i\tdr_j f + \text{l.o.t}
\end{equation*}
where "l.o.t" stands for lower order terms. This would imply that if $f$ solves \eqref{Ll1}, then $\Box f$ solves
\begin{equation*}
(L-\mu)(r\Box f) = \dr\tdr f + \text{l.o.t}.
\end{equation*}
Since a normal derivative doesn't give extra decay in $A^R$, we would only get $\Box f\sim \frac{1}{r}$ while we need $\frac{1}{r^3}$, given Proposition \ref{equation de transport prop}.
\end{remark}

\subsubsection{The wave equation for the non-oscillating terms}

We now solve $(\textbf{W}(k'+1))$:
\begin{align*}
\Box\psi^{(k'+1)} & = Q(\dr \psi^{(k'+1)},\dr\ffi) + Q(\dr \psi^{(\leq k'-1)} , \dr \psi^{(\leq k'-1)}) 
\\&\quad + Q(\dr F^{(\leq k'+1)} , \dr F^{(\leq k'+1)}) + F^{(\leq k'+1)}\tdr F^{(\leq k'+1)}.
\end{align*}
We define $H^{(k'+1)}$ such that $\Box\psi^{(k'+1)}=Q(\dr\psi^{(k'+1)},\dr\ffi)+H^{(k'+1)}$. Note that $H^{(k'+1)}$ involves $\psi^{(\ell)}$ for $\ell\leq k'-1$ and $F^{(\ell,i)}$ for $\ell\leq k'+1$, thus respecting the triangular structure. We estimate $H^{(k'+1)}$ in the following lemma.

\begin{lem}\label{lem H 2k+2}
$H^{(k'+1)}$ is supported in $B^R$ and the following estimate holds 
\begin{equation}
|Z^I H^{(k'+1)}|\lesssim \frac{\e^2}{(1+s)^3} \qquad \text{for}\quad |I|\leq N-2-5k'.\label{H 2k+2}
\end{equation}
\end{lem}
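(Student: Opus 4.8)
The plan is to unpack the schematic right-hand side, verify the support statement, carry out the derivative count, and then estimate each term using the null structure together with the decay already at our disposal. Explicitly, by definition
\[
H^{(k'+1)} = Q(\dr\psi^{(\leq k'-1)},\dr\psi^{(\leq k'-1)}) + Q(\dr F^{(\leq k'+1)},\dr F^{(\leq k'+1)}) + F^{(\leq k'+1)}\tdr F^{(\leq k'+1)},
\]
and each of the three terms is a product, or a null form, of two functions each supported in $B^R$ (recall $A^R\subseteq B^R$), so $H^{(k'+1)}$ is supported in $B^R$. Note that the coefficients $F^{(k'+1,i)}$ appearing in $F^{(\leq k'+1)}$ are already available, since we solved the transport equations $(\textbf{T}(k'+1,i))$ first and they obey \eqref{estimee sur F k+1}.

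For the derivative count, after applying $Z^I$ and using the Leibniz rule, the commutation identity \eqref{null form}, and the commutation of the $Z$'s with $\tdr$, the quantity $Z^I H^{(k'+1)}$ is a finite sum over $|I_1|+|I_2|\leq|I|$ of products of one factor of type $\dr Z^{I_1}(\cdot)$ and one of type $\tdr Z^{I_2}(\cdot)$ (plus the symmetric ones), where $(\cdot)$ is a $\psi^{(\leq k'-1)}$ or an $F^{(\leq k'+1)}$. Pointwise control of both factors requires $|I|+1\leq N_{k'+1}$ for the $F$-terms and $|I|+1\leq M_{k'-1}-3$ for the $\psi$-terms; the first is the binding one and gives exactly $|I|\leq N-2-5k'$, which is why the range in the statement is $N-2-5k'$ rather than larger.

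For the estimate itself, on the $\psi$-term I would apply \eqref{good null form} and then \eqref{decay minkowski} together with the decay \eqref{decay ffi l} (applicable since $k'-1\leq k'$), which yields
\[
|\dr Z^{I_1}\psi^{(\leq k'-1)}|\lesssim \frac{\e}{(1+|q|)^{1/2}(1+s)},\qquad |\tdr Z^{I_2}\psi^{(\leq k'-1)}|\lesssim \frac{\e\sqrt{1+|q|}}{(1+s)^2},
\]
so that the powers of $1+|q|$ cancel and the product is $\lesssim \e^2(1+s)^{-3}$ on all of $B^R$. On the two $F$-terms I would use that $F^{(\leq k'+1)}$ is supported in $A^R$, where $1+|q|\sim 1$ and $s\sim r$, together with \eqref{A}--\eqref{estimee sur F k+1} and \eqref{decay minkowski}: this gives $|\dr Z^{I_1}F^{(\leq k'+1)}|\lesssim \e(1+r)^{-1}$, $|\tdr Z^{I_2}F^{(\leq k'+1)}|\lesssim \e(1+s)^{-1}(1+r)^{-1}$, and $|Z^{I_1}F^{(\leq k'+1)}|\lesssim \e(1+r)^{-1}$, so each product (the null-form ones carry a $\tdr$ by \eqref{good null form}, the last one by inspection) is again $\lesssim \e^2(1+s)^{-3}$. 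Summing the finitely many contributions yields \eqref{H 2k+2}.

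The hard part here is not analytic but combinatorial: one must keep precise track of how many derivatives each sub-term can absorb so as to land exactly at $|I|\leq N-2-5k'$ and no worse, and one must check that the negative powers of $1+|q|$ produced by \eqref{decay minkowski} are exactly compensated by the factor $\sqrt{1+|q|}$ in the Klainerman--Sobolev decay \eqref{decay ffi l}, so that the bound holds uniformly on the unbounded region $B^R$ and not merely on the bounded-in-$q$ strip $A^R$.
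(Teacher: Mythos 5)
Your proposal is correct and follows essentially the same route as the paper: the same schematic expansion of $H^{(k'+1)}$, the same derivative count (the $F^{(k'+1)}$-terms are the binding ones, giving exactly $|I|\leq N-2-5k'$ since $N-2-5k'<M_{k'-1}-4$), and the same estimate combining the null structure \eqref{good null form}, the weights \eqref{decay minkowski}, the pointwise bounds \eqref{A}, \eqref{estimee sur F k+1} and \eqref{decay ffi l}, with the $(1+|q|)$-powers cancelling for the $\psi$-terms and $r\sim s$ on $A^R$ for the $F$-terms. No gaps; your write-up is if anything slightly more explicit about the support claim and the Leibniz/commutation bookkeeping than the paper's.
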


\begin{proof}
Formally we have 
\begin{align*}
H^{(k'+1)} & = Q(\dr \psi^{(\leq k'-1)} , \dr \psi^{(\leq k'-1)}) + Q(\dr F^{(\leq k'+1)} , \dr F^{(\leq k'+1)}) + F^{(\leq k'+1)}\tdr F^{(\leq k'+1)}.
\end{align*}
Let us count how many times each terms can be differentiated:
\begin{itemize}
\item we want to estimate $\dr\psi^{(\leq k'-1)}$ using \eqref{decay ffi l}, we can differentiate this term at most $M_{k'-1}-4$ times,
\item we want to estimate $\dr F^{(\leq k'+1)}$ using \eqref{estimee sur F k+1} for $\dr F^{(k'+1)}$ or \eqref{A} for $\dr F^{(\leq k')}$ so we can differentiate these terms at most $N-2-5k'$.
\end{itemize}
Since $N-2-5k'<M_{k'-1}-4$, we can differentiate $H^{(k'+1)}$ at most $N-2-5k'$. Now, let $|I|\leq N-2-5k'$, we estimate $Z^IH^{(k'+1)}$ using \eqref{A}, \eqref{estimee sur F k+1} and \eqref{decay ffi l}:
\begin{align*}
|Z^I H^{(2k+2)}| & \lesssim \sum_{|I_1|+|I_2|\leq |I|}\left(  \frac{|Z^{I_1+1}F^{(\leq k'+1)}||Z^{I_2+1}F^{(\leq k'+1)}|}{1+r}+ \frac{|Z^{I_1+1}\psi^{(\leq k'-1)}||Z^{I_2+1}\psi^{(\leq k'-1)}|}{(1+s)(1+|q|)} \right)
\\& \lesssim \frac{\e^2}{(1+s)^3}.
\end{align*}
\end{proof}

\begin{lem}\label{lem psi k'+1}
If $\e$ is small enough, $(\textbf{W}(k'+1))$ admits a unique global solution $\psi^{(k'+1)}\in H^{N-2-5k'}$ supported in $B^R$ and satisfying
\begin{align*}
    \l \dr Z^I\psi^{(k'+1)}\r_{L^2}\lesssim \e\qquad\text{for}\quad|I|\leq N-2-5k'.
\end{align*}
\end{lem}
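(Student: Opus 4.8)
The plan is to run a bootstrap argument entirely analogous to the one used in the proof of Proposition~\ref{prop ffi}, now for the equation
\[
\Box\psi^{(k'+1)} = Q(\dr\psi^{(k'+1)},\dr\ffi) + H^{(k'+1)},
\]
with vanishing initial data and with $H^{(k'+1)}$ already controlled by Lemma~\ref{lem H 2k+2}. The key point is that this is again a semi-linear wave equation whose only quadratic term in the unknown is the \emph{null form} $Q(\dr\psi^{(k'+1)},\dr\ffi)$, so Alinhac's ghost weight method applies verbatim. First I would fix a non-decreasing ghost weight $w$ (any admissible one, e.g.\ the weight of Remark~\ref{remark poids} or simply a bounded one, since $\psi^{(k'+1)}$ is supported in $B^R$ and no sharp $q$-weight is needed here), set
\[
E(t)=\sum_{|I|\leq N-2-5k'}\l w^{\frac12}\dr Z^I\psi^{(k'+1)}\r_{L^2}^2(t),\qquad
S(t)=\int_0^t\sum_{|I|\leq N-2-5k'}\l (w')^{\frac12}\tdr Z^I\psi^{(k'+1)}\r_{L^2}^2(\tau)\,\d\tau,
\]
and define $T$ as the maximal time on which the bootstrap assumption $\l w^{\frac12}\dr Z^I\psi^{(k'+1)}\r_{L^2}(t)\leq C_0\e$ holds for $|I|\leq N-2-5k'$. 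Since the data vanish, $E(0)=0$ and $T>0$.

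Next I would apply the weighted energy estimate of Lemma~\ref{weighted energy estimate} to $Z^I\psi^{(k'+1)}$, use the commutation identities \eqref{wave operator} and \eqref{null form}, and obtain
\[
E(t)+S(t)\lesssim \sum_{|I_1|+|I_2|\leq |I|\leq N-2-5k'}\int_0^t\l w\,\dr Z^{I_1}\psi^{(k'+1)}\,\tdr Z^{I_2}\ffi\,\dr_t Z^I\psi^{(k'+1)}\r_{L^1}\d\tau
\;+\;\sum_{|I|}\int_0^t\l w\, Z^IH^{(k'+1)}\,\dr_t Z^I\psi^{(k'+1)}\r_{L^1}\d\tau.
\]
For the null-form term, I would first recover decay for $Z^I\psi^{(k'+1)}$ from the bootstrap assumption via the Klainerman–Sobolev inequality (Proposition~\ref{prop WKNS}), exactly as in \eqref{WKNS}, losing three derivatives, so that $|\dr Z^{I_1}\psi^{(k'+1)}|$ and $|\tdr Z^{I_1}\psi^{(k'+1)}|$ are pointwise bounded by $C_0\e$ times an integrable-in-$\tau$ rate whenever $|I_1|\leq \frac{N-2-5k'}{2}$; here I use crucially that $\ffi$ already satisfies the sharp decay \eqref{decay ffi}. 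Splitting into the cases $|I_1|\leq\frac N2$ and $|I_2|\leq\frac N2$ and applying Cauchy–Schwarz together with $2ab\leq a^2+b^2$ and \eqref{w1 w1'} to absorb a good-derivative square into $S(t)$, this term contributes $\lesssim C_0^3\e^3 + C_0\e\,S(t)$, as in Proposition~\ref{prop ffi}. For the source term, Lemma~\ref{lem H 2k+2} gives $|Z^IH^{(k'+1)}|\lesssim \e^2(1+s)^{-3}$, which is in $L^2_x$ with $(1+\tau)^{-3/2}$ decay (using compact $q$-support), so by Cauchy–Schwarz and the bootstrap assumption it contributes $\lesssim \e^2 C_0\e$ after time integration, or more cleanly $\lesssim \e^4 + C_0\e\,S(t)$ type bounds — in any case, a term of the form $C\e^3$ up to constants independent of $C_0$ once one notes $\l Z^IH^{(k'+1)}\r_{L^2}\lesssim\e^2$ so the cross term is $\lesssim \e^2\cdot C_0\e$.

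Collecting, $E(t)\lesssim C\e^3 + C\sqrt{C_0\e}\,(C_0\e)$ (the last from absorbing $C_0\e S(t)$ and using $S(t)\lesssim E$-type control), hence $\l w^{\frac12}\dr Z^I\psi^{(k'+1)}\r_{L^2}\leq C\e^{3/2}+C\sqrt{C_0\e}\,C_0\e$; choosing $C_0$ large so that $C\e^{1/2}\leq \frac{C_0}{4}$ (recall $\e\leq 1$) and then $\e_0$ small so that $CC_0\e\leq\frac{C_0}{4}$ and $C\sqrt{C_0\e}\leq\frac14$, the bootstrap assumption improves to a constant $\frac{C_0}{2}$. This contradicts the maximality of $T$, so $T$ equals the lifespan, the energy stays bounded, and the standard continuation criterion gives global existence with the claimed bound; uniqueness is immediate from the local theory for this semi-linear wave equation. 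The only mild subtlety — and the place I would be most careful — is bookkeeping the derivative count: one needs $N-2-5k'$ derivatives on $\psi^{(k'+1)}$ while having enough room ($\frac{N-2-5k'}{2}+3$ derivatives, plus the derivatives needed on $\ffi$ and on $H^{(k'+1)}$) for the Klainerman–Sobolev step, which is exactly why the hypothesis $N\geq 6+5K$ and the loss of $5$ derivatives per level in $N_k,M_\ell$ were arranged as they were; since $k'\leq K-2$ everything fits.
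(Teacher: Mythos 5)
Your proposal is correct and follows essentially the same route as the paper: a ghost-weight bootstrap based on Lemma \ref{weighted energy estimate}, the commutation and null-form bounds \eqref{wave operator}--\eqref{null form}, the pointwise decay \eqref{decay ffi} of $\ffi$, the source estimate of Lemma \ref{lem H 2k+2}, and absorption of the good-derivative contributions into the spacetime term via \eqref{w1 w1'}. The only deviation is cosmetic: since the equation is linear in $\psi^{(k'+1)}$ and \eqref{decay ffi} holds for all $|I|\leq N-3$, the paper takes the pointwise factor from $\ffi$ in every term and never needs the $|I_1|\leq N/2$ versus $|I_2|\leq N/2$ splitting or Klainerman--Sobolev applied to $\psi^{(k'+1)}$ itself (also note the ghost-weight term $S(t)$ is genuinely needed to handle $\tdr Z^{J_1}\psi^{(k'+1)}\,\dr Z^{J_2}\ffi$, so the weight cannot be taken trivial even though $\psi^{(k'+1)}$ is supported in $B^R$).
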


\begin{proof}
In order to prove Lemma \ref{lem psi k'+1} we use a continuity argument. We let $T_0$ the maximal time of existence of a solution $\psi^{(k'+1)}$ to $(\textbf{W}(k'+1))$, which satisfies 
\begin{equation*}
    T_0<+\infty \iff \lim_{t\to T_0^-}\sum_{|I|\leq N-2-5k'}\l w^{\frac{1}{2}}\dr Z^I\psi^{(k'+1)}\r_{L^2}(t)=+\infty
\end{equation*}
where $w$ is defined in Theorem \ref{theo principal}. Because of the support properties of $H^{(k'+1)}$ and Lemma \ref{speed}, the function $\psi^{(k'+1)}$ is supported in $B_R$ (therefore the exterior exponent of $w$ plays no role here). We make the following bootstrap assumption:
\begin{equation}\label{bootstrap sur ffi 2k+2}
\l w^{\frac{1}{2}}\dr Z^I\psi^{(k'+1)}\r_{L^2}(t)\leq \e\qquad\text{for}\quad|I|\leq N-2-5k'.
\end{equation}
Since the initial data for $\psi^{(k'+1)}$ vanish, we can assume that \eqref{bootstrap sur ffi 2k+2} holds on some time interval $[0,T]$ with $T>0$. Moreover, we assume that $T$ is the maximal time so that \eqref{bootstrap sur ffi 2k+2} holds. We define
\begin{align*}
E_{k'+1}(t)&\vcentcolon=\sum_{|I|\leq N-2-5k'}\l w^{\frac{1}{2}}\dr Z^I\psi^{(k'+1)}\r^2_{L^2}(t),\\
S_{k'+1}(t)&\vcentcolon=\int_0^t\sum_{|I|\leq  N-2-5k'}\l (w')^{\frac{1}{2}}\tdr Z^I\psi^{(k'+1)}\r_{L^2}^2(\tau)\d\tau.
\end{align*}
We apply the weighted energy estimate of Lemma \ref{weighted energy estimate},  the Cauchy-Schwarz inequality and our bootstrap assumption \eqref{bootstrap sur ffi 2k+2} to obtain (recall that the initial data for $\psi^{(k'+1)}$ vanish):
\begin{align}
E_{k'+1}(t)+S_{k'+1}(t) & \lesssim \sum_{|I|\leq N-2-5k'} \int_0^t \l w\Box Z^I \psi^{(k'+1)} \dr Z^I\psi^{(k'+1)} \r_{L^1(B_R\cap \Sigma_\tau)} \d\tau\nonumber
\\&\lesssim \sum_{|I|\leq N-2-5k'} \int_0^t \l w^\half  \Box Z^I \psi^{(k'+1)}  \r_{L^2(B_R\cap \Sigma_\tau)}\l w^\half \dr Z^I\psi^{(k'+1)} \r_{L^2} \d\tau. \label{energie psi k'+1}
\end{align}
Now if $|I|\leq N-2-5k'$, using \eqref{wave operator} and \eqref{null form} we obtain:
\begin{align*}
 \l w^\half  \Box Z^I \psi^{(k'+1)}  \r_{L^2(B_R\cap \Sigma_\tau)}  &\lesssim  \sum_{|J_1|+|J_2|\leq|I|}  \l w^\half \dr Z^{J_1}\psi^{(k'+1)} \tdr Z^{J_2}\ffi  \r_{L^2(B^R\cap\Sigma_\tau)}
\\&\quad + \sum_{|J_1|+|J_2|\leq |I|}  \l w^\half \tdr Z^{J_1}\psi^{(k'+1)} \dr Z^{J_2}\ffi  \r_{L^2(B^R\cap\Sigma_\tau)}
\\&\quad + \sum_{|J|\leq  |I|} \l w^\half Z^JH^{(k'+1)} \r_{L^2(B^R\cap\Sigma_\tau)}
\\&=\vcentcolon A + B + C.
\end{align*}
We start with the easiest term, i.e $C$, for which we use \eqref{H 2k+2} and the fact that $w$ is bounded on $B_R$:
\begin{align*}
C &\lesssim \e^2 \left( \int_0^{\tau +R} \frac{\d r}{(1+r+\tau)^4} \right)^\half \lesssim \frac{\e^2}{(1+\tau)^{\frac{3}{2}}}
\end{align*}
which implies $C\lesssim \e^2$.  For $A$,  we use \eqref{decay ffi} and $|q|\leq s$ to get
\begin{equation*}
|\tdr Z^{J_2}\ffi|\lesssim \frac{\e}{(1+s)^{\frac{3}{2}}}
\end{equation*}
for all $|J_2|\leq N-2-5k'$.This gives
\begin{align*}
A & \lesssim \e \sum_{|I|\leq N-2-5k'} \frac{1}{(1+\tau)^{\frac{3}{2}}} \l w^\half \dr Z^{I}\psi^{(k'+1)}\r_{L^2}  \lesssim \frac{\e^2}{(1+\tau)^{\frac{3}{2}}}
\end{align*}
where we also used \eqref{bootstrap sur ffi 2k+2}. Let us now look at $B$, by first using \eqref{decay ffi} to obtain
\begin{equation*}
|\dr Z^{J_2}\ffi|\lesssim \frac{\e}{(1+s)^{\half+\frac{\nu}{2}}(1+|q|)^{1-\frac{\nu}{2}}}
\end{equation*}
for $\nu$ defined at the beginning of this proof.  Using \eqref{w1 w1'} this gives
\begin{align*}
B&\lesssim \frac{\e}{(1+\tau)^{\frac{1}{2}+\frac{\nu}{2}}} \sum_{|J|\leq |I|}  \l \frac{w^\half}{(1+|q|)^{1-\frac{\nu}{2}}} \tdr Z^{J}\psi^{(k'+1)}  \r_{L^2(B^R\cap\Sigma_\tau)}
\\&\lesssim  \frac{\e}{(1+\tau)^{\frac{1}{2}+\frac{\nu}{2}}} \sum_{|J|\leq |I|}  \l (w')^\half \tdr Z^{J}\psi^{(k'+1)}  \r_{L^2(B^R\cap\Sigma_\tau)}.
\end{align*}
Putting our estimates on $A$, $B$ and $C$ together we obtain from \eqref{energie psi k'+1}:
\begin{align*}
&E_{k'+1}(t)+S_{k'+1}(t) 
\\& \lesssim \sum_{|I|\leq N-2-5k'} \int_0^t (A+C)\l w^\half \dr Z^I\psi^{(k'+1)} \r_{L^2} \d\tau + \sum_{|I|\leq N-2-5k'} \int_0^t B\l w^\half \dr Z^I\psi^{(k'+1)} \r_{L^2} \d\tau
\\&\lesssim \e^3 \int_0^t \frac{\d\tau}{(1+\tau)^{\frac{3}{2}}} 
\\&\quad + \sum_{|J|\leq |I|\leq N-2-5k'} \int_0^t \frac{\e}{(1+\tau)^{\frac{1}{2}+\frac{\nu}{2}}}  \l (w')^\half \tdr Z^{J}\psi^{(k'+1)}  \r_{L^2(B^R\cap\Sigma_\tau)}\l w^\half \dr Z^I\psi^{(k'+1)} \r_{L^2} \d\tau
\\&\lesssim \e^3 \int_0^t \frac{\d\tau}{(1+\tau)^{\frac{3}{2}}} 
\\&\quad +\sum_{ |I|\leq N-2-5k'}\left(  \e \int_0^t   \l (w')^\half \tdr Z^{I}\psi^{(k'+1)}  \r_{L^2(B^R\cap\Sigma_\tau)}^2\d\tau + \e\int_0^t \frac{1}{(1+\tau)^{1+\nu}}\l w^\half \dr Z^I\psi^{(k'+1)} \r_{L^2} ^2\d\tau\right)
\\&\lesssim \e^3 \int_0^t \frac{\d\tau}{(1+\tau)^{\frac{3}{2}}} + \e S_{k'+1}(t) +  \e^3 \int_0^t \frac{\d\tau}{(1+\tau)^{1+\nu}}
\end{align*}
where for the integral involving $A+C$ we use \eqref{bootstrap sur ffi 2k+2} and for the integral involving $B$ we used first $2ab\leq a^2 + b^2$ and \eqref{bootstrap sur ffi 2k+2}. Therefore, we have proved that there exists a constant $C>0$ such that for all $t\in[0,T]$
\begin{equation*}
E_{k'+1}(t)+(1-C\e)S_{k'+1}(t)\leq C\e^3. 
\end{equation*}
We now choose $\e$ so that $\e\leq \frac{1}{2C}$. This implies that for all $t\in[0,T]$ 
\begin{equation*}
E_{k'+1}(t) \leq \frac{1}{2}\e^2.
\end{equation*}
Therefore, we improved \eqref{bootstrap sur ffi 2k+2}, thus proving that actually $T=T_0$. This breaks the blow-up criterion, proving that $T_0=+\infty$.  Note that since $\psi^{(k'+1)}$ is supported in $B_R$ the ghost weight $w$ is bounded and we can change \eqref{bootstrap sur ffi 2k+2} to the estimate of the lemma.
\end{proof}

This concludes the induction and thus the proof of Proposition \ref{prop récurrence}.

\subsection{The equation for the remainder}\label{section h}

The proof of the global existence for \eqref{equation sur h} follows the same structure as the proof of Proposition \ref{prop ffi}.  

\subsubsection{Bootstrap assumptions and first consequences}

We use again the ghost weight $w$ defined in Theorem \ref{theo principal}. We let $T_0$ be the maximal time of existence of a solution $h_{\lambda}$, which satisfies the following blow-up criterion:
\begin{equation*}
    T_0<+\infty \iff \lim_{t\to T_0^-}\sum_{|I|\leq N'}\l w^{\frac{1}{2}}\dr Z^Ih_{\lambda}\r_{L^2}(t)=+\infty,
\end{equation*}
where $N'$ is some large enough integer. In terms of number of derivation, the limiting term in the equation for $h_\lambda$ is $\Box F^{(K,i)}$, which we can differentiate at most $N+2-5K$ thanks to \eqref{B}. Therefore we set $N'=N+2-5K$, which satisfies $N'\geq 8$.  We define two rescaled energies:
\begin{align*}
\mathcal{E}_\lambda(t)&\vcentcolon=\sum_{|I|\leq N'}\lambda^{-2K+2|I|}\l w^{\frac{1}{2}}\dr Z^Ih_{\lambda} \r_{L^2}^2(t),
\\ \mathcal{S}_{\lambda}(t)&\vcentcolon=\sum_{|I|\leq N'}\lambda^{-2K+2|I|}\int_0^t\l (w')^{\frac{1}{2}}\tdr Z^Ih_{\lambda} \r_{L^2}^2(\tau)\d\tau.
\end{align*}
We make the following bootstrap assumptions on $h_{\lambda}$:
\begin{align}
    \sup_{t\in[0,T]}\mathcal{E}_\lambda(t)\leq C_2^2\e^2.\label{BA h 2}
\end{align}
Here $C_2$ is a constant to be choosen later. We let $T<T_0$ be the maximal time such that these estimates hold on $[0,T]$. Since $(h_\lambda,\dr_th_\lambda)_{|t=0}=0$, this bootstrap assumption is satisfied at $t=0$ and $T>0$. Note that \eqref{BA h 2} immediately implies that for every multi-index $|I|\leq N'$ and for every $t\in[0,T]$ we have
\begin{equation}
\l w^{\frac{1}{2}}\dr Z^I h_{\lambda} \r_{L^2}(t)\leq C_2\e\lambda^{K-|I|}.\label{BA h cool}
\end{equation}

Recall \eqref{equation sur h}:
\begin{align*}
\Box h_\lambda & =   \mathcal{N}(h_\lambda) + \lambda^K Q(\dr F,\dr\ffi)\T_{\lambda} + \lambda^K \Box F \T_\lambda
\\&\quad +\lambda^K \lambda^{+} \left( F\tdr F  \T_{\lambda} + Q(\dr F,\dr F) \T_{\lambda} + Q(\dr F,\dr\psi)\T_{\lambda} + F\tdr\psi\T_{\lambda} + Q(\dr\psi,\dr\psi) \right)
\end{align*}
To capture the different support properties, we define 
\begin{align*}
\mathbf{A}_\lambda & =  \lambda^K Q(\dr F,\dr\ffi)\T_{\lambda} + \lambda^K \Box F \T_\lambda +\lambda^K \lambda^{+} \left( F\tdr F  \T_{\lambda} + Q(\dr F,\dr F) \T_{\lambda} + Q(\dr F,\dr\psi)\T_{\lambda} + F\tdr\psi\T_{\lambda}  \right)
\\ \mathbf{B}_\lambda & =   \lambda^K \lambda^{+}Q(\dr\psi,\dr\psi)
\end{align*}
such that 
\begin{align}\label{decomposition}
\Box h_\lambda & =   \mathcal{N}(h_\lambda) + \mathbf{A}_\lambda + \mathbf{B}_\lambda.
\end{align}
Thanks to the vanishing of the initial data of $h_\lambda$ and the fact that the RHS of \eqref{decomposition} is supported in $B_R$, we can apply Lemma \ref{speed} to prove that $h_\lambda$ is also supported in $B_R$.

\subsubsection{Decay estimates}

We start by deriving some decay for $h_\lambda$ and the non-homogeneous terms in $\Box h_\lambda$. This again follows from the weighted Klainerman-Sobolev inequality and we don't repeat the argument. However, we state it in a lemma to emphasize the appearance of the factor $\lambda^{K-3-|I|}$.

\begin{lem}\label{rotation 2}
The following estimate holds on $[0,T]$
\begin{equation}
|  Z^I h_{\lambda}|\lesssim \frac{C_2\e\sqrt{1+|q|}}{1+s}\lambda^{K-3-|I|}\qquad\text{for}\quad|I|\leq N'-3.\label{WKNS 2}
\end{equation}
\end{lem}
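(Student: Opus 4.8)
The plan is to derive \eqref{WKNS 2} directly from the bootstrap assumption \eqref{BA h cool} by applying the weighted Klainerman-Sobolev inequality of Proposition \ref{prop WKNS}, exactly mirroring the argument already carried out in the proof of Proposition \ref{prop ffi} to pass from \eqref{BA u 2} to \eqref{WKNS}. First I would fix a multi-index $I$ with $|I|\leq N'-3$ and apply \eqref{inequality WKNS} to the function $f=Z^I h_\lambda$: since $h_\lambda$ is compactly supported (indeed supported in $B_R$ by the discussion following \eqref{decomposition}), this gives
\begin{equation*}
|w^{\frac{1}{2}}(q) Z^I h_\lambda| \lesssim \frac{1}{(1+s)\sqrt{1+|q|}}\sum_{|J|\leq 3}\l w^{\frac{1}{2}}(q) Z^{J} Z^I h_\lambda\r_{L^2}.
\end{equation*}
Each term $Z^J Z^I h_\lambda$ on the right-hand side has order at most $|I|+3\leq N'$, so we may substitute the bootstrap bound \eqref{BA h cool}, which controls $\l w^{\frac{1}{2}}\dr Z^{I'} h_\lambda\r_{L^2}$ for all $|I'|\leq N'$; to reach $\l w^{\frac{1}{2}} Z^J Z^I h_\lambda\r_{L^2}$ itself (an undifferentiated $L^2$ norm) rather than a $\dr$-differentiated one, one uses that $h_\lambda$ vanishes at spatial infinity together with the vanishing initial data, or more simply a Poincaré/Hardy-type inequality on the compact support, to bound it by $\l w^{\frac12}\dr Z^{J'}Z^I h_\lambda\r_{L^2}$ with $|J'|\leq |J|$; this costs no extra power of $\lambda$ beyond what is already encoded. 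Thus the sum is $\lesssim C_2\e\,\lambda^{K-(|I|+3)} = C_2\e\,\lambda^{K-3-|I|}$, and dividing through by $w^{\frac12}(q)\geq 1$ in the interior region $q<0$ (where $w^{\frac12}\sim 1$) gives
\begin{equation*}
|Z^I h_\lambda| \lesssim \frac{C_2\e}{(1+s)\sqrt{1+|q|}}\lambda^{K-3-|I|}.
\end{equation*}
Finally, to recover the stated estimate with the numerator $\sqrt{1+|q|}$ rather than $1/\sqrt{1+|q|}$, I would integrate this pointwise bound along the lines of constant $s$ starting from the light cone $q=0$ (using that $h_\lambda$ vanishes on the initial slice outside $B_R$, and noting that on the support $|q|\leq s$), exactly as in the passage from \eqref{decay Z I ffi} to \eqref{WKNS}: integrating $ (1+|q'|)^{-1/2}$ over $q'\in[q,0]$ produces a factor $\sqrt{1+|q|}$, yielding $|Z^I h_\lambda|\lesssim C_2\e\,\frac{\sqrt{1+|q|}}{1+s}\lambda^{K-3-|I|}$, which is \eqref{WKNS 2}.

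The only genuinely new feature compared to Proposition \ref{prop ffi} is the bookkeeping of the power $\lambda^{K-3-|I|}$: each of the three extra vector fields applied via Klainerman-Sobolev costs one power of $\lambda^{-1}$ against the $\lambda^{K-|I|}$ in the bootstrap, which is precisely the mechanism forcing the loss of three derivatives and, in turn, the lower bound $K\geq 4$ discussed in the introduction. The main (mild) obstacle is the transition between the $\dr$-weighted $L^2$ norms appearing in \eqref{BA h cool} and the undifferentiated $L^2$ norm appearing on the right of the Klainerman-Sobolev inequality; since $h_\lambda$ is compactly supported in $B_R$ with vanishing data this is handled by a standard integration argument (integrating $\dr_r(Z^I h_\lambda)$ from the support boundary inward) and introduces no new powers of $\lambda$. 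Everything else is a verbatim repetition of the interior-integration scheme already used for $\ffi$ and $\psi^{(k'+1)}$.
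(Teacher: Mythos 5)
There is a genuine gap, and it lies exactly at the point your proposal identifies as a ``mild obstacle''. The paper's argument (the one it declines to repeat) mirrors the proof of Proposition \ref{prop ffi}: one applies the Klainerman--Sobolev inequality of Proposition \ref{prop WKNS} to the \emph{differentiated} function $\dr Z^I h_\lambda$, uses that $[Z,\dr]$ is a constant-coefficient combination of the $\dr_\alpha$ so that $Z^J\dr Z^I h_\lambda$ is controlled by $\dr Z^{I'}h_\lambda$ with $|I'|\le |I|+3\le N'$, invokes \eqref{BA h cool} (worst factor $\lambda^{K-3-|I|}$) to obtain the pointwise bound $|\dr Z^I h_\lambda|\lesssim C_2\e\,\lambda^{K-3-|I|}(1+s)^{-1}(1+|q|)^{-1/2}$, and only then integrates this bound in $q$ along constant-$s$ lines starting from $q=R$, where $Z^I h_\lambda$ vanishes because $h_\lambda$ is supported in $B^R$; the integration of $(1+|q'|)^{-1/2}$ is what produces the factor $\sqrt{1+|q|}$ in \eqref{WKNS 2}. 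Your proposal instead applies Klainerman--Sobolev to the undifferentiated $Z^I h_\lambda$, so the right-hand side involves $\l w^{1/2}Z^J Z^I h_\lambda\r_{L^2}$, which the bootstrap \eqref{BA h cool} does not control, and the Poincar\'e/Hardy fix you invoke is not available uniformly in time: on $\Sigma_t$ the support of $h_\lambda$ is the ball $\{r\le t+R\}$, so the constant in $\l u\r_{L^2}\lesssim\l\dr u\r_{L^2}$ for functions vanishing at its boundary grows like $1+t$ (test a bump at scale $t+R$), and inserting it destroys the $(1+s)^{-1}$ decay. What \emph{is} true uniformly is a Hardy-type inequality with an extra factor $(1+|q|)^{-1}$ on the left-hand side, but using it would change the output of the Sobolev step (and would in fact deliver $\sqrt{1+|q|}/(1+s)$ directly), and that is not the argument you wrote.

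The final step of your proposal is also logically off. Integration along constant-$s$ lines is the mechanism that converts a pointwise bound on the transverse derivative (controlled by $|\dr Z^I h_\lambda|$) into a bound on $Z^I h_\lambda$ itself --- this is exactly the passage from \eqref{decay Z I ffi} to \eqref{WKNS} --- whereas you propose to ``integrate'' a bound you claim to already hold for $Z^I h_\lambda$; at that stage there is nothing left to integrate, and the intermediate estimate you assert, $|Z^I h_\lambda|\lesssim C_2\e\,\lambda^{K-3-|I|}(1+s)^{-1}(1+|q|)^{-1/2}$, is strictly stronger than \eqref{WKNS 2}, which should have been a warning sign that the bookkeeping of the argument had gone astray. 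Your $\lambda$-accounting (three Klainerman--Sobolev vector fields costing $\lambda^{-3}$ against $\lambda^{K-|I|}$ from \eqref{BA h cool}) and the support observation are correct, but the proof must be routed through $\dr Z^I h_\lambda$, exactly as in Proposition \ref{prop ffi}.
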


The following lemma estimates the non-homogeneous terms in $\Box h_\lambda$.

\begin{lem}\label{rotation 1}
The function $\mathbf{A}_\lambda$ are supported in $A^R$ and satisfies
\begin{equation}
| Z^I\mathbf{A}_\lambda | \lesssim \frac{\e\lambda^{K-|I|}}{(1+s)^3}\label{estim Aq},\qquad\text{for}\quad|I|\leq N'.
\end{equation}
The function $\mathbf{B}_\lambda$ is supported in $B^R$ and satisfies 
\begin{equation}\label{Bq estimee}
|Z^I\mathbf{B}_\lambda|\lesssim \frac{\e^2}{(1+s)^3} \lambda^{K+1} \qquad\text{for}\quad|I|\leq N'.
\end{equation}
\end{lem}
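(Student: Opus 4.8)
The plan is to bound $\mathbf{A}_\lambda$ and $\mathbf{B}_\lambda$ directly, term by term, using the estimates on $\ffi$, $F^{(k,i)}$ and $\psi^{(\ell)}$ already established, namely \eqref{L2 ffi}, \eqref{A}, \eqref{B}, \eqref{C}, \eqref{decay ffi} and the Klainerman--Sobolev bound on $\psi^{(\ell)}$ derived from \eqref{C} as in \eqref{decay ffi l}, together with the null form estimate \eqref{good null form}, the commutation identities \eqref{wave operator}--\eqref{null form}, and the elementary fact that $|Z^I\T_\lambda^{(k,i)}|\lesssim_{R,k,i,|I|}\lambda^{-|I|}$ on $\{r\geq R^{-1}\}$: applying a Minkowski vector field to $\T_\lambda^{(k,i)}$ differentiates the phase $\tfrac{i(t-r)}{\lambda}$, the fields $\dr_\alpha$ contributing a coefficient bounded on $\{r\geq R^{-1}\}$ times $\lambda^{-1}$, the spatial rotations $\Omega_{ij}$ annihilating $t-r$, and $S,\Omega_{0i}$ contributing a factor proportional to $t-r$, which is bounded by $R$ on $A^R$; one then iterates. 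For the supports: every term building $\mathbf{A}_\lambda$ carries at least one factor from the family $F^{(k,i)}$, supported in $A^R$ by Proposition \ref{prop récurrence}, so $\mathbf{A}_\lambda$ is supported in $A^R$; and $\mathbf{B}_\lambda=\lambda^K\lambda^{+}Q(\dr\psi,\dr\psi)$ only involves the $\psi^{(\ell)}$, supported in $B^R$, so $\mathbf{B}_\lambda$ is supported in $B^R$. On $A^R$ we use freely that $r\sim t\sim s$ and $\sqrt{1+|q|}\lesssim_R 1$.

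For $\mathbf{A}_\lambda$, I would apply $Z^I$ with $|I|\leq N'$, expand by Leibniz and \eqref{null form}, and bound each differentiated $\T_\lambda$ factor by the corresponding negative power of $\lambda$. The governing term is $\lambda^K\Box F\,\T_\lambda$: by \eqref{B} one may differentiate $\Box F^{(k,i)}$ up to $N_k-2$ times, and for $k=K$ this is exactly $N_K-2=N+2-5K=N'$ — which is precisely why $N'$ is chosen this way — so on $A^R$ one has $|Z^{I_1}\Box F|\lesssim\e(1+s)^{-3}$, hence $|Z^I(\lambda^K\Box F\,\T_\lambda)|\lesssim\e\lambda^{K-|I|}(1+s)^{-3}$, which is exactly \eqref{estim Aq}. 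For $\lambda^K Q(\dr F,\dr\ffi)\T_\lambda$ one uses the null structure \eqref{good null form} together with \eqref{decay minkowski}, \eqref{A} and \eqref{decay ffi} to get $|Q(\dr Z^{I_1}F,\dr Z^{I_2}\ffi)|\lesssim\e^2(1+s)^{-3}$ for $|I_1|,|I_2|\leq|I|$, hence a contribution $\lesssim\e^2\lambda^{K-|I|}(1+s)^{-3}$; the derivative counts close because $K\geq 4$ guarantees $N'+1\leq N_K$ and $N'+1\leq N-3$.

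Each of the remaining pieces $F\tdr F$, $Q(\dr F,\dr F)$, $Q(\dr F,\dr\psi)$, $F\tdr\psi$ of $\mathbf{A}_\lambda$ is likewise $\lesssim\e^2(1+s)^{-3}$ after any number $\leq N'$ of derivatives (again $K\geq 4$ makes $N'+1\leq M_\ell-3$), and each carries the extra prefactor $\lambda^{+}$, i.e.\ a power $\lambda^{K+j}$ with $j\geq1$, which beats the $\lambda^{-|I|}$ produced by the $\T_\lambda$ factors since $\lambda\leq1$; their contribution is thus $\lesssim\e^2\lambda^{K-|I|}(1+s)^{-3}$. Summing and using $\e^2\lesssim\e$ for $\e$ small yields \eqref{estim Aq}. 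For $\mathbf{B}_\lambda$ there is no oscillating factor, so $Z^I$ produces no negative power of $\lambda$; by \eqref{null form}, \eqref{good null form} and \eqref{decay minkowski} one has $|Z^I Q(\dr\psi,\dr\psi)|\lesssim\sum_{|I_1|+|I_2|\leq|I|}(1+|q|)^{-1}(1+s)^{-1}|Z^{I_1+1}\psi|\,|Z^{I_2+1}\psi|$, and the Klainerman--Sobolev bound on $\psi^{(\ell)}$ (valid for $|I_j|+1\leq M_\ell-3$, which holds as $K\geq 4$) bounds each $|Z^{I_j+1}\psi|$ by $\e\sqrt{1+|q|}\,(1+s)^{-1}$, so the two $\sqrt{1+|q|}$ factors cancel the $(1+|q|)^{-1}$ and one is left with $|Z^I Q(\dr\psi,\dr\psi)|\lesssim\e^2(1+s)^{-3}$; multiplying by $\lambda^{K+j}$, $j\geq1$, and using $\lambda\leq1$ gives \eqref{Bq estimee}.

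The argument presents no genuine obstacle. The only points requiring care are the bookkeeping of the number of derivatives — the bottleneck being $\Box F^{(K,i)}$ through \eqref{B}, which is exactly what forces the choice $N'=N+2-5K$ — and, for $\mathbf{B}_\lambda$, the cancellation near the light cone between the $(1+|q|)^{1/2}$ growth of $Z^I\psi$ and the $(1+|q|)^{-1}$ loss in passing from $\dr\psi$ to $Z\psi$, which is precisely where the null condition on $Q$ is used.
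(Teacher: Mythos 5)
Your proposal is correct and follows essentially the same route as the paper: a direct term-by-term Leibniz expansion using $|Z^I\T_\lambda^{(k,i)}|\lesssim\lambda^{-|I|}$ on the support, the established bounds \eqref{A}, \eqref{B}, \eqref{decay ffi}, \eqref{decay ffi l}, and, for $\mathbf{B}_\lambda$, the null-form gain $\frac{1}{(1+s)(1+|q|)}$ cancelling the $\sqrt{1+|q|}$ growth of $Z\psi$, with the derivative count limited by $\Box F^{(K,i)}$ exactly as in the paper's choice $N'=N+2-5K$. The only (harmless) difference is that you justify the bound on $Z^I\T_\lambda$ in detail, which the paper simply asserts as \eqref{estimee Z T}.
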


\begin{proof}Since
\begin{equation}
 | Z^I\mathbb{T}_{\lambda}^{(\alpha,\beta)} |\leq C_{I,\alpha,\beta}\lambda^{-|I|},\label{estimee Z T}
\end{equation}
for some $C_{I,\alpha,\beta}>0$, we can directly estimate $\mathbf{A}_\lambda$ using its expression:
\begin{align*}
|Z^I\mathbf{A}_\lambda|&\lesssim \sum_{|I_1|+|I_2|\leq |I|}\lambda^{K-|I_2|}|Z^{I_1}\Box F| 
\\&\quad +\sum_{|I_1|+|I_2|+|I_3|\leq |I|}\lambda^{K-|I_3|}\frac{|Z^{I_1+1}F|}{1+r}\left( |Z^{I_2+1}F|+|Z^{I_2+1}\psi| + |Z^{I_2+1}\ffi| \right)
\\&\lesssim \frac{\e}{(1+s)^3}\sum_{|J|\leq|I|}\lambda^{K-|J|}
\end{align*}
where we used \eqref{A}, \eqref{B},\eqref{decay ffi l} and \eqref{estimee Z T}. For $\mathbf{B}_\lambda$ we recall \eqref{decay ffi l}, which implies that for all $2\leq \ell \leq K-1$
\begin{equation*}
|Z^I\psi^{(\ell)}|\lesssim \e\frac{\sqrt{1+|q|}}{1+s},\qquad\text{for}\quad |I|\leq M_{K-1}-3.\\
\end{equation*}
Recalling the expression of $\mathbf{B}_\lambda$ concludes the proof:
\begin{equation*}
|Z^I\mathbf{B}_\lambda|\lesssim\lambda^{K+1} \sum_{|I_1|+|I_2|\leq |I|}  \frac{|Z^{I_1+1}\psi||Z^{I_2+1}\psi|}{(1+s)(1+|q|)}\lesssim \frac{\e^2}{(1+s)^3}\lambda^{K+1}
\end{equation*}
since $N'+1\leq M_{K-1}-3$.
\end{proof}

\subsubsection{Energy estimates} We conclude the bootstrap by improving \eqref{BA h 2}.

\begin{prop}\label{last proposition}
For large enough $C_2$ and small enough $\e$ (depending on $C_2$), we have for every $t\in[0,T]$
\begin{equation*}
\mathcal{E}_\lambda(t)\leq\frac{1}{2}C_2^2\e^2.
\end{equation*}
\end{prop}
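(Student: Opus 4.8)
The plan is to apply the weighted energy estimate of Lemma \ref{weighted energy estimate} to each $Z^I h_\lambda$ for $|I|\le N'$, multiply by the rescaling factor $\lambda^{-2K+2|I|}$, and sum, then close the bootstrap \eqref{BA h 2}. Using \eqref{wave operator} we bound $\Box Z^I h_\lambda$ in $L^2$ by $\sum_{|I'|\le|I|}\l w^{1/2}Z^{I'}\Box h_\lambda\r_{L^2}$, and then use the decomposition \eqref{decomposition}, $\Box h_\lambda=\mathcal N(h_\lambda)+\mathbf A_\lambda+\mathbf B_\lambda$, to split the source into three contributions. After a Cauchy–Schwarz step (as in \eqref{energie psi k'+1}), this yields
\begin{align*}
\mathcal E_\lambda(t)+\mathcal S_\lambda(t)&\lesssim \sum_{|I|\le N'}\lambda^{-2K+2|I|}\int_0^t\l w^{1/2}\dr Z^I h_\lambda\r_{L^2}\Big(\l w^{1/2}Z^{(\le I)}\mathcal N(h_\lambda)\r_{L^2}\\
&\qquad\qquad+\l w^{1/2}Z^{(\le I)}\mathbf A_\lambda\r_{L^2}+\l w^{1/2}Z^{(\le I)}\mathbf B_\lambda\r_{L^2}\Big)\,\d\tau,
\end{align*}
since the initial data vanish.

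The source terms $\mathbf A_\lambda$ and $\mathbf B_\lambda$ are handled directly from Lemma \ref{rotation 1}: on $B^R$ the weight $w$ is bounded, the estimates $|Z^I\mathbf A_\lambda|\lesssim \e\lambda^{K-|I|}(1+s)^{-3}$ and $|Z^I\mathbf B_\lambda|\lesssim \e^2\lambda^{K+1}(1+s)^{-3}$ give an $L^2(B^R\cap\Sigma_\tau)$ bound with an extra $(1+\tau)^{1/2}$ from the $r$-integration, hence overall decay $(1+\tau)^{-3/2}$, which is integrable; after multiplication by $\lambda^{-2K+2|I|}$ and using $|I|\le N'$ together with $\lambda\le 1$, the $\lambda$-powers work out to $\lesssim\e\mathcal E_\lambda(t)^{1/2}$ and $\lesssim\e^2\lambda\,\mathcal E_\lambda(t)^{1/2}$ respectively. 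The delicate part is $\mathcal N(h_\lambda)=Q(\dr h_\lambda,\dr h_\lambda)+Q(\dr h_\lambda,\dr\ffi)+\lambda^+Q(\dr(F\T_\lambda),\dr h_\lambda)+\lambda^+Q(\dr\psi,\dr h_\lambda)$, which I would treat exactly as in Proposition \ref{prop ffi} and Lemma \ref{lem psi k'+1}: use the null structure \eqref{good null form}–\eqref{null form} to put a good derivative $\tdr$ on one factor, split the number of derivatives so one factor is low-order, and use the Klainerman–Sobolev bound \eqref{WKNS 2} to get pointwise decay $(1+s)^{-3/2}$ or $(1+s)^{-1/2-\nu/2}(1+|q|)^{-1+\nu/2}$ on the low-order factor. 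In the first regime one gets an integrable-in-$\tau$ coefficient times $\mathcal E_\lambda$; in the second regime one uses \eqref{w1 w1'} to convert the $(1+|q|)^{-1+\nu/2}$ weight into $(w')^{1/2}$, producing a term absorbed into $\mathcal S_\lambda(t)$ with small coefficient $\e$. The key bookkeeping point is that the rescaling $\lambda^{-2K+2|I|}$ is consistent with the factor structure of $\mathcal N$: the quadratic-in-$h_\lambda$ term contributes $\lambda^{-2K+2|I|}\cdot\lambda^{K-|I_1|}\cdot\lambda^{K-|I_2|}=\lambda^{2|I|-|I_1|-|I_2|}\le 1$, and the $\dr h_\lambda\,\dr\ffi$ and $\dr h_\lambda\,\dr(F\T_\lambda)$ terms likewise gain the right powers (the latter because each $Z$ on $\T_\lambda$ costs only $\lambda^{-1}$, matched by the $\lambda^+$ prefactor), so no negative power of $\lambda$ survives.

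Putting the pieces together, one obtains for $t\in[0,T]$
\begin{equation*}
\mathcal E_\lambda(t)+(1-C\e)\mathcal S_\lambda(t)\le C\e^2+C\sqrt{C_2\e}\,(C_2\e)^2\lesssim C\e^2+CC_2^{5/2}\e^{5/2},
\end{equation*}
where the $C\e^2$ collects the $\mathbf A_\lambda,\mathbf B_\lambda$ contributions and the cross-terms in $\mathcal N$ linear in $h_\lambda$ (each of the form $\e\cdot\mathcal E_\lambda^{1/2}\lesssim\e(C_2\e)$ integrated against an $L^1_\tau$ weight), and the $\mathcal E_\lambda^{3/2}$-type term comes from the genuinely quadratic piece $Q(\dr h_\lambda,\dr h_\lambda)$. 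Choosing $C_2$ large enough that $C\le C_2^2/4$, then $\e$ small enough (depending on $C_2$) that $C\e\le 1/2$ and $C\sqrt{C_2\e}\le 1/4$, gives $\mathcal E_\lambda(t)\le\frac12 C_2^2\e^2$, which is the claim. The main obstacle is purely organizational: keeping track of the interplay between the $\lambda$-rescaling of the energy, the number of derivatives falling on $\T_\lambda$ (each costing a power of $\lambda^{-1}$), and the null-form gain, so that every term ends up with a nonnegative power of $\lambda$ and an integrable-in-time coefficient; once this accounting is set up, each individual estimate is a routine repetition of the arguments already used for $\ffi$ and the $\psi^{(\ell)}$.
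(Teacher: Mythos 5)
Your outline is the same as the paper's: weighted energy estimate applied to $Z^Ih_\lambda$, rescaling by $\lambda^{-2K+2|I|}$, the splitting $\Box h_\lambda=\mathcal N(h_\lambda)+\mathbf A_\lambda+\mathbf B_\lambda$ with Lemma \ref{rotation 1} for the source terms, and the two-regime treatment of $\mathcal N(h_\lambda)$ (pointwise decay $(1+s)^{-3/2}$ on the low-order factor, or $(1+s)^{-1/2-\nu/2}(1+|q|)^{-1+\nu/2}$ converted into $(w')^{1/2}$ via \eqref{w1 w1'} and absorbed into $\mathcal S_\lambda$), closed exactly as in Proposition \ref{prop ffi}.

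There is, however, one concrete slip in the $\lambda$-bookkeeping, and it sits at the only place where the structural hypothesis $K\geq 4$ of Theorem \ref{theo principal} is actually used. For the genuinely quadratic piece $Q(\dr h_\lambda,\dr h_\lambda)$ you write that it contributes $\lambda^{-2K+2|I|}\cdot\lambda^{K-|I_1|}\cdot\lambda^{K-|I_2|}=\lambda^{2|I|-|I_1|-|I_2|}\leq 1$, i.e.\ you count both factors as if they carried the $L^2$ power $\lambda^{K-|I_j|}$ of \eqref{BA h cool}. But to estimate the product in $L^2$ one factor must be taken in $L^\infty$ through the Klainerman--Sobolev bound \eqref{WKNS 2}, which costs three derivatives, and the additional $\dr$ costs one more: the pointwise factor is bounded by $\e\lambda^{K-4-|J_1|}$ (or $\e\lambda^{K-4-|J_2|}$), not $\e\lambda^{K-|J_1|}$. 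The resulting total exponent is $K-4+|I|-|J_1|-|J_2|$, and its nonnegativity is \emph{not} automatic from $|J_1|+|J_2|\leq|I|$; it is exactly here that one must invoke $K\geq 4$ to bound $\lambda^{K-4}$ by $1$ (the same point appears in the regime where the pointwise factor is $\dr Z^{J_1}h_\lambda$ and the other is absorbed into $\mathcal S_\lambda$). Your proposal never mentions $K\geq4$, so the claim that ``no negative power of $\lambda$ survives'' is unjustified as stated; with that hypothesis inserted, the count closes and the rest of your argument (cross terms $\dr h_\lambda\,\dr\ffi$, $\dr h_\lambda\,\dr(F\T_\lambda)$, $\dr h_\lambda\,\dr\psi$, and the $\mathbf A_\lambda,\mathbf B_\lambda$ contributions, which indeed need no such loss) matches the paper's proof.
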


\begin{proof}
Applying Lemma \ref{weighted energy estimate} to $Z^I h_\lambda$ for every multi-index $|I|\leq N'$, multiplying what we obtain by $\lambda^{-2K+2|I|}$ and summing over $I$ we get for every $t\in[0,T]$ (recall that the initial data for $h_\lambda$ and $\dr_t h_\lambda$ vanish):
\begin{align}
\mathcal{E}_\lambda(t)+\mathcal{S}_\lambda(t)&\lesssim \sum_{|I|\leq N'}\int_0^t\lambda^{-2K+2|I|}\l w\Box Z^I h_\lambda \dr Z^I h_{\lambda}\r_{L^1}\d\tau\nonumber
\\&\lesssim  \sum_{|I|\leq N'}\int_0^t\lambda^{-2K+2|I|}\l w^\half\Box Z^I h_\lambda\r_{L^2}\l w^\half \dr Z^I h_{\lambda}\r_{L^2}\d\tau\label{energie pour h}
\end{align}
where we used the Cauchy-Schwarz inequality. Thanks to \eqref{decomposition} and \eqref{wave operator} we get for $|I|\leq N'$:
\begin{align*}
\l w^\half\Box Z^I h_\lambda\r_{L^2} & \lesssim \sum_{|J|\leq |I|}\left( \l w^\half Z^J\mathcal{N}(h_\lambda)  \r_{L^2(B_R\cap \Sigma_\tau)} + \l Z^J\mathbf{A}_\lambda  \r_{L^2(A_R\cap \Sigma_\tau)} +  \l Z^J\mathbf{B}_\lambda  \r_{L^2(B_R\cap \Sigma_\tau)}  \right).
\end{align*}
Thanks to Lemma \ref{rotation 1} we have
\begin{align}
 \l Z^J\mathbf{A}_\lambda  \r_{L^2(A_R\cap \Sigma_\tau)} +  \l Z^J\mathbf{B}_\lambda  \r_{L^2(B_R\cap \Sigma_\tau)} & \lesssim \e\lambda^{K-|J|} \left( \int_0^{\tau+R}\frac{\d r}{(1+r+\tau)^4} \right)^\half \lesssim \frac{ \e\lambda^{K-|J|}}{(1+\tau)^{\frac{3}{2}}}.\label{A et B}
\end{align}
We now look at $\mathcal{N}(h_\lambda)$ using \eqref{N(h)}:
\begin{align*}
\l w^\half Z^J\mathcal{N}(h_\lambda)  \r_{L^2} & \lesssim  \sum_{\substack{|J_1|+|J_2|\leq |J|\\ \Upsilon\in\{ \ffi,\psi,  \lambda^+F\T_\lambda \} \\j=1,2}} \left( N_1^{J_1,J_2} + N^{J_1,J_2}_{\Upsilon,j} \right)
\end{align*}
with
\begin{align*}
N_1^{J_1,J_2} & = \l w^\half\dr Z^{J_1}h_{\lambda}\tdr Z^{J_2}h_{\lambda} \r_{L^2},
\\ N_{\Upsilon,1}^{J_1,J_2} & = \l w^\half\dr Z^{J_1}h_{\lambda}\tdr Z^{J_2}\Upsilon \r_{L^2},
\\N_{\Upsilon,2}^{J_1,J_2} & = \l w^\half\dr Z^{J_1}\Upsilon \tdr Z^{J_2}h_{\lambda} \r_{L^2}.
\end{align*}

We start by $N_1^{J_1,J_2}$ and we distinguish two cases: since $|J_1| + |J_2| \leq N'$ we have either $|J_1|\leq \frac{N'}{2}$ or $|J_2|\leq \frac{N'}{2}$.  
\begin{itemize}
\item If $|J_1|\leq \frac{N'}{2}$, then \eqref{WKNS 2} implies (since $N'\geq 8 \Longrightarrow \frac{N'}{2}+1\leq N'-3$)
\begin{align*}
| \dr Z^{J_1} h_\lambda | \lesssim  \frac{ \e \lambda^{K-4-|J_1|}}{(1+s)^{\half + \frac{\nu}{2} } (1+|q|)^{ 1-\frac{\nu}{2}  }  }
\end{align*}
where $\nu$ is as in \eqref{w1 w1'}. Using \eqref{w1 w1'} we obtain
\begin{align}
N_1^{J_1,J_2} & \lesssim \frac{ \e \lambda^{K-4-|J_1|}}{(1+\tau)^{\half + \frac{\nu}{2} }   }\l  (w')^\half \tdr Z^{J_2} h_\lambda   \r_{L^2}\label{first case}.
\end{align}
\item If $|J_2|\leq \frac{N'}{2}$,  then \eqref{WKNS 2} implies
\begin{align*}
\tdr Z^{J_2}h_{\lambda} \lesssim \frac{\e  \lambda^{K-4-|J_2|}  }{(1+s)^{\frac{3}{2}}}.
\end{align*}
Therefore using \eqref{BA h cool} we obtain
\begin{align}
N_1^{J_1,J_2} & \lesssim \frac{\e  \lambda^{K-4-|J_2|}  }{(1+\tau)^{\frac{3}{2}}} \l w^\half\dr Z^{J_1}h_{\lambda} \r_{L^2} \lesssim \frac{\e^2  \lambda^{2K-4 - |J_1| -|J_2|}  }{(1+\tau)^{\frac{3}{2}}}\label{second case}.
\end{align}
\end{itemize}
We now turn to $N_{\Upsilon,1}^{J_1,J_2} $ for $\Upsilon \in \{ \ffi,\psi,  \lambda^+F\T_\lambda \}$. Using \eqref{decay ffi l}, \eqref{decay ffi} and \eqref{A} we obtain
\begin{align*}
|\tdr Z^{J_2} \Upsilon | \lesssim \frac{\e \lambda^{-|J_2|}}{(1+s)^{\frac{3}{2}}} 
\end{align*}
where we used the presence of $\lambda^+$ to regain one power of $\lambda$. Using then \eqref{BA h cool} this implies
\begin{align}
N_{\Upsilon,1}^{J_1,J_2} \lesssim \frac{\e \lambda^{-|J_2|}}{(1+\tau)^{\frac{3}{2}}} \l w^\half\dr Z^{J_1}h_{\lambda} \r_{L^2} \lesssim  \frac{\e^2 \lambda^{K-|J_1|-|J_2|}}{(1+\tau)^{\frac{3}{2}}}. \label{N i 1}
\end{align}
Finally, let us look at $N_{\Upsilon,2}^{J_1,J_2} $ for $\Upsilon \in \{ \ffi,\psi,  \lambda^+F\T_\lambda \}$. Using again \eqref{decay ffi l}, \eqref{decay ffi} and \eqref{A} we obtain
\begin{align*}
|\dr Z^{J_1} \Upsilon | \lesssim \frac{\e \lambda^{-|J_1|}}{(1+s)^{\half + \frac{\nu}{2} } (1+|q|)^{ 1-\frac{\nu}{2}  }  }
\end{align*}
where $\nu$ is as in \eqref{w1 w1'}. Therefore using \eqref{w1 w1'} we obtain
\begin{align}
N_{\Upsilon,2}^{J_1,J_2} \lesssim \frac{\e \lambda^{-|J_1|}}{(1+\tau)^{\half + \frac{\nu}{2} }  } \l  (w')^\half \tdr Z^{J_2} h_\lambda   \r_{L^2}.\label{N i 2}
\end{align}

\par\leavevmode\par
We now put \eqref{A et B}-\eqref{first case}-\eqref{second case}-\eqref{N i 1}-\eqref{N i 2} together into \eqref{energie pour h} to obtain
\begin{align*}
\mathcal{E}_\lambda(t)+\mathcal{S}_\lambda(t)&\lesssim \sum_{|J|\leq |I|\leq N'} \int_0^t  \frac{\e\lambda^{-K + 2|I| - |J|}}{(1+\tau)^{\frac{3}{2}}}\l w^\half \dr Z^I h_{\lambda}\r_{L^2}\d\tau 
\\&\quad + \sum_{|J_1|+|J_2| \leq |I| \leq N'} \int_0^t  \frac{\e^2\lambda^{-K + 2|I| - |J_1| - |J_2|}}{ (1+\tau)^{\frac{3}{2}} } \l w^\half \dr Z^I h_{\lambda}\r_{L^2}\d\tau
\\&\quad +  \sum_{|J_1|+|J_2|\leq |I|\leq N'}\int_0^t\frac{\e\lambda^{-2K+2|I|-|J_1|}}{(1+\tau)^{\half + \frac{\nu}{2} }  } \l  (w')^\half \tdr Z^{J_2} h_\lambda   \r_{L^2}\l w^\half \dr Z^I h_{\lambda}\r_{L^2}\d\tau
\end{align*}
where the first line corresponds to \eqref{A et B}, the second to \eqref{second case} and \eqref{N i 1} and the third to \eqref{first case} and \eqref{N i 2}. Note that we used $K\geq 4$ to bound $\lambda^{K-4}$ by 1 in \eqref{first case} and \eqref{second case}.  For the first two integrals, we simply use \eqref{BA h cool}. For the third integral we use $2ab\leq a^2+b^2$ and split the $\lambda$ powers carefully:
\begin{align*}
\int_0^t&\frac{\e\lambda^{-2K+2|I|-|J_1|}}{(1+\tau)^{\half + \frac{\nu}{2} }  } \l  (w')^\half \tdr Z^{J_2} h_\lambda   \r_{L^2}  \l w^\half \dr Z^I h_{\lambda}\r_{L^2} 
\\&\lesssim\int_0^t \e\lambda^{-2K + 2|J_2| }  \l  (w')^\half \tdr Z^{J_2} h_\lambda   \r_{L^2}^2 + \int_0^t \frac{\e\lambda^{  -2K + 4|I| - 2|J_1|-2|J_2|  }  }{(1+\tau)^{1+\nu } }  \l w^\half \dr Z^I h_{\lambda}\r_{L^2}^2
\\&\lesssim  \e \mathcal{S}_\lambda(t)  +  \e^3 \lambda^{2(|I| - |J_1|-|J_2|)}
\end{align*}
Since $|J_1|+|J_2|\leq |I|$ and $|J|\leq |I|$ we have proved that there exists a constant $C>0$ such that
\begin{align*}
\mathcal{E}_\lambda(t)+(1-C\e)\mathcal{S}_\lambda(t)\leq  CC_2\e^2+C\e^3
\end{align*}
We now choose $C_2\geq 4C$ and $\e\leq \frac{C_2^2}{4C}$ so that for every $t\in[0,T]$, $\mathcal{E}_\lambda(t)\leq \frac{C_2^2}{2}\e^2$.

\end{proof}

The result of Proposition \ref{last proposition} contradicts the maximality of $T$ and thus $T=T_0$. But now $\mathcal{E}_\lambda(t)$ is bounded up to $T_0$ by $C_2^2\e^2$, which proves that $T_0=+\infty$. This concludes the proof of Theorem \ref{theo principal}.

\appendix

\section{Commutators}\label{appendice B}

In this first appendix, we estimate $[Z,\tdr]$ and $[Z,\Ll]$. The later will be used in the second appendix, where we solve equation \eqref{Ll1}.

\begin{lem}\label{commutator tangent}
For every $Z\in\mathcal{Z}$, we have
\begin{equation*}
    [Z,\tdr_i]=p^{\ell}_i\tdr_{\ell}+\sum_{Z'\in\mathcal{Z}}\frac{p_{i,Z'}}{r}Z',
\end{equation*}
for some functions $p^{\ell}_i$ and $p_{i,Z'}$ such that for every $Z'\in\mathcal{Z}$, multi-index $K$ and $\ell\in\{1,2,3\}$ there exists $C^{K,\ell}_Z$ such that
\begin{equation*}
    |Z^Kp^{\ell}_i|+|Z^Kp_{i,Z'}|\leq C^{K,I,\ell}_J
\end{equation*}
in the region $A^R$.
\end{lem}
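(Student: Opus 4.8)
Our plan is to compute the commutator $[Z,\tdr_i]$ explicitly for each generator $Z$ of $\mathcal{Z}$ — that is, for $\dr_t$, $\dr_j$, $S$, the rotations $\Omega_{jk}$ and the boosts $\Omega_{0j}$ — to observe that in every case the result has the announced shape (after re-expressing a couple of "bad" operators), and then to read off the coefficients $p^\ell_i,p_{i,Z'}$; the uniform bounds follow from an elementary closure property of a class of functions adapted to $A^R$. The easy cases come first. Since $\omega_i$ and $\dr_r=\omega_k\dr_k$ involve neither $\dr_t$ nor $t$, we get $[\dr_t,\tdr_i]=0$ at once. A direct computation of $[\dr_j,\tdr_i]=-[\dr_j,\omega_i\dr_r]$ using $\dr_j\omega_i=\tfrac1r(\delta_{ij}-\omega_i\omega_j)$ gives
\[
[\dr_j,\tdr_i]=\tfrac1r\bigl(-\delta_{ij}\omega_k+2\omega_i\omega_j\omega_k-\omega_i\delta_{jk}\bigr)\dr_k,
\]
already of the form $\sum_{Z'}\tfrac{p_{i,Z'}}{r}Z'$ with $p_{i,Z'}$ a polynomial in the $\omega$'s (and no $\tdr_\ell$ term). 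For the scaling field, Euler's relation (a function homogeneous of degree $0$ in $x$ is killed by $r\dr_r$) yields $[S,\tdr_i]=-\tdr_i$, and a short computation gives $[\Omega_{jk},\tdr_i]=\delta_{ik}\tdr_j-\delta_{ij}\tdr_k$; both are of the form $p^\ell_i\tdr_\ell$ with \emph{constant} coefficients.

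The delicate case is the boost. Writing $\Omega_{0j}=-t\dr_j-x_j\dr_t$, the same kind of manipulation gives, after collecting terms,
\[
[\Omega_{0j},\tdr_i]=\bigl(\delta_{ij}-\omega_i\omega_j\bigr)\Bigl(\dr_t+\tfrac tr\dr_r\Bigr)+\tfrac tr\,\omega_i\,\tdr_j .
\]
The factor $\dr_t+\tfrac tr\dr_r$ is not yet of the allowed form, since $\dr_t$ is a bad derivative carrying an $O(1)$ coefficient. Here one uses the identities $\dr_t+\tfrac tr\dr_r=2\tdr_0-\tfrac qr\dr_r$ (recall $q=r-t$, so $r-q=t$) and $2\tdr_0=\tfrac1r(S+q\,\dr_t)$ (because $S+q\dr_t=r(\dr_t+\dr_r)$), together with $\dr_r=\omega_k\dr_k$, to rewrite
\[
[\Omega_{0j},\tdr_i]=\tfrac tr\,\omega_i\,\tdr_j+\tfrac1r\bigl(\delta_{ij}-\omega_i\omega_j\bigr)\bigl(S+q\,\dr_t-q\,\omega_k\dr_k\bigr),
\]
which is exactly the form claimed in the lemma, with $p^j_i=\tfrac tr\omega_i$ and the $p_{i,Z'}$ polynomials in $\omega$, $q$ and $\tfrac tr$.

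It remains to bound the coefficients and their iterated $\mathcal{Z}$-derivatives on $A^R$. Let $\mathcal{C}$ denote the class of smooth functions on $A^R$ that are bounded there, together with all their iterated images under vector fields of $\mathcal{Z}$. This class is visibly closed under sums and products, so it suffices to check that $\omega_i$, $1/r$, $q$ and $t/r=1-q/r$ belong to $\mathcal{C}$: boundedness of $q$ on $A^R$ is the definition of $A^R$; the bound $|Z^Kr^{-1}|\lesssim r^{-1}$ is the one already used in the paper; $\omega_i\in\mathcal{C}$ follows from homogeneity ($r\dr_r\omega_i=0$, $\dr_t\omega_i=0$, $\dr_j\omega_i=O(1/r)$, and the rotations and boosts send the $\omega$'s and $t/r$ to polynomials in $\omega$ and $t/r$), and $q\in\mathcal{C}$ follows by induction from $\dr_tq=-1$, $\dr_jq=\omega_j$, $\Omega_{jk}q=0$, $Sq=q$, $\Omega_{0j}q=q\,\omega_j$. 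Since every coefficient $p^\ell_i,p_{i,Z'}$ produced above is a polynomial in elements of $\mathcal{C}$, each lies in $\mathcal{C}$, which is exactly the asserted estimate $|Z^Kp^\ell_i|+|Z^Kp_{i,Z'}|\leq C$ on $A^R$.

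\textbf{Main obstacle.} The only non-routine step is the boost commutator: naively it produces the bad derivative $\dr_t$ with an $O(1)$ coefficient, which does not fit the prescribed form $\tfrac1r\times(\text{bounded})\times Z'$. One must notice the cancellation turning $\dr_t+\tfrac tr\dr_r$ into $2\tdr_0-\tfrac qr\dr_r$ and then $2\tdr_0$ into $\tfrac1r(S+q\dr_t)$, which trades the bad derivative for the good derivative $\tdr_0$ plus a genuine $\tfrac1r$-gain, at the cost of the weight $q=r-t$ — bounded precisely because we work in $A^R$. Everything else is bookkeeping of $\omega$'s and counting of derivatives.
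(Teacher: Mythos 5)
Your proposal is correct and takes essentially the same route as the paper: a direct case-by-case computation of the commutator with each Minkowski generator ($\dr_t$, $\dr_j$, $S$, $\Omega_{jk}$, $\Omega_{0j}$), the boost being the only delicate case. The sole divergence is in how the leftover $\dr_t+\frac{t}{r}\dr_r$ is recast — you write it as $\frac{1}{r}\left(S+q\,\dr_t-q\,\omega_k\dr_k\right)$, using boundedness of $q$ on $A^R$, while the paper absorbs it as $\frac{\omega_k}{r}(\delta_{ij}-\omega_i\omega_j)\Omega_{0k}$ via $\omega_k\Omega_{0k}=-r\dr_t-t\dr_r$ — and your closure-class argument for the coefficient bounds simply makes explicit what the paper leaves implicit.
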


\begin{proof}
It follows from direct computations, using the formula $\tdr_i=\dr_i-\frac{x_i}{r}\dr_r$:
\begin{align*}
    [\tdr_i,\dr_t]&=0, \\
    [\tdr_i,\dr_j]&= \frac{\omega_i}{r}\tdr_j+\frac{1}{r}(\delta_{ij}-\omega_i\omega_j)\dr_r,\\
    [\tdr_i,S]&=\tdr_i,\\
    [\tdr_i,\Omega_{jk}]&=\delta_{ik}\tdr_j-\delta_{ij}\tdr_k,\\
    [\tdr_i,\Omega_{0j}]&=\frac{t\omega_i}{r}\tdr_j+\frac{\omega_k}{r}(\delta_{ij}-\omega_i\omega_j)\Omega_{0k}.
\end{align*}
\end{proof}

The following proposition estimates the commutator between the Minkowski vector fields and the transport operator $\Ll$.

\begin{prop}\label{commutator}
For every multi-index $I$ we have 
\begin{equation*}
    [\Ll,Z^I]=\sum_{|J|\leq|I|-1}\left( \frac{a^{I,\ell}_J}{r}\tdr_{\ell}Z^J+b^I_JZ^J\Ll+\frac{c^I_J}{r^2}Z^J \right),
\end{equation*}
for some functions $a^{I,\ell}_J$, $b^I_J$ and $c^I_J$ such that for every multi-index $I$, $J$ and $K$ with $|J|+1\leq |I|$ and every $\ell\in\{1,2,3\}$ there exists $C^{K,I,\ell}_J>0$ such that 
\begin{equation*}
    |Z^Ka^{I,\ell}_J|+|Z^Kb^I_J|+|Z^Kc^I_J|\leq C^{K,I,\ell}_J
\end{equation*}
in the region $A^R$.
\end{prop}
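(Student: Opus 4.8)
The plan is to prove the formula by induction on $|I|$, the driving identity being the Leibniz rule $[\Ll, ZZ^I] = [\Ll,Z]Z^I + Z[\Ll, Z^I]$, combined with the tangential commutators $[Z,\tdr_i]$ already computed in Lemma \ref{commutator tangent}. Throughout it is convenient to work with the algebra $\mathcal{B}$ of smooth functions $a$ on $A^R$ with $\sup_{A^R}|Z^Ka|<\infty$ for every multi-index $K$; the uniform bounds claimed in the statement amount exactly to asserting that all the coefficients $a^{I,\ell}_J, b^I_J, c^I_J$ produced below lie in $\mathcal{B}$. The only facts needed about $\mathcal{B}$ are: it is an algebra stable under the action of every $Z\in\mathcal{Z}$; the functions $\omega_i=x_i/r$, $q=r-t$ and $t/r$ belong to $\mathcal{B}$ (here one uses $A^R\subset\{r\geq R^{-1}\}$ together with the standard bounds $|\partial^{|\alpha|}\omega_i|\lesssim 1$, $|Z^Kr^{\pm1}|\lesssim r^{\pm1}$, $|Z^Kt|\lesssim r$ on that region); the functions $p^\ell_i,p_{i,Z'}$ of Lemma \ref{commutator tangent} lie in $\mathcal{B}$; and, as a consequence of the first two, for $a\in\mathcal{B}$ and $m\in\{1,2\}$ one has $Z(r^{-m}a)=r^{-m}\tilde a$ with $\tilde a\in\mathcal{B}$ (write $Z(r^{-m}a)=r^{-m}(Za)+a\cdot r^{-m}\bigl(r^mZ(r^{-m})\bigr)$ and note $r^mZ(r^{-m})\in\mathcal{B}$).

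\textbf{Base case $|I|=1$.} Here the asserted sum is over $|J|\leq 0$, i.e.\ $Z^J=\mathrm{Id}$, so one must check that $[\Ll,Z]$ has the form $\tfrac{a^\ell}{r}\tdr_\ell+b\Ll+\tfrac{c}{r^2}$ with $a^\ell,b,c\in\mathcal{B}$. A direct computation using $\dr_r=\omega^i\dr_i$, $\dr_rx_j=\omega_j$ and $[\dr_j,\dr_r]=\tfrac1r\tdr_j$ gives
\begin{align*}
[\Ll, \dr_t] = 0, \qquad [\Ll, \dr_j] = -\tfrac{1}{r}\tdr_j + \tfrac{\omega_j}{r^2}, \qquad [\Ll, S] = \Ll, \\
[\Ll, \Omega_{jk}] = 0, \qquad [\Ll, \Omega_{0j}] = -\dr_j - \omega_j\dr_t + \tfrac{t}{r}\tdr_j - \tfrac{t\omega_j}{r^2}.
\end{align*}
All but the last are already in the required shape. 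For the last, write $-\dr_j=-\tdr_j-\omega_j\dr_r$ and $\dr_t+\dr_r=\Ll-\tfrac1r$, so that $-\dr_j-\omega_j\dr_t=-\tdr_j-\omega_j\Ll+\tfrac{\omega_j}{r}$; the $\tdr_j$ terms then combine into $\bigl(-1+\tfrac tr\bigr)\tdr_j=-\tfrac qr\tdr_j$ and the scalar terms into $\tfrac{\omega_j}{r}-\tfrac{t\omega_j}{r^2}=\tfrac{q\omega_j}{r^2}$, yielding $[\Ll,\Omega_{0j}]=-\tfrac qr\tdr_j-\omega_j\Ll+\tfrac{q\omega_j}{r^2}$, which is of the claimed form since $q,\omega_j,q\omega_j\in\mathcal{B}$. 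This rearrangement — trading the a priori merely $O(r^{-1})$ pieces $\tfrac tr\tdr_j$ and $\tfrac{t\omega_j}{r^2}$ against genuine first-order terms to expose the factor $q$ — is the single place where confinement to $A^R$ (where $q$ is bounded) is used, and it is the main point of the proof; everything else is bookkeeping.

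\textbf{Induction.} Assume the formula for $|I|=n$ and write $Z^{I'}=ZZ^I$ with $|I|=n$, $|I'|=n+1$. In $[\Ll,ZZ^I]=[\Ll,Z]Z^I+Z[\Ll,Z^I]$, the base case gives $[\Ll,Z]Z^I=\tfrac{a^\ell}{r}\tdr_\ell Z^I+b\Ll Z^I+\tfrac{c}{r^2}Z^I$: the first and third terms are already of the required form with $J=I$ (and $|I|=n=|I'|-1$), while $b\Ll Z^I=bZ^I\Ll+b[\Ll,Z^I]$ is handled by the term $J=I$ and by multiplying the inductive expression for $[\Ll,Z^I]$ (which involves only $|J|\leq n-1$) by $b\in\mathcal{B}$. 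For the second summand one distributes $Z$ over the inductive expression for $[\Ll,Z^I]$: in $Z\bigl(\tfrac{a_J}{r}\tdr_\ell Z^J\bigr)$ one meets $Z\tdr_\ell Z^J$, which by Lemma \ref{commutator tangent} equals $\tdr_\ell(ZZ^J)+p^m_\ell\tdr_mZ^J+\sum_{Z'}\tfrac{p_{\ell,Z'}}{r}Z'Z^J$, producing terms $\tfrac{a_J}{r}\tdr_\ell(ZZ^J)$, $\tfrac{a_Jp^m_\ell}{r}\tdr_mZ^J$, $\tfrac{a_Jp_{\ell,Z'}}{r^2}Z'Z^J$ together with $Z\bigl(\tfrac{a_J}{r}\bigr)\tdr_\ell Z^J=\tfrac{\tilde a_J}{r}\tdr_\ell Z^J$; likewise $Z(b_JZ^J\Ll)=(Zb_J)Z^J\Ll+b_J(ZZ^J)\Ll$ and $Z\bigl(\tfrac{c_J}{r^2}Z^J\bigr)=\tfrac{\tilde c_J}{r^2}Z^J+\tfrac{c_J}{r^2}(ZZ^J)$. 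Every term obtained is of the asserted form, the orders of the $Z^{(\cdot)}$ occurring being at most $|J|+1\leq n=|I'|-1$, and every coefficient is a sum of products of elements of $\mathcal{B}$ and of the $p^\ell_i,p_{i,Z'}$, hence lies in $\mathcal{B}$; this gives the formula for $|I'|=n+1$ together with the uniform coefficient bounds. The induction is entirely routine once the base case is in hand, so the computation of $[\Ll,\Omega_{0j}]$ and its rearrangement is the only real obstacle.
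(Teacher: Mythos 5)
Your proposal is correct and follows essentially the same route as the paper: a direct computation of the first-order commutators $[\Ll,Z]$ (your rearranged form $-\tfrac qr\tdr_j-\omega_j\Ll+\tfrac{q\omega_j}{r^2}$ agrees with the paper's $\tfrac{t-r}{r}\tdr_i-\omega_i\Ll+\tfrac{\omega_i(r-t)}{r^2}$), followed by induction on $|I|$ using the Leibniz identity, the rewriting $b\Ll Z^I=bZ^I\Ll+b[\Ll,Z^I]$, and Lemma \ref{commutator tangent} to absorb the terms $Z\tdr_\ell Z^J$. The bookkeeping of the coefficient bounds via your algebra $\mathcal{B}$ matches the paper's use of $|Z^K(r^{-1})|\lesssim r^{-1}$ and the bounds on $p^\ell_i,p_{i,Z'}$ in $A^R$.
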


\begin{proof}
We prove this by strong induction on the value of $|I|$. Some direct computations show that
\begin{align*}
    [\Ll,\dr_t]=[\Ll,\Omega_{ij}]&=0,\\
    [\Ll,\dr_i]&=-\frac{1}{r}\tdr_i+\frac{\omega_i}{r^2}\mathrm{Id},\\
    [\Ll,\Omega_{0i}]&=\frac{t-r}{r}\tdr_i-\omega_i\Ll+\frac{\omega_i(r-t)}{r^2}\mathrm{Id},\\
    [\Ll,S]&=\Ll,
\end{align*}
which proves the proposition if $|I|=1$. We use the notation $[\Ll,Z]=\frac{a^{Z,\ell}_0}{r}\tdr_{\ell}+b^Z_0\Ll+\frac{c^Z_0}{r^2}\mathrm{Id}$. Now let $|I|$ be a multi-index with $|I|\leq N-1$ and $Z$ be any minkowskian vector field, we have 
\begin{align*}
    [\Ll,ZZ^I]&= Z[\Ll,Z^I]+[\Ll,Z]Z^I \\
    &= \sum_{|J|\leq|I|-1}\left( Z\left(\frac{a^{I,\ell}_J}{r}\right)\tdr_{\ell}Z^J+Z\left(b^I_J\right)Z^J\Ll+Z\left(\frac{c^I_J}{r^2}\right)Z^J \right)+\sum_{|J|\leq|I|}\left( b^I_JZ^J\Ll+\frac{c^I_J}{r^2}Z^J \right)\\& \qquad+\sum_{|J|\leq|I|-1}\frac{a^{I,\ell}_J}{r}Z\tdr_{\ell}Z^J+\frac{a^{Z,\ell}_0}{r}\tdr_{\ell}Z^I+b^Z_0\Ll Z^I+\frac{c^Z_0}{r^2}Z^I.
\end{align*}
Using the assumptions made on the functions $a^{I,\ell}_J$, $b^I_J$ and $c^I_J$, the fact that $|Z^K(r^{-1})|\lesssim r^{-1}$ for every multi-index $K$, we see that the first sum contains only good terms, meaning that they have the form we want. It is also the case for the terms in the second sum and for the terms $\frac{a^{Z,\ell}_0}{r}\tdr_{\ell}Z^I$ and $\frac{c^Z_0}{r^2}Z^I$. To conclude the proof, it remains to deal with the third sum and $b^Z_0\Ll Z^I$:
\begin{itemize}
    \item for $b^Z_0\Ll Z^I$, we simply write $b^Z_0\Ll Z^I=b^Z_0Z^I\Ll+b^Z_0[\Ll,Z^I]$. Because of the assumptions made on $b^Z_0$, it proves that the term $b^Z_0\Ll Z^I$ has the desired form.
    \item for the third sum, we use Lemma \ref{commutator tangent}:
    \begin{align*}
        \sum_{|J|\leq|I|-1}\frac{a^{I,\ell}_J}{r}Z\tdr_{\ell}Z^J&= \sum_{|J|\leq|I|}\frac{a^{I,\ell}_J}{r}\tdr_{\ell}Z^J+ \sum_{|J|\leq|I|-1}\frac{a^{I,\ell}_J}{r}[Z,\tdr_{\ell}]Z^J \\
        &=\sum_{|J|\leq|I|}\frac{a^{I,\ell}_J}{r}\tdr_{\ell}Z^J+ \sum_{|J|\leq|I|-1}\frac{a^{I,\ell}_Jp_{\ell}^k}{r}\tdr_{k}Z^J
        +\sum_{|J|\leq|I|-1}\frac{a^{I,\ell}_Jp_{\ell,Z'}}{r^2}Z'Z^J,
    \end{align*}
    which is of the desired form.
\end{itemize}
This concludes the proof of the proposition.
\end{proof}

The following Lemma estimates $\Box f$ if we know $\Ll f$.

\begin{lem}\label{d'alembertien}
If $f$ and $g$ are such that $\Ll f=g$, then
\begin{equation*}
    \Box f=\delta^{ij}\tdr_i\tdr_jf-\underline{L}g-\frac{g}{r}.
\end{equation*}
\end{lem}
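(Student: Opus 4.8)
The plan is to verify the identity by a direct computation, expressing the d'Alembertian in terms of the null frame derivatives and isolating the part that can be written using the transport operator $\Ll = \dr_t + \dr_r + \frac1r$. First I would recall the basic identity for the wave operator acting on a scalar function in terms of $\dr_t$, $\dr_r$ and the angular derivatives: writing $\Box = -\dr_t^2 + \Delta$ and using $\Delta = \dr_r^2 + \frac{2}{r}\dr_r + \frac{1}{r^2}\Delta_{\mathbb{S}^2}$, one gets
\begin{equation*}
\Box f = -\dr_t^2 f + \dr_r^2 f + \frac{2}{r}\dr_r f + \frac{1}{r^2}\Delta_{\mathbb{S}^2} f = (\dr_r - \dr_t)(\dr_r+\dr_t) f + \frac{2}{r}\dr_r f + \frac{1}{r^2}\Delta_{\mathbb{S}^2} f.
\end{equation*}
The key point is to recognize $\frac{1}{r^2}\Delta_{\mathbb{S}^2} f$ as $\delta^{ij}\tdr_i\tdr_j f$, which follows from the standard fact that the operator $\delta^{ij}\tdr_i\tdr_j$, with $\tdr_i = \dr_i - \omega_i\dr_r$, is exactly the Laplacian on the spheres $\{t=\mathrm{const},\, r=\mathrm{const}\}$ (up to lower-order terms that vanish here because the $\tdr_i$ are tangent to these spheres); this is a routine verification using $\sum_i \omega_i \tdr_i = 0$ and $\sum_i \tdr_i \omega_i = \frac{2}{r}$ which I would state but not belabor.

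Next I would rewrite the remaining part $(\dr_r-\dr_t)(\dr_r+\dr_t)f + \frac{2}{r}\dr_r f$ so that the inner factor matches $\Ll$. Writing $\underline{L} = \dr_t - \dr_r$ and noting $\dr_r + \dr_t = \Ll - \frac1r$ and $\dr_r = \tdr_0 \cdot 2 - \dr_t$... actually more directly: $(\dr_r - \dr_t)(\dr_r + \dr_t) = -\underline{L}(\dr_t + \dr_r)$, so
\begin{equation*}
(\dr_r-\dr_t)(\dr_r+\dr_t)f + \frac{2}{r}\dr_r f = -\underline{L}\big((\dr_t+\dr_r)f\big) + \frac{2}{r}\dr_r f = -\underline{L}\Big(\Ll f - \frac1r f\Big) + \frac{2}{r}\dr_r f.
\end{equation*}
Then $-\underline{L}\big(\Ll f - \frac1r f\big) = -\underline{L}(\Ll f) + \underline{L}\big(\frac1r f\big) = -\underline{L} g + \underline{L}\big(\frac1r f\big)$, using the hypothesis $\Ll f = g$. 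It remains to check that $\underline{L}\big(\frac1r f\big) + \frac{2}{r}\dr_r f = -\frac{g}{r}$. Since $\underline{L}\big(\frac1r f\big) = \frac1r \underline{L} f + f\,\underline{L}\big(\frac1r\big) = \frac1r(\dr_t - \dr_r)f + \frac{f}{r^2}$ (because $\underline{L}(\frac1r) = -\dr_r(\frac1r) = \frac{1}{r^2}$, and $\dr_t(\frac1r)=0$), adding $\frac{2}{r}\dr_r f$ gives $\frac1r(\dr_t + \dr_r)f + \frac{f}{r^2} = \frac1r\big((\dr_t+\dr_r)f + \frac1r f\big) = \frac1r \Ll f = \frac{g}{r}$. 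This has the wrong sign compared to the claimed $-\frac{g}{r}$, so I would re-examine the sign bookkeeping in the step $-\underline{L}(\Ll f - \frac1r f)$; the correct grouping is $-\underline{L}(\Ll f) - \underline{L}(-\frac1r f) = -\underline{L} g + \underline{L}(\frac1r f)$ is right, but then one must be careful that the leftover $\frac{2}{r}\dr_r f$ combines with $\underline{L}(\frac1r f)$ to give precisely $\frac{g}{r}$, meaning the net of the non-$\tdr^2$, non-$\underline{L} g$ terms is $+\frac{g}{r}$; comparing with the statement, the sign of the $\dr_r^2$ versus $-\dr_t^2$ split must be tracked once more. I expect this sign reconciliation to be the only delicate point, and it is purely a matter of carefully ordering the factors in $\Box = \underline{L}(\dr_t+\dr_r) + \frac{2}{r}\dr_r + \delta^{ij}\tdr_i\tdr_j$ versus $\Box = -(\dr_t+\dr_r)\underline{L} + \cdots$; once the correct ordering is fixed the identity falls out immediately.

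So the overall structure is: (i) decompose $\Box$ using standard spherical coordinates; (ii) identify the angular part with $\delta^{ij}\tdr_i\tdr_j f$; (iii) rewrite the radial–temporal part in terms of $\Ll$ and $\underline{L}$ and substitute $\Ll f = g$; (iv) collect the $\frac1r$-weighted terms. The main obstacle is purely the careful sign and ordering bookkeeping in step (iii)–(iv), since $\dr_t$ and $\dr_r$ do not commute with $\frac1r$ and the operators $\Ll$, $\underline{L}$ must be applied in the right order; no genuine analytic difficulty is present, as $f$ is assumed smooth enough for all these manipulations to be legitimate.
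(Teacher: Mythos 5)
Your proposal is essentially the paper's own computation, just organized differently: the paper applies $-\dr_t+\dr_r$ to the relation $\Ll f=g$ and then uses $\Delta=\dr_r^2+\frac{2}{r}\dr_r+\delta^{ij}\tdr_i\tdr_j$, whereas you factor $-\dr_t^2+\dr_r^2=-\underline{L}(\dr_t+\dr_r)$ and substitute $(\dr_t+\dr_r)f=g-\frac{f}{r}$; these are the same manipulations, and your identification of $\delta^{ij}\tdr_i\tdr_j$ with the angular part of $\Delta$ is exactly what the paper invokes. The one place where you hedged is precisely where you should not have: your sign is correct, and the identity really is $\Box f=\delta^{ij}\tdr_i\tdr_jf-\underline{L}g+\frac{g}{r}$ with $\underline{L}=\dr_t-\dr_r$. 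No reordering of the factors can produce $-\frac{g}{r}$: a one-line sanity check with $f=r$ gives $g=\Ll f=2$, $\Box f=\frac{2}{r}$, $\delta^{ij}\tdr_i\tdr_jf=0$ and $\underline{L}g=0$, so only the $+\frac{g}{r}$ version can hold. The printed statement carries a sign typo, and the paper's own proof makes the corresponding slip when passing to its second display (the bracket $\dr_tf-\dr_rf+\frac{f}{r}$ equals $g-2\dr_rf$, so after moving $-\frac{g}{r}$ across the equality the right-hand side should read $-\underline{L}g+\frac{g}{r}$). The discrepancy is immaterial for the rest of the paper, since the lemma is only used through absolute-value bounds as in the proof of \eqref{estimee sur box F k+1} (where it is in fact quoted with yet another choice of signs), but in your own write-up you should simply state and prove the identity with $+\frac{g}{r}$ rather than leaving the reconciliation open, since trusting the stated sign over your correct calculation would have led you astray.
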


\begin{proof}
Applying separately $-\dr_t + \dr_r$ to $\Ll f=g$ we obtain
\begin{equation*}
    -\dr_t^2f+\dr_r^2f-\frac{1}{r}\left( \dr_tf-\dr_rf+\frac{f}{r}\right)=-\underline{L}g.
\end{equation*}
We recognize on the LHS of the previous equation the operator $\Ll$ applied to $f$ so that :
\begin{equation*}
    -\dr_t^2f+\dr_r^2f+\frac{2}{r}\dr_rf=-\underline{L}g-\frac{g}{r}.
\end{equation*}
Recalling that $\Delta=\dr_r^2+\frac{2}{r}\dr_r+\delta^{ij}\tdr_i\tdr_j$ concludes the proof.
\end{proof}

\section{The transport equations}\label{appendice A}

The goal of this appendix is to prove Proposition \ref{equation de transport prop}.

\subsection{The reduced transport equation}

We start by considering the following reduced transport equation 
\begin{equation}\label{transport reduit 2}
(L+\eta)f=g
\end{equation}
where
\begin{itemize}
\item $g:\R_+\times\R^3\longmapsto\R$ is a continuous function supported in $A^R$,
\item $\eta:\R_+\times\R^3\longmapsto\R$ is a continuous function such that $|\eta|\lesssim\frac{\e}{(1+s)^2}$.
\end{itemize}
We start by constructing a well-chosen solution to $(L-\eta)\phi=0$ which is close to 1.

\begin{mydef}
Let $\chi:\R^3\to[0,1]$ be a smooth function supported in $\{\frac{1}{2R}\leq r \leq 2R\}$ and such that $\chi_{|A_0}\equiv 1$. Using polar coordinates we define $\hat{\chi}$ a cut-off function around $A^R$ in $\R_+\times\R^3$ by
\begin{equation*}
    \hat{\chi}(t,r,\omega)=\chi(r-t,\omega).
\end{equation*}
\end{mydef}

\begin{lem}
There exists a function $\beta$ vanishing outside of $\{\frac{1}{2R}\leq q \leq 2R\}$ which satisfies
\begin{equation*}
|\beta|\lesssim\e\qquad\text{and}\qquad L(1+\beta)=(1+\beta)\eta\quad\text{in $A^R$}.
\end{equation*}
\end{lem}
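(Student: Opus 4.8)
The idea is to construct $\beta$ by solving the linear transport equation $L(1+\beta)=(1+\beta)\eta$ along the integral curves of $L=\partial_t+\partial_r$, which are the lines $\{q=r-t=\text{const}\}$ (at fixed angular variable $\omega$). Writing $\phi=1+\beta$, we want $L\phi=\phi\eta$, i.e. $\frac{d}{dt}\phi(t,t+q,\omega)=\eta(t,t+q,\omega)\,\phi(t,t+q,\omega)$ along each ray. Since we want $\beta$ supported in $\{\frac{1}{2R}\le q\le 2R\}$, it is natural to prescribe the "data" for $\phi$ at $t\to+\infty$ to be $1$ (so that $\beta\to0$ there), or equivalently to integrate the ODE from $t=+\infty$ backwards. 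Concretely, along the ray parametrized by $t$ with $q$ and $\omega$ fixed, set
\begin{equation*}
\phi(t,t+q,\omega)=\exp\left(-\int_t^{+\infty}\eta(\tau,\tau+q,\omega)\,\d\tau\right),
\end{equation*}
which is well-defined because $|\eta|\lesssim \e(1+s)^{-2}\lesssim \e(1+\tau)^{-2}$ is integrable in $\tau$ on $[t,+\infty)$. Then $\beta\vcentcolon=\phi-1$ and by construction $L(1+\beta)=(1+\beta)\eta$ everywhere, in particular in $A^R$.

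Next I would check the two claimed properties. For the support statement: if $q\notin[\frac{1}{2R},2R]$, then along that whole ray $\eta$ vanishes — here I use that $\eta$ is supported in $A^R\subset\{\frac{1}{2R}\le q\le 2R\}$ (more precisely $\eta$ inherits its support from $g$ and the construction, and in any case the lemma is applied with $\eta$ supported near $A^R$; if $\eta$ is only assumed supported in $A^R=\{R^{-1}\le q\le R\}$ one takes the slightly larger window $\{\frac{1}{2R}\le q\le 2R\}$ to be safe) — so the integral is $0$, hence $\phi\equiv 1$ and $\beta\equiv0$ on that ray. For the size estimate: since $|\eta(\tau,\tau+q,\omega)|\lesssim \e(1+\tau)^{-2}$, we get $\left|\int_t^{+\infty}\eta\,\d\tau\right|\lesssim \e\int_t^{+\infty}(1+\tau)^{-2}\,\d\tau\lesssim \e$, uniformly in $(t,q,\omega)$. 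Therefore for $\e$ small enough the exponent is bounded by, say, $1/2$ in absolute value, and $|\beta|=|e^{X}-1|\lesssim |X|\lesssim \e$ using $|e^X-1|\le 2|X|$ for $|X|\le 1$. Continuity of $\beta$ follows from continuity of $\eta$ and dominated convergence in the integral.

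The main (minor) obstacle is purely bookkeeping: one must make sure the decay $|\eta|\lesssim \e(1+s)^{-2}$ translates to integrability along the rays of $L$ — but on a ray $q$ is constant so $s=t+r=2\tau+q\gtrsim 1+\tau$ for $\tau$ large and $q$ bounded, giving exactly $(1+\tau)^{-2}$ decay, which is integrable; and one must confirm that integrating "from $+\infty$" is the right choice to get decay of $\beta$ (rather than integrating from $t=0$, which would only give boundedness, not smallness, and would not obviously vanish outside the $q$-window unless the data did). A small point worth stating explicitly is that the ray through $(t,x)$ with $r$ small is irrelevant since $\eta$ is supported away from the axis $\{r=0\}$ (as $q\ge \frac1{2R}>0$ forces $r\ge \frac1{2R}$ only when $t=0$, but along the evolution $r=t+q$ stays bounded below), so $L=\partial_t+\partial_r$ is a genuine smooth vector field on the support and no singularity at $r=0$ is encountered. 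I would then remark that this $\beta$ will be used to reduce the full equation $(L+\eta)f=g$ to a pure transport equation for $f/(1+\beta)$, which is handled in the next subsection.
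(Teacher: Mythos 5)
There is a genuine gap: the support property. Your $\beta$ is built without any cutoff, so it vanishes outside $\{\frac{1}{2R}\leq q\leq 2R\}$ only if $\eta$ itself vanishes on every ray with $q$ outside that window. But the hypotheses of this lemma (and of the reduced equation $(L+\eta)f=g$ it serves) only put a support assumption on $g$; for $\eta$ the sole assumption is the decay $|\eta|\lesssim \e(1+s)^{-2}$. Your parenthetical claim that ``$\eta$ inherits its support from $g$'' is false: in the application, the reduced equation is $(L-\mu)(rf)=rg$, so $\eta=-\mu$ with $\mu$ of the type $\tdr\ffi$, which merely decays like $(1+r)^{-2}$ and is certainly not supported in $A^R$. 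Consequently $\int_t^{+\infty}\eta(\tau,\tau+q,\omega)\,\d\tau$ need not vanish for $q$ outside $[\frac{1}{2R},2R]$, and your $\beta$ fails the support requirement of the statement. The paper's fix exploits the fact that the equation is only required \emph{in} $A^R$: it multiplies the ray integral by a cutoff $\hat{\chi}(t,r,\omega)=\chi(r-t,\omega)$, with $\chi$ supported in $\{\frac{1}{2R}\leq q\leq 2R\}$ and $\equiv 1$ on $A^R$, before exponentiating. Since $L$ annihilates functions of $(q,\omega)$, one still gets $L(\ln(1+\beta))=\eta$ wherever $\hat{\chi}\equiv 1$, i.e.\ in $A^R$, while the support property becomes automatic. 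Your argument would be repaired by inserting the same cutoff inside your exponential.

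Two smaller points. First, your stated reason for integrating from $t=+\infty$ rather than $t=0$ is mistaken: integrating from $t=0$ also gives smallness, since along a ray $s=2\tau+q\gtrsim 1+\tau$ on the relevant $q$-window, so $\bigl|\int_0^t\eta\,\d\tau\bigr|\lesssim\e\int_0^{\infty}(1+\tau)^{-2}\d\tau\lesssim\e$ uniformly in $t$; this is exactly what the paper does, and once the cutoff is in place either base point works equally well. Second, the smallness estimate and the verification $L(1+\beta)=(1+\beta)\eta$ in your write-up are fine and essentially identical in spirit to the paper's (solve the ODE along the rays of $L$ by an exponential of an integral of $\eta$); only the missing localization in $q$ is the real defect.
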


\begin{proof}
We simply define $\beta$ using polar coordinates
\begin{equation*}
\beta(t,r,\omega)\vcentcolon= \exp\left(\hat{\chi}(t,r,\omega)\int_0^t\eta(t',r-t+t',\omega)\d t' \right)-1.
\end{equation*}
Thanks to our assumption on $\eta$, $\beta$ satisfies the bound $|\beta|\lesssim\e$. Thus, if $\e$ is small enough, we can bound $1+\beta$ from below by $\frac{1}{2}$, and therefore $\ln(1+\beta)$ is well-defined and satisfies clearly $L(\ln(1+\beta))=\eta$ in $A^R$, which implies $L(1+\beta)=(1+\beta)\eta$.
\end{proof}

It is now easy to solve \eqref{transport reduit 2}:

\begin{lem}\label{transport reduit lemme}
Let $f_0:\R^3\to\R$ supported in $A_0$ and $g:\R_+\times\R^3\to\R$ supported in $A^R$. There exists a unique global solution $f$ of \eqref{transport reduit 2} with initial data $f_0$. This solution is supported in $A^R$ and satisfies 
\begin{equation*}
    |f|\lesssim \sup_{(t,x)\in\R_+\times\R^3}\left( |f_0(x)|+(1+|x|)^2|g(t,x)|\right).
\end{equation*}
\end{lem}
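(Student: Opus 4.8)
The plan is to turn \eqref{transport reduit 2} into a bare transport equation by dividing out the integrating factor $1+\beta$ furnished by the previous lemma, and then to integrate along characteristics.

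First I will introduce the new unknown $h\vcentcolon=(1+\beta)f$. Since $|\beta|\lesssim\e$ everywhere, for $\e$ small we have $\tfrac12\le 1+\beta$, so $f\mapsto h$ is a globally invertible change of unknown, with inverse $h\mapsto(1+\beta)^{-1}h$ bounded by $2$. Using the identity $L(1+\beta)=(1+\beta)\eta$ on $A^R$ and the Leibniz rule, $Lh=(1+\beta)(Lf+\eta f)$ on $A^R$, so \eqref{transport reduit 2} is equivalent there to
\begin{equation*}
Lh=\tilde g,\qquad \tilde g\vcentcolon=(1+\beta)g,\qquad h_{|t=0}=f_0,
\end{equation*}
the last identity because $\beta_{|t=0}=0$. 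Observe that $\tilde g$ is still supported in $A^R$: on $\mathrm{supp}\,g\subset A^R\subset\{\tfrac{1}{2R}\le q\le 2R\}$ the factor $1+\beta$ is smooth and bounded.

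Next I solve $Lh=\tilde g$ explicitly. The characteristics of $L=\dr_t+\dr_r$ are the curves $t'\mapsto(t',q+t',\omega)$ on which $q=r-t$ and the angle $\omega$ are constant and along which $L$ acts as $\tfrac{\d}{\d t'}$; integrating from $t'=0$ gives
\begin{equation*}
h(t,r,\omega)=f_0(r-t,\omega)+\int_0^t\tilde g\big(t',r-t+t',\omega\big)\,\d t'.
\end{equation*}
This defines a solution for all $t\ge0$, and it is the unique one with datum $f_0$ by ODE uniqueness along each characteristic. Since the quantity $q=r-t$ is constant along a characteristic and both $f_0$ and $\tilde g$ vanish unless $q\in[R^{-1},R]$, the function $h$ is supported in $A^R$; hence so is $f\vcentcolon=(1+\beta)^{-1}h$, which then solves \eqref{transport reduit 2} on $A^R$ (and trivially off it), and uniqueness of $f$ follows from the invertibility of the change of unknown. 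For the bound, on a characteristic meeting $\mathrm{supp}\,h$ one has $r-t+t'=q+t'\ge t'$, so combining $|1+\beta|\le2$, the pointwise inequality $|g(t',x')|\le(1+|x'|)^{-2}\sup_{(s,y)}\big((1+|y|)^2|g(s,y)|\big)$ and $\int_0^\infty(1+t')^{-2}\,\d t'=1$ yields
\begin{equation*}
|f(t,r,\omega)|\le 2|h(t,r,\omega)|\lesssim \sup_{(s,y)\in\R_+\times\R^3}\Big(|f_0(y)|+(1+|y|)^2|g(s,y)|\Big),
\end{equation*}
which is the asserted estimate.

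I do not expect a genuine obstacle here: once the integrating factor is divided out the equation is an elementary transport equation solved by the characteristic formula. The only points deserving attention are that $\e$ be small enough for $1+\beta\ge\tfrac12$ (so the change of unknown is legitimate globally), the bookkeeping of supports through the characteristic flow — where it is the conservation of $q$ along characteristics that confines $h$, and hence $f$, to $A^R$ — and the convergence of the time integral, for which the weight $(1+|x|)^{-2}$ built into the hypothesis on $g$ is exactly what is needed.
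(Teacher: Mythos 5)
Your proposal is correct and follows essentially the same route as the paper: the paper's explicit solution is exactly your $f=(1+\beta)^{-1}h$ with $h$ given by the characteristic formula (the factor $1+\beta(0,\cdot)$ appearing in the paper equals $1$ since $\beta_{|t=0}=0$), and the support and decay bookkeeping is identical, including the key observation that $q$ is conserved along characteristics and that $r$ and $t$ are comparable on $A^R$ so the time integral converges. The only cosmetic difference is uniqueness, which you get by ODE uniqueness along characteristics while the paper invokes an energy estimate for $L+\eta$; both are routine.
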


\begin{proof}
The uniqueness follows easily from an energy estimate related to the operator $L+\eta$. To construct a solution, we use polar coordinates and set
\begin{align*}
    f(t,r,\omega)\vcentcolon & = \frac{1}{1+\beta(t,r,\omega)}  \Bigg( (1+\beta(0,r-t,\omega))f_0(r-t,\omega) 
    \\&\qquad\qquad\qquad\qquad\qquad  +\int_0^t(1+\beta(s,r-t+s,\omega))g(s,r-t+s,\omega)\d s \Bigg).
\end{align*}
 By construction, we have $f_{|t=0}=f_0$ and $L((1+\beta)f)=(1+\beta)g$, and $f$ is supported in $A^R$. Let $(t,x)\in A^R$, since $L(1+\beta)=(1+\beta)\eta$ in $A^R$ we have:
\begin{equation*}
    (L+\eta)f(t,x)=\frac{1}{1+\beta(t,x)}L((1+\beta)f)(t,x)=g(t,x).
\end{equation*}
This proves the existence of a solution. The estimate on $|f|$ comes from the expression of $f$, the estimate $\frac{1}{2}\leq 1+\beta\leq 2$ and the support property of $g$ (note that in $A^R$, $r$ and $t$ are equivalent).
\end{proof}

\subsection{The transport operator $\Ll$} Using the result of the previous section, we now solve the equation
\begin{equation}\label{Ll}
\Ll f = f \mu +g
\end{equation}
where 
\begin{equation}\label{assumption}
(1+r)^2|Z^I\mu|+(1+r)^3|Z^Ig|\lesssim \e 
\end{equation}
for $|I|\leq M$, where $M$ is an integer. We consider initial data $f_0$ supported in $A_0$ and satisfying $\l f_0\r_{C^M}\lesssim \e$. Our strategy is to first deduce some \textit{a priori} estimates.
\par\leavevmode\par
We first take advantage of $[\Ll,\Omega]=0$:
\begin{lem}\label{induction 1 lemme}
If $f$ is a smooth and global in time solution of \eqref{Ll} initially supported in $A_0$, then
\begin{equation}\label{induction 1}
|\Omega^J f|\lesssim \frac{\e}{1+r} \qquad \text{for}\quad |J|\leq M.
\end{equation}
\end{lem}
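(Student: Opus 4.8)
The plan is a strong induction on $|J|$ that reduces each differentiated equation to the reduced transport equation \eqref{transport reduit 2}, to which Lemma \ref{transport reduit lemme} applies directly. Two structural observations drive the argument. First, since $\Ll=L+\frac1r$ with $L=\dr_t+\dr_r$, equation \eqref{Ll} is equivalent to $(L-\mu)(rf)=rg$; this disposes of the $\frac1r$ weight once and for all. Second, $[\Ll,\Omega_{jk}]=0$ \emph{exactly} (Proposition \ref{commutator}), so commuting with spatial rotations generates no error terms. One should also note that all functions involved are supported in $A^R$, on which $r\sim s$, so the hypothesis $|\eta|\lesssim\e(1+s)^{-2}$ required by Lemma \ref{transport reduit lemme} follows from \eqref{assumption} after multiplying $\mu$ by a cut-off equal to $1$ on $A^R$ and supported near it (this changes nothing since $rf$ and $rg$ are supported in $A^R$).

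For the base case $|J|=0$, I would apply Lemma \ref{transport reduit lemme} to $(L-\mu)(rf)=rg$ with $\eta=-\mu$. By uniqueness, $rf$ is supported in $A^R$, and
\[
|rf|\ \lesssim\ \sup_{(t,x)}\Big(|rf_0(x)|+(1+|x|)^2|rg(t,x)|\Big)\ \lesssim\ \e,
\]
where we used $\mathrm{supp}\,f_0\subseteq A_0$ with $\l f_0\r_{C^M}\lesssim\e$ and $(1+r)^3|g|\lesssim\e$ from \eqref{assumption}. Hence $|f|\lesssim\frac{\e}{1+r}$, which is \eqref{induction 1} for $|J|=0$.

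For the inductive step, assume \eqref{induction 1} for all $|J'|\leq j-1$, where $1\leq j\leq M$, and let $|J|=j$. Commuting \eqref{Ll} with $\Omega^J$ and applying the Leibniz rule yields an equation of the same type,
\[
\Ll(\Omega^Jf)=\mu\,\Omega^Jf+G_J,\qquad
G_J:=\sum_{\substack{J_1+J_2=J\\ |J_1|\leq j-1}}c_{J_1,J_2}\,(\Omega^{J_1}f)(\Omega^{J_2}\mu)+\Omega^Jg,
\]
with $G_J$ supported in $A^R$. Combining the induction hypothesis $|\Omega^{J_1}f|\lesssim\frac{\e}{1+r}$ with $(1+r)^2|Z^{J_2}\mu|\lesssim\e$ and $(1+r)^3|Z^Jg|\lesssim\e$ from \eqref{assumption} (both valid since $|J|\leq M$) and $\e\leq 1$, one gets $|G_J|\lesssim\frac{\e^2+\e}{(1+r)^3}\lesssim\frac{\e}{(1+r)^3}$, so that $(1+|x|)^2|rG_J|\lesssim\e$; moreover $\Omega^Jf_0$ is supported in $A_0$ with $|\Omega^Jf_0|\lesssim\l f_0\r_{C^M}\lesssim\e$. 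Applying the base-case reduction to $\Omega^Jf$ (with source $G_J$ in place of $g$), Lemma \ref{transport reduit lemme} gives $|r\,\Omega^Jf|\lesssim\e$, i.e. $|\Omega^Jf|\lesssim\frac{\e}{1+r}$, closing the induction.

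The argument is essentially bookkeeping, and I do not expect a genuine obstacle: the only points requiring care are the $\frac1r$ weight in $\Ll$, handled uniformly by passing to $rf$, and the fact that commuting with $\Omega_{jk}$ is error-free, which is precisely $[\Ll,\Omega_{jk}]=0$ from Proposition \ref{commutator}; the quadratic term $(\Omega^{J_1}f)(\Omega^{J_2}\mu)$ is harmless thanks to the smallness $\e^2\lesssim\e$.
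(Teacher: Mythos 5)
Your proof is correct and follows essentially the same route as the paper: strong induction on $|J|$, rewriting \eqref{Ll} as $(L-\mu)(rf)=rg$, exploiting $[\Ll,\Omega]=0$ so that commuting produces only Leibniz terms, and closing each step with Lemma \ref{transport reduit lemme} via \eqref{assumption}. Your extra remark about cutting off $\mu$ to trade the $(1+r)^{-2}$ decay for the $(1+s)^{-2}$ decay required by the reduced transport lemma is just a more explicit version of the paper's standing convention that $r$, $t$ and $s$ are equivalent on $A^R$, where all the relevant functions are supported.
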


\begin{proof}
We prove this lemma by strong induction on the value of $|J|$. If $|J|=0$, we just rewrite \eqref{Ll} as
\begin{equation*}
(L-\mu)(rf)=rg
\end{equation*}
and apply Lemma \ref{transport reduit lemme} using \eqref{assumption}. Assume now that the estimate \eqref{induction 1} holds for all value of $|J|\leq k$ with $k\leq M-1$ and let $J$ be a multi-index such that $|J|=k+1$. Since $[\Ll,\Omega]=0$, the equation for $\Omega^J f$ is 
\begin{equation*}
\Ll \Omega^J f = \Omega^J f \mu + \sum_{\substack{J_1+J_2=J\\|J_1|\leq k}}\Omega^{J_1}f \Omega^{J_2}\mu + \Omega^J g
\end{equation*}
which rewrites
\begin{equation*}
(L-\mu)(r\Omega^J f)= \sum_{\substack{J_1+J_2=J\\|J_1|\leq k}}r\Omega^{J_1}f \Omega^{J_2}\mu + r\Omega^J g.
\end{equation*}
Because of the induction hypothesis and of \eqref{assumption}, the RHS of this equation is bounded by $\frac{\e}{(1+r)^2}$. Thus, we can apply Lemma \ref{transport reduit lemme} and conclude the proof.
\end{proof}

Note that in the previous lemma, in order to apply Lemma \ref{transport reduit lemme}, we have to prove that the initial data for $r\Omega^Jf$ are bounded. Since the vector fields $\Omega$ involve only spatial derivations, this boundedness property follows from $f$ being supported in $A_0$ and smooth. In the next lemma however, we fully use Proposition \ref{commutator} and estimate $\Omega^J Z^I f$, again with the help of Lemma \ref{transport reduit lemme}. This requires to bound the initial data for $r\Omega^J Z^I f$, which may involve some time derivative. To find $\dr^k_tf_{|t=0}$, we simply use the equation \eqref{Ll} and $[\Ll,\dr_t]=0$ to derive the equation for $\dr^{k-1}_tf$ which gives 
\begin{equation*}
\dr^k_t f=-\dr\dr_t^{k-1} f -\frac{\dr_t^{k-1} f}{r}+\sum_{i+j=k-1}\dr^i_tf\dr^j_t\mu+\dr^{k-1}_tg.
\end{equation*}
Therefore we obtain $\dr^k_tf_{|t=0}$ by induction on $k$, and check easily their initial boundedness. With that in mind, we can estimate $\Omega^J  Z^I f$:

\begin{lem}\label{induction 2 lemme}
If $f$ is a smooth and global in time solution of \eqref{Ll} initially supported in $A_0$, then
\begin{equation}\label{induction 2}
|\Omega^J  Z^I f|\lesssim \frac{\e}{1+r} \qquad \text{for}\quad |I|+|J|\leq M.
\end{equation}
\end{lem}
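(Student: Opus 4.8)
The plan is to prove \eqref{induction 2} by a primary strong induction on the \emph{general order} $|I|$ --- the case $|I|=0$ being exactly Lemma~\ref{induction 1 lemme}, which also records that $f$ is supported in $A^R$ --- and, for each fixed value $|I|=m\geq 1$, by a secondary induction on the \emph{angular order} $|J|$. The analytic engine throughout is Lemma~\ref{transport reduit lemme}: for $h\vcentcolon=r\,\Omega^J Z^I f$ we will exhibit a reduced transport equation $(L-\mu)h=rG$, with $L=\dr_t+\dr_r$, in which $G$ is supported in $A^R$ and satisfies $|G|\lesssim\e(1+r)^{-3}$ there. Since $r\sim s$ on $A^R$ (where $h$ is supported, by Lemma~\ref{induction 1 lemme}), the hypothesis \eqref{assumption} makes $\eta\vcentcolon=-\mu$ an admissible coefficient for Lemma~\ref{transport reduit lemme} exactly as in the base case; applying that lemma with source $rG$ then yields $|h|\lesssim\e$, i.e. $|\Omega^J Z^I f|\lesssim\e(1+r)^{-1}$, which closes both inductions.

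To set up the equation, apply $\Omega^J Z^I$ to \eqref{Ll}. Because $[\Ll,\Omega]=0$, the only commutator produced is $\Omega^J[\Ll,Z^I]f$, and Proposition~\ref{commutator} writes $[\Ll,Z^I]$ as a sum of terms of the three types $\frac{a^{I,\ell}_K}{r}\tdr_\ell Z^K$, $b^I_K Z^K\Ll$ and $\frac{c^I_K}{r^2}Z^K$ with $|K|\leq m-1$ and with coefficients all of whose $Z$-derivatives are bounded on $A^R$. Using the elementary identity $L(ru)=r\,\Ll u$ and separating from $\Omega^J Z^I(\mu f)$ its top-order piece $\mu\,\Omega^J Z^I f$, one obtains $(L-\mu)(r\,\Omega^J Z^I f)=rG$ with
\[
G=\Omega^J[\Ll,Z^I]f+\Big(\Omega^J Z^I(\mu f)-\mu\,\Omega^J Z^I f\Big)+\Omega^J Z^I g .
\]

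The bound $|G|\lesssim\e(1+r)^{-3}$ on $A^R$ is then a term-by-term verification. Every summand of the middle bracket is a product $(\Omega^{J_2}Z^{I_2}\mu)(\Omega^{J_1}Z^{I_1}f)$ carrying at least one derivative on $\mu$: the first factor is $\lesssim\e(1+r)^{-2}$ by \eqref{assumption}, while the second is controlled by the primary hypothesis when $|I_1|<m$ and, when $|I_1|=m$ (forcing $I_2=0$, $J_2\neq 0$, $|J_1|<|J|$), by the secondary hypothesis. The term $\Omega^J Z^I g$ is $\lesssim\e(1+r)^{-3}$ directly by \eqref{assumption}. In $\Omega^J[\Ll,Z^I]f$ the only delicate contribution is $\frac{a^{I,\ell}_K}{r}\tdr_\ell Z^K f$, which is of general order only $|K|\leq m-1$ but carries a derivative not yet controlled; the device is to rewrite the good derivative as $\tdr_\ell=-\frac1r\sum_j\omega_j\,\Omega_{\ell j}$, so that this term becomes $-\frac{a^{I,\ell}_K}{r^2}\,\omega_j\,\Omega_{\ell j}Z^K f$, in which $\Omega_{\ell j}Z^K f$ still has general order $|K|\leq m-1$. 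Distributing $\Omega^J$ and using $|\Omega^{J'}(r^{-2})|\lesssim r^{-2}$, $|\Omega^{J'}\omega_j|\lesssim 1$ and the boundedness of the $Z$-derivatives of $a^{I,\ell}_K$, the primary hypothesis bounds it by $\e\,r^{-2}(1+r)^{-1}$. The term $b^I_K Z^K\Ll f=b^I_K Z^K(\mu f+g)$ is expanded by Leibniz and estimated just like the bracket and $\Omega^J Z^I g$ (only the primary hypothesis is needed since $|K|\leq m-1$), and $\frac{c^I_K}{r^2}Z^K f$ is immediate. As all of these are supported in $A^R$, where $r\gtrsim 1$, this gives $|rG|\lesssim\e(1+r)^{-2}$.

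Finally, to invoke Lemma~\ref{transport reduit lemme} one must know that the Cauchy data $r\,\Omega^J Z^I f|_{t=0}$ are bounded. This follows from the equation itself: $\Ll f=\mu f+g$ together with $[\Ll,\dr_t]=0$ expresses each $\dr_t^k f|_{t=0}$ as a universal polynomial in spatial derivatives of $f_0$, $\mu|_{t=0}$ and $g|_{t=0}$, all bounded by $\l f_0\r_{C^M}\lesssim\e$ and \eqref{assumption}, and $f_0$ is smooth and compactly supported. I expect the main obstacle to be precisely the commutator analysis of the previous paragraph: one must organise the bookkeeping so that every term coming from $\Omega^J[\Ll,Z^I]f$ (and from the Leibniz expansion of $b^I_K Z^K\Ll f$) is either of strictly smaller general order than $m$, or of general order $m$ with strictly smaller angular order than $|J|$, so that only already-proved instances of the estimate are used; the rewriting $\tdr_\ell=-\frac1r\omega_j\Omega_{\ell j}$ together with the shape of Proposition~\ref{commutator} is exactly what makes this possible.
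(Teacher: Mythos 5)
Your proof is correct and follows essentially the same route as the paper: apply $\Omega^JZ^I$ to \eqref{Ll}, use $[\Ll,\Omega]=0$ and Proposition \ref{commutator} to write a reduced transport equation $(L-\mu)(r\,\Omega^JZ^If)=rG$, bound the source via \eqref{assumption}, the induction hypothesis and the conversion $\tdr_\ell=-\frac{1}{r}\omega_j\Omega_{\ell j}$ (so that the dangerous commutator term only costs one extra rotation at strictly lower $Z$-order), check boundedness of the initial data through the equation, and conclude with Lemma \ref{transport reduit lemme}. Your secondary induction on $|J|$ is simply a more careful bookkeeping of the same argument: it is exactly what covers the Leibniz terms $(\Omega^{J_2}\mu)(\Omega^{J_1}Z^{I}f)$ with $J_2\neq 0$ and $I_1=I$, which the paper's single induction on $|I|$ (and its displayed constraint $|I_1|\leq k$ in the analogue of your formula for $G$) glosses over, so this refinement is welcome but does not change the proof's substance.
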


\begin{proof}
We prove this lemma by strong induction on the value of $|I|$. The case $|I|=0$ is a consequence of Lemma \ref{induction 1 lemme}. Assume now that the estimate \eqref{induction 2} holds for all value of $|I'|\leq k$ with $k\leq M-1$ and all $J'$ such that $|I'|+|J'|\leq M$. Let $I$ be a multi-index such that $|I|=k+1$ and let $J$ be a multi-index such that $|I|+|J|\leq M$. Formally, the equation on $\Omega^J  Z^I f$ is 
\begin{equation}\label{induction step}
(L-\mu)(r\Omega^JZ^If) = \sum_{\substack{I_1+I_2=I\\J_1+J_2=J\\|I_1|\leq k\\|J_1|\leq |J|-1 }}r\Omega^{J_1}Z^{I_1} f \Omega^{J_2} Z^{I_2}\mu +r\Omega^JZ^Ig + r[\Ll,\Omega^JZ^I]f .
\end{equation}
Using the induction hypothesis and \eqref{assumption}, the first two terms in the RHS of \eqref{induction step} are bounded by $\frac{\e}{(1+r)^2}$. It remains to estimate the commutator. Using $[\Ll,\Omega]=0$ and Proposition \ref{commutator} we have:
\begin{align}\
|r[\Ll,\Omega^JZ^I]f| & \lesssim \left|r\Omega^J\sum_{|I'|\leq|I|-1}\left( \frac{a^{I,\ell}_{I'}}{r}\tdr_{\ell}Z^{I'}f+b^I_{I'}Z^{I'}\Ll f+\frac{c^I_{I'}}{r^2}Z^{I'}f\right)\right|\nonumber
\\& \lesssim \sum_{|I'|\leq|I|-1}  \left|\Omega^J \tdr Z^{I'} f\right|+\sum_{|I'|\leq|I|-1}  r\left|\Omega^J Z^{I'}\Ll f\right|+\sum_{|I'|\leq|I|-1}  \frac{1}{r}\left|\Omega^JZ^{I'}f\right| \label{blablabla}
\end{align}
Since $|I'|\leq |I|-1$ we can use the induction hypothesis to obtain
\begin{equation*}
\sum_{|I'|\leq|I|-1}\frac{1}{r}\left|\Omega^JZ^{I'}f\right|\lesssim \frac{\e}{(1+r)^2}.
\end{equation*}
To estimate the terms involving $\Ll f$, we simply use \eqref{Ll}, the induction hypothesis and \eqref{assumption}:
\begin{align*}
\sum_{|I'|\leq|I|-1}r\left|\Omega^J Z^{I'}\Ll f\right| & \lesssim  \sum_{\substack{|I'|\leq |I|-1\\I_1+I_2=I'\\J_1+J_2=J}}r|\Omega^{J_1}Z^{I_1} f \Omega^{J_2} Z^{I_2}\mu| +r|\Omega^JZ^Ig| \lesssim \frac{\e}{(1+r)^2}.
\end{align*}
It remains to estimate the first terms in \eqref{blablabla}. Using $[\Omega,\tdr]\sim\tdr$ and $\tdr_\ell=\frac{\omega^k}{r}\Omega_{\ell k}$ we get 
\begin{align*}
\sum_{|I'|\leq|I|-1}  \left|\Omega^J \tdr Z^{I'} f\right| \lesssim \sum_{\substack{|I'|\leq|I|-1\\ |J'|\leq |J|+1}}\frac{1}{r}\left| \Omega^{J'}Z^{I'}f \right|
\end{align*}
In this last sum, note that $|I'|\leq |I|-1$ and $|I'|+|J'|\leq |I|-1+|J|+1\leq M$. Thus we can use our induction hypothesis to bound this by $\frac{\e}{(1+r)^2}$. In conclusion, the RHS of \eqref{induction step} is bounded by $\frac{\e}{(1+r)^2}$, and we can apply Lemma \ref{transport reduit lemme} and conclude the proof.
\end{proof}

In the previous lemma, the case $J=0$ gives that a global smooth solution of \eqref{Ll} satisfies
\begin{equation*}
| Z^I f|\lesssim \frac{\e}{1+r} \qquad \text{for}\quad |I|\leq M.
\end{equation*}
This \textit{a priori} estimates allows us easily to prove the existence and uniqueness of a global solution to \eqref{Ll}. We summarize this discussion in the following corollary:

\begin{coro}
Let $f_0$ supported in $A_0$ and such that and let $\mu$ and $g$ satisfy \eqref{assumption}. There exists a unique global solution $f\in C^M$ to \eqref{Ll} with initial data $f_0$. Moreover, $f$ is supported in $A^R$ and satisfies
\begin{equation*}
| Z^I f|\lesssim \frac{\e}{1+r} \qquad \text{for}\quad |I|\leq M.
\end{equation*}
\end{coro}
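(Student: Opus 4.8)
The plan is to reduce \eqref{Ll} to the reduced transport equation \eqref{transport reduit 2}---for which existence, uniqueness, support and a sup-bound were already established in Lemma~\ref{transport reduit lemme}---and then to read off the pointwise estimates for all $|I|\leq M$ from the \emph{a priori} bounds of Lemma~\ref{induction 2 lemme}. Setting $L=\dr_t+\dr_r$, one first records the algebraic identity $\Ll f=\tfrac{1}{r}L(rf)$, so that \eqref{Ll} is equivalent to
\[
(L-\mu)(rf)=rg,
\]
which is precisely \eqref{transport reduit 2} with $\eta=-\mu$ and source $rg$. I would then check the hypotheses of Lemma~\ref{transport reduit lemme}: $rg$ is supported in $A^R$, $rf_0$ is supported in $A_0$ with $\l rf_0\r_{C^M}\lesssim\e$ (since $r$ is bounded and smooth on a neighbourhood of $\mathrm{supp}\,f_0\subset A_0$), and $\eta=-\mu$ has the required decay: \eqref{assumption} with $|I|=0$ gives $|\mu|\lesssim\e(1+r)^{-2}$, and since on $A^R$---and more generally wherever $q$ stays bounded, in particular on the support of the cutoff $\hat\chi$ used to build $\beta$---the quantities $r$ and $s$ are comparable, this is $\lesssim\e(1+s)^{-2}$ there; as any solution is supported in $A^R$ one may if needed replace $\mu$ by $\hat\chi\mu$ without altering the equation on $\mathrm{supp}\,f$.

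Granting this, Lemma~\ref{transport reduit lemme} produces a unique global solution $\phi:=rf$ of $(L-\mu)\phi=rg$ with datum $rf_0$, supported in $A^R$ and obeying $|\phi|\lesssim\sup\big(|rf_0|+(1+|x|)^2|rg|\big)\lesssim\e$, using $\l f_0\r_{C^0}\lesssim\e$ and $(1+r)^3|g|\lesssim\e$. Its $C^M$ regularity follows from the explicit formula for $\phi$ in the proof of Lemma~\ref{transport reduit lemme}, together with the facts that $\beta$ (an exponential of an integral of $\eta$) is $C^M$ and bounded away from $-1$, that $f_0,g\in C^M$, and that everything lives in $\{r\geq R^{-1}\}$, where the passage between polar and Cartesian coordinates is a smooth diffeomorphism. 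Dividing by $r$ (bounded below on the support) yields $f=\phi/r\in C^M$, the unique global solution of \eqref{Ll} with datum $f_0$, supported in $A^R$, with $|f|\lesssim\e(1+r)^{-1}$. Alternatively, since \eqref{Ll} is a linear first-order equation whose characteristics preserve $q=r-t$ and extend to all $t\geq 0$ (the coefficient $r^{-1}$ being harmless once one knows the solution avoids $r=0$), classical transport theory gives local-in-time, hence global, existence and uniqueness directly, with finite speed of propagation supplying the support property.

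Finally, $f$ being a $C^M$, global-in-time solution of \eqref{Ll} initially supported in $A_0$, I would invoke Lemma~\ref{induction 2 lemme} with $J=0$---its proof, based on Proposition~\ref{commutator} and Lemma~\ref{transport reduit lemme}, differentiates $f$ at most $M$ times, so $C^M$ regularity suffices---to obtain
\[
|Z^I f|\lesssim\frac{\e}{1+r}\qquad\text{for }|I|\leq M,
\]
which completes the proof. I do not expect a genuine obstacle: the analytic substance has already been spent in Lemma~\ref{transport reduit lemme} and in the commutator bookkeeping of Lemmas~\ref{induction 1 lemme}--\ref{induction 2 lemme}; the only mildly delicate points are reconciling the $(1+s)^{-2}$ hypothesis on $\eta$ in Lemma~\ref{transport reduit lemme} with the $(1+r)^{-2}$ decay in \eqref{assumption} (done via the comparability of $r$ and $s$ on the $q$-bounded region carrying the cutoff) and propagating the $C^M$ regularity through the explicit solution formula.
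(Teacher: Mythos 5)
Your proposal is correct and follows essentially the same route as the paper: rewrite \eqref{Ll} as $(L-\mu)(rf)=rg$, obtain existence, uniqueness and the support property from Lemma \ref{transport reduit lemme}, and then read off the estimates $|Z^If|\lesssim\e(1+r)^{-1}$ from Lemma \ref{induction 2 lemme} with $J=0$. You even make explicit two points the paper leaves implicit (the comparability of $r$ and $s$ on the $q$-bounded region so that the $(1+s)^{-2}$ hypothesis on $\eta$ is met, and the fact that $C^M$ regularity suffices for the induction lemmas stated for smooth solutions), so no gap remains.
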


\nocite{*}
\bibliographystyle{plain}

\end{document}